\theoremstyle{plain}
\newtheorem{thm}{Theorem}[section]
\newtheorem*{thm*}{Theorem}
\newtheorem{cor}[thm]{Corollary}
\newtheorem{lem}[thm]{Lemma}
\newtheorem{claim}[thm]{Claim}
\newtheorem{prop}[thm]{Proposition}
\newtheorem{ques}[thm]{Question}
\newtheorem{prob}[thm]{Problem}
\newtheorem{conj}[thm]{Conjecture}
\theoremstyle{definition}
\newtheorem{defn}[thm]{Definition}
\theoremstyle{remark}
\newtheorem{rem}[thm]{Remark}
\newtheorem{rmk}[equation]{Remark}
\newtheorem{eg}[equation]{Example}
\newenvironment{subproof}[1][\proofname]{%
  \begin{proof}[#1]%
}{%
  \end{proof}%
}
\newcommand{\Z}{\mathbb{Z}}
\newcommand{\C}{\mathbb{C}}
\newcommand{\N}{\mathbb{N}}
\newcommand{\ngroup}{\trianglelefteq}
\renewcommand{\phi}{\varphi}
\renewcommand{\emptyset}{\varnothing}
\renewcommand{\tilde}[1]{\widetilde{#1}}
\def\Ddots{\mathinner{\mkern1mu\raise\p@
\vbox{\kern7\p@\hbox{.}}\mkern2mu
\raise4\p@\hbox{.}\mkern2mu\raise7\p@\hbox{.}\mkern1mu}}
\numberwithin{equation}{section}
\newcommand{\vardbtilde}[1]{\tilde{\raisebox{0pt}[0.85\height]{$\tilde{#1}$}}}
\newcommand{\Ast}{\mathop{\scalebox{1.5}{\raisebox{-0.2ex}{$\ast$}}}}
\newcommand{\Al}{\{ A_\lambda\}_{\lambda \in \Lambda}}
\newcommand{\NN}{\mathbb{N}}
\newcommand{\ZZ}{\mathbb{Z}}
\newcommand{\e}{\varepsilon}
\renewcommand{\ll }{\left\langle\hspace{-.7mm}\left\langle }
\newcommand{\rr }{\right\rangle\hspace{-.7mm}\right\rangle }
\newcommand{\Hl}{\{ H_\lambda\}_{\lambda \in \Lambda}}
\newcommand{\WR}{\mathcal{WR}}
\newcommand{\Ker}{\operatorname{Ker}}
\newcommand{\Aut}{\operatorname{Aut}}
\newcommand{\Inn}{\operatorname{Inn}}
\newcommand{\Out}{\operatorname{Out}}
\newcommand{\cM}{\mathcal{M}}
\newcommand{\cL}{\mathcal{L}}
\newcommand{\cN}{\mathcal{N}}
\newcommand{\cQ}{\mathcal{Q}}
\newcommand{\ra}{\rightarrow}
\theoremstyle{plain} 
\newtheorem*{genericthm*}{\thistheoremname}
\newcommand{\thistheoremname}{???}
\newcounter{genericthm}
\newenvironment{namedthm*}[1]
  {\renewcommand{\thistheoremname}{#1}%
   \refstepcounter{genericthm}%
   \begin{genericthm*}}
  {\end{genericthm*}}
\title{W$^*$-superrigidity for Property (T) Groups with Infinite Center}
\author{Ionu\c t Chifan, Adriana Fernández Quero, Denis Osin, and Hui Tan}
\begin{document}

\begin{abstract} 
We propose to study a natural version of Connes' Rigidity Conjecture that involves property (T) groups with infinite center. Utilizing techniques at the intersection of von Neumann algebras and geometric group theory, we establish several cases where this conjecture holds. In particular, we provide the first example of a W$^*$-superrigid property (T) group with infinite center. In the course of proving our main results, we also generalize the main W$^*$-superrigidity result from \cite{cios22} to twisted group factors.
\end{abstract}

\maketitle

\vspace{-7mm}

\section{Introduction}

\noindent To any countable group $G$, one can associate its von Neumann algebra $\mathcal{L}(G)$ \cite{mvn43}, defined as the weak operator closure of the complex group algebra $\mathbb{C}[G]$, acting by left convolution on the Hilbert space $\ell^2G$ of square-summable functions on $G$. A major research theme in the field of von Neumann algebras, which has garnered considerable attention over the years, is understanding the extent to which $\mathcal{L}(G)$ retains algebraic information about the underlying group $G$. 

Recall that a group $G$ is said to have the \textit{ICC property} if the conjugacy class of every non-trivial element of $G$ is infinite. In \cite{connes80F} A. Connes discovered that II$_1$ factors associated with ICC Kazhdan property (T) groups exhibit strong rigidity under small perturbations; in particular, he showed that both the fundamental group and the outer automorphism group of every such II$_1$-factor is countable. These results and their fairly conceptual proofs motivated Connes to conjecture that \emph{if $G$,$H$ are ICC property (T) groups with $\mathcal L(G)\cong \mathcal L(H)$, then $G\cong H$}, \cite{connes80T}. As property (T) for groups is a von Neumann algebra invariant by \cite{connesjones}, his conjecture can be rephrased as
 
 \begin{namedthm*}{Connes' Rigidity Conjecture}
 \label{CRC} Any ICC property (T) group $G$  is W$^*$-superrigid; that is, whenever $H$ is an arbitrary group such that $\mathcal L(G)\cong \mathcal L(H)$ then $G\cong H$.
 \end{namedthm*}



Over the last two decades, remarkable progress in identifying classes of W$^*$-superrigid groups has been made using Popa's deformation/rigidity theory; see, for instance, \cite{popa06, vaes10, io18, ipv10, bv13, ci18, cdad20, cios22, DV24}, just to name a few. For instance, in \cite{cios22} were unveiled the first examples of groups satisfying Connes Rigidity Conjecture. However, almost all existing rigidity results focus on the case when $\mathcal{L}(G)$ has trivial center, which corresponds to $G$ being an ICC group.

The first W$^*$-rigidity results for von Neumann algebras $\mathcal L(G)$ groups $G$ with infinite center were obtained in \cite{cfqt23}, focusing on direct products $G = A \times K$, where $A$ is an infinite abelian group and $K$ is an ICC property (T) wreath-like product group as in \cite{cios22}. Such product groups cannot be W$^*$-superrigid as their center cannot be reconstructed from their von Neumann algebra. However, in \cite{cfqt23} it was shown that this is the only obstruction to their W$^*$-superrigidity: \emph{any group $H$ with $\mathcal{L}(G) \cong \mathcal{L}(H)$ must be a direct product of the form $H \cong B \times K$, where $B$ is an infinite abelian group}.

The main goal of this paper is to construct examples of property (T) groups with infinite center that are W$^*$-superrigid. Our approach builds on methods from \cite{cfqt23} and uses a generalization of the notion of wreath-like products of groups introduced in \cite{cios22}. These, along with the non-property (T) examples involving left-right wreath product groups obtained by Donvil and Vaes in parallel, independent work \cite{DV24center}, constitute the first known W$^*$-superrigid groups with an infinite center.


\vspace{2mm}

\noindent\textbf{Acknowledgments.} The authors thank Professors Adrian Ioana and Stefaan Vaes for their helpful comments and suggestions, which improved the exposition of this article. They are also grateful to the anonymous referee for valuable comments and suggestions, which greatly improved the exposition and overall quality of the paper.

\noindent Ionu\c t Chifan was supported by the NSF grants DMS-2154637 and FRG-DMS-1854194. Adriana Fernández Quero received support from the NSF grant DMS-2154637, the CLAS Dissertation Writing Fellowship and the Erwin and Peggy Kleinfeld Graduate Fellowship. Denis Osin was supported by the NSF grant DMS-2405032 and the Simons Fellowship in Mathematics MP-SFM-00005907. Hui Tan was supported in part by the NSF Grants DMS-1854074 and DMS-2153805.

\section{Main results}

\noindent Recall that two groups $G$ and $H$ are \emph{virtually isomorphic}  (denoted $G\cong_v H$) if one can find finite normal subgroups $F \ngroup G$ and $K\ngroup H$ such that in the corresponding quotients there are finite index subgroups $G_0\leqslant G/F$ and $H_0 \leqslant H/K$ such that $G_0$ is isomorphic to $H_0$. To properly state our results, we introduce the following versions of W$^*$-superrigidity.

\begin{defn}
We say that a group $G$ is \textit{virtually W$^*$-superrigid} if, for any group $H$ satisfying $\cL(G)\cong \cL(H)$, we have $G\cong_v H$. Further, we say that $G$ is \textit{strongly W$^*$-superrigid} if, for any group $H$ and any $\ast$-isomorphism  $\Psi\colon \cL(G)\ra \cL(H)$, there exist an isomorphism $\delta\colon G\to H$ and a unitary $w\in \cL(H)$ such that 

$$\Psi(u_g)= w v_{\delta(g)} w^*, \;\;\; \forall \, g\in G,$$

\noindent where $(u_g)_{g\in G}\subset \mathcal L(G)$ and  $(v_h)_{h\in H}\subset \mathcal L(H)$ are the canonical group unitaries. 

We note in passing that strongly W$^*$-superrigid groups $G$ automatically satisfy ${\rm Char}(G)=1$.

\end{defn}
For given groups $A$ and $B$ and an action $B\curvearrowright I$ on a set $I$,  we denote by $\WR(A,B\curvearrowright I)$ the class of \textit{wreath-like products of $A$ and $B$ corresponding to the action $B\curvearrowright I$} introduced in \cite{cios22}. Recall $W\in \WR(A,B\curvearrowright I)$ if there exists a short exact sequence  

$$1 \ra \oplus_{i\in I} A_i \hookrightarrow W \overset{\e}{\twoheadrightarrow} B\ra 1$$ 

\noindent such that $A_i\cong A$ for all $i\in I$ and $gA_i g^{-1} = A_{\e(g)\cdot i}$ for every $g\in W$, where $A_i$ is the $i$-th copy of $A$ in $\oplus_{i\in I} A_i=A^{(I)}$. We call the subgroup $A^{(I)}$ the \textit{base} of the wreath-like product $W$. When $I=B$, this class is denoted simply by $\WR(A,B)$ and its elements are called regular wreath-like products of $A$ and $B$. 

Our first result is the following.

\begin{namedthm*}{Theorem A}\label{TheoremA} Let $A$ be a nontrivial free abelian group, $B$ a nontrivial ICC subgroup of a hyperbolic group, $B\curvearrowright I$ an action on a countable set $I$ with amenable stabilizers. Let $G$ be a property (T) group with infinite center such that $1\to Z(G)\hookrightarrow G\twoheadrightarrow^{\pi} W\to 1$ is a central extension with $W\in \mathcal{WR}(A,B\curvearrowright I)$. Assume the extension splits over the base $A^{(I)}< W$. Then, $G$ is virtually W$^*$-superrigid.
\end{namedthm*}

In the above statement, saying that the short exact sequence splits over the base $A^{(I)}\leqslant W$ means that $\pi^{-1}(A^{(I)})=Z(G)\times A^{(I)}$, where $A^{(I)}$ is the base of the wreath-like product $W$.

We note that the virtual  W$^*$-superrigidity in the conclusion of \ref{TheoremA} cannot be promoted to the genuine W$^*$-superrigidity.  Indeed, let $Q$ be any infinite property (T) group and let $A$ and $B$ be any nonisomorphic finite abelian groups of the same order (e.g., we can take $A= \mathbb Z_{nm}$ and $B= \mathbb Z_n \times \mathbb Z_m$ for some positive integers $n,m\geq 2$ which are not co-prime). Consider $G= A \times Q$ and $H = B \times Q$. Since $G$ and $H$ are finite index extensions of $Q$ and the latter has property (T), it follows that $G$ and $H$ have property (T). Clearly, we have $G\ncong H$. However,

\begin{equation*}
    \cL(G)\cong \cL(A)\:\overline \otimes\: \cL(Q) \cong \mathbb C^{|A|}\:\overline \otimes\: \mathcal L(Q)\cong \mathbb C^{|B|}\:\overline \otimes\: \mathcal L(Q)\cong \cL( B)\:\overline \otimes \:\cL(Q)\cong \cL(H).
\end{equation*}

\noindent We note in passing that, as it will be shown in the proofs of our subsequent results, the lack of $\text{W}^*$-superrigidity for the groups $G$ in \ref{TheoremA} does not necessarily stem from the presence of torsion elements in $Z(G)$, as previous counterexamples might suggest. Rather, it arises from the fact that the image of the natural $2$-cocycle of $G$ fails to generate the entire center $Z(G)$. In the next W$^*$-superrigidity results, this does not happen because the groups involved have trivial abelianization.

As with the majority of previous rigidity results for group factors \cite{ipv10, bv13, ci18, cdad20, cios22, DV24}, it is desirable to provide a complete description of the $\ast$-isomorphism between $\mathcal{L}(G) \cong \mathcal{L}(H)$ in terms of the virtual isomorphism between the underlying groups, $G \cong_v H$, along with other relevant data about these groups, such as their multiplicative characters, or more generally, their finite-dimensional representations, inductions from their finite-index subgroups, etc. In Theorem \ref{mainresult}, we provide such a characterization, although it is somewhat too technical to present fully here. However, when $G$ satisfies certain natural additional conditions, the statement can be made more precise.

\vspace{2mm}

\begin{namedthm*}{Theorem B}\label{theoremB} Let $W\in\mathcal{WR}(A,B\curvearrowright I)$ and  $1\to Z(G)\hookrightarrow G\twoheadrightarrow W\to 1$ be groups as in the statement of \ref{TheoremA}. Assume in addition that $B$ is hyperbolic, and $G$ is torsion free and has trivial abelianization. Let $H$ be any group and let $\Theta\colon \cL(G)\to \cL(H)$ be any $*$-isomorphism. 

Then, we can find a finite family of projections $\{p_1,\ldots,p_n\}\subset \cL(Z(H))$ with $\sum_{i=1}^np_i=1$, finite subgroups $B_{i}<Z(H)$ and group monomorphisms $\delta_{i}\colon G\to H/B_{i}$ with finite index image, for each $1\leq i\leq n$, and a unitary $w\in \cL(H)$ such that 

\begin{equation*}
    \Theta(u_g)=w\left(\sum_{i=1}^nv_{\delta_i(g)}p_i\right)w^*,\text{ for all }g\in G.
\end{equation*}

\noindent Moreover, $\text{Im}(\delta_i)=[H,H]B_i/B_i$ and $[H,H]\cap B_i=[H,H]\cap B_j$ for all $1\leq i,j\leq n$.
\end{namedthm*}

Note that since $B_i\leqslant Z(H)$ and $p_i\in\mathscr Z(\cL(H))$ the product $v_{\delta_i(g)}p_i$ is independent of the choice of representative $\delta_i(g)\in H/B_i$, and therefore, it is a well-defined element of $\cL(H)$.

Our new quotienting technique for generalized wreath-like products (Theorem \ref{Thm:CenExt}), combined with methods from geometric group theory \cite{Osin07, DGO11, Sun, cios22b}, yields the following generalization of \cite[Theorem 2.7]{cios22b} to central extensions. This, in turn, provides many examples of property (T) groups $G$ that satisfy \ref{theoremB}.

\begin{namedthm*}{Theorem C}\label{TheoremD} 
For any finitely generated abelian groups $A$ and $C$, there is a group $G$ satisfying the following conditions:

\begin{enumerate}
    \item[(a)] $G$ is a central extension of the form $1\to C\to G\to^{\pi} W\to 1$, where $W\in \WR(A, B)$ for some non-elementary, torsion-free, hyperbolic group $B$; 
    \item[(b)] $\pi^{-1}(A^{(I)})=C\times A^{(I)}$, where $A^{(I)}\leqslant W$ is the base; and
    \item[(c)] $G$ has property (T) and trivial abelianization.
\end{enumerate}
\end{namedthm*}

We note that if one has a good control over the number of outer automorphisms of the group $G$ from \ref{theoremB}, then the conclusion of the theorem can be significantly strengthened. For example, we obtain the following.

\begin{namedthm*}{Corollary D}\label{TheoremC}  Let $W\in\mathcal{WR}(A,B\curvearrowright I)$ and $1\to Z(G)\hookrightarrow G\twoheadrightarrow W\to 1$ be groups  as in the statement of \ref{TheoremA}. Additionally, assume that $G$ has trivial abelianization and ${\rm Out}(G)=\{1\}$. Then $G$ is strongly W$^*$-superrigid.
\end{namedthm*}

Using an approach that combines our construction of central extensions with the control of outer automorphisms of Dehn fillings from \cite{DG18} (as also utilized in \cite[Theorem 2.10]{cios3}), we were able to obtain the following upgrade of \ref{TheoremD}, which, in particular, provides many examples of property (T) groups $G$ that satisfy \ref{TheoremC}.

\begin{namedthm*}{Theorem  E}\label{centralextwithOut=1} For each $n\in\N$, there is a central extension $1\to Z(G)\hookrightarrow G\twoheadrightarrow^{\pi} W\to 1$ satisfying

\begin{enumerate}
    \item[(a)] $Z(G)\cong \Z^n$;
    \item[(b)] $W\in\mathcal{WR}(\Z,B\curvearrowright I)$  is an ICC group where $B$ is ICC hyperbolic, $B\curvearrowright I$ is transitive with finite stabilizers and ${\rm Out}(B)=\{1\}$; 
    \item[(c)] $\pi^{-1}(\mathbb Z^{(I)})=Z(G)\times \mathbb Z^{(I)}$, where $\mathbb Z^{(I)}\leqslant W$ is the base; and
    \item[(d)] $G$ has property (T), trivial abelianization and ${\rm Out}(G)=\{1\}$. 
\end{enumerate}
\end{namedthm*}

We also mention in passing that the previous two results yield, in particular, property (T) groups $G$ with infinite centers extensions  for which ${\rm Out}(\mathcal{L}(G)) = 1$. To the best of our knowledge, this is the first concrete computation of outer automorphisms of property (T) von Neumann algebras with diffuse center.

Our main rigidity results for von Neumann algebras $\mathcal L(G)$ of groups with infinite center are obtained using a von Neumann algebraic technique that involves two main steps. Specifically, if $H$ is any group with $\mathcal L(H) \cong \mathcal L(G)$, the first step consists of reconstructing the center, namely, showing that, up to finite index and normal finite subgroup, $H$ is itself a nontrivial central extension with ICC central quotient $H/Z(H)$. This part is accomplished by largely recycling our previous methods from \cite{cfqt23}, including the techniques therein on integral decompositions of group von Neumann algebras. Once this is achieved, the second step, using integral decomposition techniques, is to establish W$^*$-superrigidity results for twisted group factors $\mathcal L_c(G/Z(G))$, which we view as one of the main contribution of this paper.

Specifically, we prove the following result, which generalizes the main result from \cite{cios22} and is motivated by a generalization of Popa's strengthening of Connes' rigidity conjecture \cite{popa07} to group II$_1$ factors twisted by scalar-valued 2-cocycles.

\begin{namedthm*}{Theorem F}\label{strongsuperrig} Let  $A$ be a nontrivial free abelian group, $B$ a nontrivial ICC subgroup of a hyperbolic group, $B\curvearrowright I$ an action on a countable set with amenable stabilizers. Let $G\in\mathcal{WR}(A,B\curvearrowright I )$ be a property (T) group and $d:G\times G\to\mathbb T$ a 2-cocycle that is trivial on the base $A^{(I)}$. Further, let $H$ be an arbitrary countable ICC group and let $c\colon H\times H\to\mathbb{T}$ be an arbitrary 2-cocycle. Assume that there exists a $*$-isomorphism. 
 $\Psi\colon \cL_d(G)^t\to \cL_c(H)$ for some $t>0$.

Then $t=1$ and there exists a group isomorphism $\delta\colon G\to H$ for which $d$ is cohomologous to $c\circ\delta$. Moreover, there exists a map $\xi\colon G\to\mathbb{T}$ with $d(g,h)=\xi_g\xi_hc(\delta(g),\delta(h))\overline{\xi_{gh}}$, a multiplicative character $\eta\colon G\to\mathbb{T}$ and a unitary $w\in\cL_d(H)$ such that

\begin{equation*}
    \Psi(u_g)=\eta_g\xi_{g}wv_{\delta(g)}w^*,\;\;\; \text{for all } g\in G.
\end{equation*}
\end{namedthm*}

In connection with this, Donvil and Vaes were able to establish in \cite[Theorem A]{DV24}, for the first time in the literature, this type of rigidity for certain class of non-property (T) groups $G$ coming from left-right wreath-product groups. More broadly, their results extends to virtual isomorphisms $\Psi$. 

\vspace{1mm}

The results in this paper were obtained in parallel with, and independently of, the work of Donvil and Vaes in \cite{DV24center}. In \cite{DV24center}, the authors establish that a fairly wide class of nontrivial central extensions $\tilde{G}$ of the form $1 \to C \to \tilde{G} \to G \to 1$, where $G$ is a left-right wreath product, are W$^*$-superrigid. A significant portion of their analysis is devoted to proving certain cohomological results, whereas, in our case, these follow more directly from the property (T) assumption. It is worth noting, however, that their results also address W$^*$-superrigidity in the broader context of virtual isomorphisms. While there is a natural overlap in the general approach between this paper and \cite{DV24center}, the technical overlap is relatively small, being limited to the methodology appearing in Section \ref{section5} and our prior work \cite{cfqt23}.

\noindent\textbf{Organization of the article.}  Building on prior results in geometric group theory \cite{Osin07,DGO11,DG18,Sun,cios22} we introduce in Section \ref{section2} a new quotienting technique which yields central extensions by wreath-like product groups, which serve as the primary examples for our results. Along the way, we extend several group-theoretic results from \cite{cios22, cios3}. Section \ref{section3} provides preliminaries on von Neumann algebras that are used in the subsequent sections. In Section \ref{section4} we establish the key rigidity results for twisted group factors of wreath-like product groups which is instrumental in the \emph{fiber analysis} used to prove the main results. These findings also generalize and strengthen earlier work in \cite{cios22, cios22b, cfqt23}. Finally, Section \ref{section5} presents the proof of Theorem \ref{mainresult}, from which \ref{TheoremA}, \ref{theoremB}, and \ref{TheoremC} are derived.

\section{Generalized wreath-like product groups}\label{section2}

\subsection{Generalized wreath-like products}\label{constructionofgroups}

The notion of a wreath-like product of two countable groups $A$, $B$ and a given action $B\curvearrowright I$ on a countable set $I$  was introduced in \cite{cios22}. In this paper, we consider a more general situation.

\begin{defn}\label{Def:GWRP}
Let $B$ be a group and $\Lambda$ be an index set. For every $\lambda  \in \Lambda$, let  $A_\lambda$ be a group and let  $B\curvearrowright I_\lambda$ be an action. We say that a group $W$ is a \emph{generalized wreath-like product} associated with this data if $W$ appears as a short exact sequence

\begin{equation*}
    1\longrightarrow \bigoplus_{\lambda \in \Lambda } \left(\bigoplus_{i\in I_\lambda } A_{\lambda, i}\right) \longrightarrow W \stackrel{\e}\longrightarrow B \longrightarrow 1
\end{equation*}

\noindent such that the following conditions hold:
\begin{enumerate}
\item[(a)]  $A_{\lambda, i}\cong A_\lambda $ for all $\lambda \in \Lambda $ and $i\in I_\lambda$;
\item[(b)] for every $w\in W$,  $\lambda \in \Lambda$, and $i\in I_\lambda$, we have $wA_{\lambda, i}w^{-1}=A_{\lambda, \e(w) i}$. 
\end{enumerate} 

The collection of all generalized wreath-like products satisfying this definition will be denoted by  $\WR(\Al, \{ B\curvearrowright I_\lambda\}_{\lambda\in \Lambda})$. The map $\e$ is called the \emph{canonical homomorphism} associated with the generalized wreath-like product structure of $W$ and the normal subgroup $\oplus_{\lambda \in \Lambda } \left(\oplus_{i\in I_\lambda } A_{\lambda, i}\right )$ is called  \emph{the base of $W$}.

In this paper, we will often deal with the situation when all  $I_\lambda =B$, the action $B\curvearrowright I_\lambda$ being by translation. In this case, we call $W$ a \emph{regular generalized wreath-like product} of groups $\Al$ and $B$ and the category of all such groups will be denoted by $\WR(\Al, B)$.
\end{defn}

When $|\Lambda|=1$, Definition \ref{Def:GWRP} recovers the notion of (ordinary) wreath-like product introduced in \cite{cios22,cios22b}. In \cite{cios22b}, the authors introduced a general quotienting technique for constructing ordinary wreath-like products based on the notion of a Cohen–Lyndon triple (or subgroup) from \cite{Sun}. Since our aim is to expand this construction to collections of subgroups we start by recalling some definitions.   

Let $G$ be a group. For a subset $S\subseteq G$, we denote by $\ll S\rr$ its normal closure in $G$. The \emph{left transversal} of a subgroup $H\leqslant G$ is any subset of $G$ that intersects every left coset of $H$ exactly once. We denote by $LT(H,G)$ the set of all left transversals of $H$ in $G$. 


\begin{defn}\label{CLSun}(\cite[Definition 3.13]{Sun}) Let $G$ be a group, let $\Hl$ be a family of subgroups of $G$, and let $N_{\lambda}\ngroup H_\lambda $ be a normal subgroup, for every $\lambda\in\Lambda$. The triple $(G,\Hl,\{N_{\lambda}\}_{\lambda\in\Lambda})$ has the \textit{Cohen-Lyndon property} if there is a left transversal $T_{\lambda}\in LT(H_{\lambda}\ll N\rr, G)$, for every $\lambda\in\Lambda$, such that 

\begin{equation}\label{Eq:CL}\ll N \rr = \Ast_{\lambda \in \Lambda}\left (\Ast_{t\in T_{\lambda}}tN_\lambda t^{-1}\right),\end{equation}

\noindent where $N = \bigcup_{\lambda\in \Lambda} N_\lambda$. When $N_\lambda=H_\lambda$ for all $\lambda \in \Lambda$ we say the collection $\{H_\lambda\}_{\lambda\in \Lambda}$ \emph{has the Cohen-Lyndon property in $G$}. If $\Lambda =\{\lambda\}$ is a singleton and $N_\lambda =H_\lambda\leqslant G$, we simply say $H_\lambda$ is a \emph{Cohen-Lyndon subgroup} of $G$.
\end{defn}

For further use we recall the main result from \cite{Sun}. 

\begin{thm}[{\cite[Theorem 5.1]{Sun}}]\label{deepclprop} Let $G$ be a group and let $\{H_\lambda\}_{\lambda\in \Lambda} \hookrightarrow^{h} (G,X)$ be a family of hyperbolically embedded subgroups for some $X \subset G$. Then for every $\lambda\in \Lambda$ one can find a finite subset $F_\lambda \subset H_\lambda\setminus \{1\}$ such that for every normal subgroup $N_\lambda \ngroup H_\lambda$ with $N_\lambda \cap F_\lambda=\emptyset$ the triple $(G, \{H_\lambda\}_{\lambda\in \Lambda} ,\{N_\lambda\}_{\lambda\in \Lambda})$ has the Cohen-Lyndon property.    
\end{thm}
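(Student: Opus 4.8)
The plan is to derive this from the theory of group-theoretic Dehn fillings and \emph{very rotating families} of Dahmani--Guirardel--Osin \cite{DGO11}, viewing it as a broad generalization of the classical Cohen--Lyndon theorem for free groups (the case $|\Lambda|=1$, $G$ free, and $H_\lambda=N_\lambda$ a maximal cyclic subgroup). Since $\Hl\hookrightarrow^h(G,X)$, the group $G$ acts on the hyperbolic Cayley graph $\widehat{\Gamma}=\Gamma(G,X\sqcup\cH)$, where $\cH=\bigsqcup_{\lambda\in\Lambda}H_\lambda$, and each $H_\lambda$ carries a locally finite relative metric. Coning off every left coset $gH_\lambda$ by attaching a new \emph{apex} $v_{gH_\lambda}$ joined to the vertices of $gH_\lambda$, one obtains a hyperbolic space $Y$ on which $G$ acts with the stabilizer of $v_{gH_\lambda}$ equal to $gH_\lambda g^{-1}$.

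I would then fix the exceptional sets $F_\lambda$ using the Dehn filling theorem of \cite{DGO11}, applied to the hyperbolically embedded family $\Hl$ with a preassigned large rotation constant: for each $\lambda\in\Lambda$ there is a finite subset $F_\lambda\subset H_\lambda\setminus\{1\}$ such that, for every $N_\lambda\ngroup H_\lambda$ with $N_\lambda\cap F_\lambda=\emptyset$ and $N=\bigcup_{\lambda\in\Lambda}N_\lambda$, one has: (i) assigning to each apex $v_{gH_\lambda}$ the subgroup $gN_\lambda g^{-1}$ defines a very rotating family on $Y$; and (ii) $\ll N\rr\cap H_\lambda=N_\lambda$, hence, $\ll N\rr$ being normal in $G$, also $\ll N\rr\cap gH_\lambda g^{-1}=gN_\lambda g^{-1}$ for all $g\in G$. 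Enlarging the $F_\lambda$ if necessary, I may assume (i) and (ii) hold simultaneously. Observe that the subgroup $\langle\, gN_\lambda g^{-1}:g\in G,\ \lambda\in\Lambda\,\rangle$ generated by the rotation subgroups of this family is exactly the normal closure $\ll N\rr$.

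Next I would invoke the structure theorem for very rotating families: the subgroup generated by such a family is the free product of its rotation subgroups, one factor for each orbit of apices under that subgroup. The proof is a ping-pong argument on $Y$ --- the ``very rotating'' hypothesis forces a nontrivial rotation at an apex $v$ to move every point lying at controlled distance from $v$ a definite large amount in a tightly controlled direction, so that a reduced alternating word in rotations about distinct apices drags a basepoint monotonically away from its start and hence represents a nontrivial element of $G$. It remains to index the orbits. Two apices $v_{gH_\lambda}$ and $v_{g'H_\lambda}$ lie in a common $\ll N\rr$-orbit iff $g'\in\ll N\rr gH_\lambda$, which, since $\ll N\rr$ is normal in $G$, is equivalent to $g(H_\lambda\ll N\rr)=g'(H_\lambda\ll N\rr)$; apices attached to cosets of distinct $H_\lambda$'s lie in distinct orbits. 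Thus a set of representatives for the type-$\lambda$ orbits is $\{v_{tH_\lambda}:t\in T_\lambda\}$ for some left transversal $T_\lambda\in LT(H_\lambda\ll N\rr,G)$, and by (ii) the rotation subgroup at $v_{tH_\lambda}$ is $\ll N\rr\cap tH_\lambda t^{-1}=tN_\lambda t^{-1}$. Substituting into the free product decomposition yields $\ll N\rr=\Ast_{\lambda\in\Lambda}\bigl(\Ast_{t\in T_\lambda}tN_\lambda t^{-1}\bigr)$, which is exactly \eqref{Eq:CL}, so $(G,\Hl,\{N_\lambda\}_{\lambda\in\Lambda})$ has the Cohen--Lyndon property.

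The main obstacle is the geometric input underlying (i) together with the free product assertion: showing that, once $N_\lambda$ avoids a suitable finite set, the conjugates $gN_\lambda g^{-1}$ really do form a very rotating family on a hyperbolic space, and that the generated subgroup then splits as the advertised free product. This is the technical heart of the Dehn filling machinery, where hyperbolicity of $\widehat{\Gamma}$, local finiteness of the relative metrics, and a careful analysis of bounded versus unbounded cosets all enter. A secondary point needing care is the bookkeeping: one must choose the $F_\lambda$ so that all the required conclusions hold at once, and verify that the orbit representatives produced by the structure theorem can be organized into genuine left transversals of $H_\lambda\ll N\rr$ in $G$, compatibly over $\lambda$, so as to match Definition \ref{CLSun} on the nose.
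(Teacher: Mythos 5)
The paper does not contain a proof of this statement: Theorem~\ref{deepclprop} is quoted verbatim as \cite[Theorem 5.1]{Sun} and is used as a black box, with the reader referred to Sun's paper. So there is no in-paper proof to compare against, and your sketch is instead being measured against Sun's argument.

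At a high level, your plan is a faithful reconstruction of that argument. Sun does indeed work with the cone-off construction and the very rotating family / Dehn filling machinery of Dahmani--Guirardel--Osin \cite{DGO11}: one fixes finite exceptional sets $F_\lambda$ so that, for $N_\lambda\cap F_\lambda=\emptyset$, the assignment of $gN_\lambda g^{-1}$ to the apex $v_{gH_\lambda}$ is very rotating, one has $\ll N\rr\cap H_\lambda=N_\lambda$, and the structure theorem for rotating families gives a free product decomposition of $\ll N\rr$ over a set of apex-orbit representatives. Your orbit computation is correct: $v_{gH_\lambda}$ and $v_{g'H_\lambda}$ lie in the same $\ll N\rr$-orbit iff $g'\in\ll N\rr gH_\lambda$, which by normality of $\ll N\rr$ is $g(H_\lambda\ll N\rr)=g'(H_\lambda\ll N\rr)$, so the type-$\lambda$ orbit representatives are indexed by a left transversal $T_\lambda\in LT(H_\lambda\ll N\rr,G)$, and the rotation subgroup at $v_{tH_\lambda}$ is $tN_\lambda t^{-1}$ by the Dehn-filling conclusion (ii). Substituting gives \eqref{Eq:CL}.

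The one place where you are leaning hardest on unverified machinery is the step from ``DGO structure theorem'' to the specific decomposition you want. The DGO theorem gives that the normal closure generated by a very rotating family decomposes as a free product over \emph{some} subfamily of the rotation subgroups; to match \eqref{Eq:CL} you need to show that this subfamily can be taken to consist of exactly one apex per $\ll N\rr$-orbit and that the resulting index set can be organized into honest left transversals of $H_\lambda\ll N\rr$ in $G$, with no collapses or repetitions. You flag this yourself, and it is genuinely where Sun does a nontrivial amount of bookkeeping beyond what DGO state. The remaining acknowledged gaps --- verifying that avoiding a finite set makes the family very rotating, and the ping-pong proof of the free product decomposition --- are exactly the technical core of \cite{DGO11} and \cite{Sun}, so the sketch is honest about its dependence on external results. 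In short: correct route, consistent with the source being cited, with the heavy lifting explicitly deferred to DGO/Sun rather than reproduced.
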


The next definition generalizes \cite[Definition 4.7]{cios22b}.

\begin{defn}\label{Def:UWR}
Let $W\in\WR(\Al, \{ B\curvearrowright I_\lambda\}_{\lambda\in \Lambda})$ for some index set $\Lambda$, some groups $A_\lambda$, and some actions $B\curvearrowright I_\lambda$. For any $\lambda\in \Lambda$ and $i\in I_\lambda$, let $P_{\lambda, i}$ denote the preimage of $Stab_B(i)$ in $W$ under the canonical homomorphism $W\to B$. By the definition of a wreath-like product, $P_{\lambda,i}$ normalizes the subgroup $A_{\lambda,i}$ of the base of $W$. In particular, elements of $P_{\lambda, i}$ act as automorphisms of $A_{\lambda, i}$. We say that the generalized wreath-like product $W$ is \emph{untwisted} if all these automorphisms are trivial, i.e., $P_{\lambda,i}$ is contained in the centralizer $C_W(A_{\lambda,i})$  for all $\lambda\in \Lambda$ and all $i\in I_\lambda$. 

The subset of untwisted generalized wreath-like products in $\WR(\Al, \{ B\curvearrowright I_\lambda\}_{\lambda\in \Lambda})$ will be denoted by $\WR_0(\Al, \{ B\curvearrowright I_\lambda\}_{\lambda\in \Lambda})$. 

Note that if the action of $B$ on each $I_\lambda$ is transitive, it suffices to check that $P_{\lambda, i}\le C_W(A_{\lambda,i})$ for all $\lambda \in \Lambda$ and at least one $i\in I_\lambda$ in order to show that $W$ is untwisted.
\end{defn}

The next result generalizing \cite[Theorem 2.7]{cios22b}, follows easily from \cite[Proposition 6.1]{Sun}. 

\begin{thm}\label{Thm:Sun}
Let $G$ be a group, $\Hl$ be a collection of subgroups of $G$, and $N_{\lambda}\ngroup H_\lambda$ be a normal subgroup,  for every $\lambda$, such that $(G,\Hl,\{N_{\lambda}\}_{\lambda\in\Lambda})$ has the Cohen-Lyndon property. Let $N=\ll \bigcup_{\lambda\in \Lambda} N_\lambda \rr$ and $A_\lambda = N_\lambda /[N_\lambda, N_\lambda]$ for all $\lambda\in \Lambda$. Then 

\begin{equation*}
    G/[N,N]\in \WR (\Al, \{G/N\curvearrowright I_{\lambda}\}_{\lambda\in\Lambda}),
\end{equation*}

\noindent where $I_{\lambda}=G/H_{\lambda}N$, the action  $G/N\curvearrowright I_{\lambda}$ is by left multiplication, and the stabilizers of elements of $I_{\lambda}$ are isomorphic to $H_{\lambda}/N_{\lambda}$. Furthermore, if all $H_\lambda$ are abelian, then the wreath-like product is untwisted.
\end{thm}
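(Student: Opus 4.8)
The plan is to derive everything from the free-product decomposition provided by the Cohen--Lyndon property, together with the elementary fact that the abelianization of a free product is the direct sum of the abelianizations of its free factors, each factor mapping onto its own abelianization inside the abelianization of the product.

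\emph{Setup.} Put $\cN=\bigcup_{\lambda\in\Lambda}N_\lambda$, so that $N=\ll\cN\rr$, and set $B=G/N$. By hypothesis we may fix, for each $\lambda$, a left transversal $T_\lambda\in LT(H_\lambda N,G)$ with $N=\Ast_{\lambda\in\Lambda}\big(\Ast_{t\in T_\lambda}tN_\lambda t^{-1}\big)$; note $T_\lambda$ is in canonical bijection with $I_\lambda=G/H_\lambda N$ via $t\mapsto tH_\lambda N$, and since $N\subseteq H_\lambda N$ the left-multiplication action $G\curvearrowright I_\lambda$ descends to $B\curvearrowright I_\lambda$. As $[N,N]$ is characteristic in $N\ngroup G$, it is normal in $G$, giving a short exact sequence $1\to N/[N,N]\to G/[N,N]\xto{\e}B\to 1$. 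For $t\in T_\lambda$, conjugation by $t$ identifies $N_\lambda$ with $tN_\lambda t^{-1}$, and hence $A_\lambda=N_\lambda/[N_\lambda,N_\lambda]$ with the image $A_{\lambda,i}$ of $tN_\lambda t^{-1}$ in $N/[N,N]$, where $i=tH_\lambda N$; by the remark on abelianizations of free products, $N/[N,N]=\bigoplus_{\lambda}\bigoplus_{i\in I_\lambda}A_{\lambda,i}$ with $A_{\lambda,i}\cong A_\lambda$. This yields the base and condition (a) of Definition \ref{Def:GWRP}.

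\emph{Condition (b).} Let $w\in G/[N,N]$ be the image of $g\in G$, fix $i=tH_\lambda N$, and write $gt=t'm$, where $t'\in T_\lambda$ is the unique transversal element with $gtH_\lambda N=t'H_\lambda N$ and $m\in H_\lambda N$. Then $g(tN_\lambda t^{-1})g^{-1}=t'(mN_\lambda m^{-1})(t')^{-1}$. Decompose $m=hn$ with $h\in H_\lambda$, $n\in N$. For $x\in N_\lambda$ one has $nxn^{-1}x^{-1}=[n,x]\in[N,N]$, so conjugation by $n$ is trivial on $N/[N,N]$; and $hN_\lambda h^{-1}=N_\lambda$ since $N_\lambda\ngroup H_\lambda$. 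Hence $mN_\lambda m^{-1}$ and $N_\lambda$ have the same image in $N/[N,N]$, and applying the automorphism of $N/[N,N]$ induced by conjugation by $t'$ gives $wA_{\lambda,i}w^{-1}=A_{\lambda,t'H_\lambda N}=A_{\lambda,\e(w)\cdot i}$, because $\e(w)\cdot i=gtH_\lambda N=t'H_\lambda N$. Thus $G/[N,N]\in\WR(\Al,\{B\curvearrowright I_\lambda\}_{\lambda\in\Lambda})$.

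\emph{Stabilizers and untwistedness.} The action $B\curvearrowright I_\lambda$ is transitive, so all point stabilizers are conjugate, and the stabilizer of $eH_\lambda N$ equals $(H_\lambda N)/N\cong H_\lambda/(H_\lambda\cap N)$; it remains to identify $H_\lambda\cap N=N_\lambda$. One inclusion is immediate, and the reverse is part of the Cohen--Lyndon package, read off from the normal form in $N=\Ast_\lambda\Ast_t tN_\lambda t^{-1}$ relative to the chosen transversals — this is precisely what \cite[Proposition 6.1]{Sun} supplies (generalizing the singleton case behind \cite[Theorem 2.7]{cios22b}). Finally, assume each $H_\lambda$ is abelian. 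For $i=tH_\lambda N$, the group $P_{\lambda,i}$ of Definition \ref{Def:UWR} is the image of $tH_\lambda Nt^{-1}$ in $G/[N,N]$, so after conjugating by $t^{-1}$ it suffices to check that the image of $H_\lambda N$ centralizes the image of $N_\lambda$ in $N/[N,N]$: for $h\in H_\lambda$, $n\in N$, $x\in N_\lambda$ the computation above gives $(hn)x(hn)^{-1}\equiv hxh^{-1}\pmod{[N,N]}$, and $hxh^{-1}=x$ since $h$ and $x\in N_\lambda\leqslant H_\lambda$ lie in the abelian group $H_\lambda$; hence $P_{\lambda,i}\leqslant C_W(A_{\lambda,i})$ and $W$ is untwisted. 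The one genuinely delicate point is the intersection identity $H_\lambda\cap N=N_\lambda$ with the compatible choice of transversals; everything else is bookkeeping with the free-product decomposition and the triviality of inner automorphisms of $N$ on $N/[N,N]$. Since both are furnished by \cite{Sun}, the theorem follows.
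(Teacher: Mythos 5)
Your proof is correct, and it takes a genuinely more self-contained route than the paper. The paper's proof is essentially a one-paragraph restatement of \cite[Proposition 6.1]{Sun}: it invokes the $\ZZ[G/N]$-module isomorphism $N/[N,N]\cong\bigoplus_\lambda \operatorname{Ind}_{H_\lambda/N_\lambda}^{G/N}A_\lambda$ from part (b) of that proposition and then unwinds the description of induced modules to obtain the base decomposition and the conjugation rule. You instead derive both directly from the free-product normal form $N=\Ast_\lambda\Ast_{t\in T_\lambda}tN_\lambda t^{-1}$: the base decomposition comes from the elementary fact about abelianizations of free products, and condition (b) is verified by explicit conjugation bookkeeping — writing $gt=t'm$ with $m\in H_\lambda N$, factoring $m=hn$, and observing that $n\in N$ acts trivially on $N/[N,N]$ while $h\in H_\lambda$ normalizes $N_\lambda$. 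The only point at which you, like the paper, still lean on \cite{Sun} is the identity $H_\lambda\cap N=N_\lambda$ needed to identify the stabilizers; that is unavoidable given how much the Cohen--Lyndon framework is doing. What your approach buys is a cleaner view of where the wreath-like product axioms come from (in particular, the untwistedness argument becomes a two-line computation once the conjugation calculus is in place), at the cost of somewhat more explicit coset bookkeeping. One small point worth making explicit: in writing $(hn)x(hn)^{-1}\equiv hxh^{-1}\pmod{[N,N]}$ one uses that $[N,N]$ is normal in all of $G$, not just in $N$, so that conjugating the congruence by $h$ is legitimate; this is of course true because $[N,N]$ is characteristic in $N\ngroup G$, but it deserves a word.
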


\begin{proof}
This result is essentially a particular version of \cite[Proposition 6.1]{Sun} rephrased using the terminology of our paper. We keep the proof brief and refer to \cite{Sun} for all unexplained notation and details. 

The assumptions of the theorem allow us to apply \cite[Proposition 6.1]{Sun} to the collection $\Hl$ and normal subgroups $N_\lambda\ngroup H_\lambda $. Observe that, in the notation of \cite{Sun}, we have $Q=G/N$ and $R_\lambda =H_\lambda/N_\lambda$ for all $\lambda \in \Lambda$. From \cite[Proposition 6.1(b)]{Sun} we obtain an isomorphism of $\mathbb Z [G/N]$-modules

\begin{equation*}
    N/[N,N] \cong \bigoplus_{\lambda\in \Lambda} Ind_{H_{\lambda}/N_{\lambda} }^{G/N} A_\lambda.
\end{equation*}

\noindent Using the standard description of induced modules (see, \cite[Chapter III Proposition 5.3]{br}), we obtain that $N/[N,N]$ decomposes as 

\begin{equation*}
    N/[N,N] \cong \bigoplus_{\lambda\in \Lambda} \left( \bigoplus_{i\in G/H_{\lambda}N} A_{\lambda, i}\right),
\end{equation*}

\noindent where $A_{\lambda, i}\cong A_\lambda$ and the action of $G/[N,N]$ on $N/[N,N]$ obeys the rule described in part (b) of Definition \ref{Def:GWRP}. Thus, the short exact sequence $1\to N/[N,N] \to G/[N,N]\to G/N\to 1$ gives us the structure of the desired generalized wreath-like product. 

In the notation of Definition \ref{Def:UWR}, we have $P_{\lambda, i}=H_\lambda[N,N]/[N,N]$ for $i=1H_\lambda N \in I_\lambda$. Note that the action of $P_{\lambda, i}$ on  $A_{\lambda, i}$ is induced by the conjugation action of $H_\lambda$ on $N_\lambda$. If every $H_\lambda$ is abelian, the latter action is trivial and, therefore, the generalized wreath-like product is untwisted.
\end{proof}

The previous two results combined yield large classes of generalized wreath-like product groups.

\begin{cor}\label{sunconsequence} Let $G$ be a group and let $\{H_\lambda\}_{\lambda\in \Lambda} \hookrightarrow^{h} (G,X)$ be a family of hyperbolically embedded subgroups for some $X \subset G$. For every $\lambda\in \Lambda$ there is a finite subset $F_\lambda \subset H_\lambda\setminus \{1\}$ such that all normal subgroups $N_\lambda \ngroup H_\lambda$ with $N_\lambda \cap F_\lambda=\emptyset$ satisfy the following property: If $N=\ll \bigcup_{\lambda\in \Lambda} N_\lambda \rr$ and $A_\lambda = N_\lambda /[N_\lambda, N_\lambda]$, for each $\lambda\in \Lambda$, then 

\begin{equation*}
    G/[N,N]\in \WR (\Al, \{G/N\curvearrowright I_{\lambda}\}_{\lambda\in\Lambda}),
\end{equation*}

\noindent where $I_{\lambda}=G/H_{\lambda}N$, the action  $G/N\curvearrowright I_{\lambda}$ is by left multiplication, and the stabilizers of elements of $I_{\lambda}$ are isomorphic to $H_{\lambda}/N_{\lambda}$. Furthermore, if all $H_\lambda$ are abelian, then the wreath-like product is untwisted.
\end{cor}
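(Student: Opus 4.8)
The plan is to deduce Corollary~\ref{sunconsequence} by simply feeding Theorem~\ref{deepclprop} into Theorem~\ref{Thm:Sun}. Concretely, given the hyperbolically embedded family $\{H_\lambda\}_{\lambda\in\Lambda}\hookrightarrow^h (G,X)$, first I would invoke Theorem~\ref{deepclprop} to extract, for each $\lambda\in\Lambda$, the finite subset $F_\lambda\subset H_\lambda\setminus\{1\}$ with the property that any choice of normal subgroups $N_\lambda\ngroup H_\lambda$ avoiding $F_\lambda$ (i.e.\ $N_\lambda\cap F_\lambda=\emptyset$) makes the triple $(G,\{H_\lambda\}_{\lambda\in\Lambda},\{N_\lambda\}_{\lambda\in\Lambda})$ have the Cohen--Lyndon property. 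These are exactly the $F_\lambda$ that the Corollary promises.

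Second, with such $N_\lambda$ fixed, the hypothesis of Theorem~\ref{Thm:Sun} is met verbatim: we have a group $G$, a collection $\{H_\lambda\}_{\lambda\in\Lambda}$ of subgroups, and normal subgroups $N_\lambda\ngroup H_\lambda$ for which $(G,\{H_\lambda\}_{\lambda\in\Lambda},\{N_\lambda\}_{\lambda\in\Lambda})$ has the Cohen--Lyndon property. Setting $N=\ll\bigcup_{\lambda\in\Lambda}N_\lambda\rr$ and $A_\lambda=N_\lambda/[N_\lambda,N_\lambda]$, Theorem~\ref{Thm:Sun} immediately gives $G/[N,N]\in\WR(\{A_\lambda\}_{\lambda\in\Lambda},\{G/N\curvearrowright I_\lambda\}_{\lambda\in\Lambda})$ with $I_\lambda=G/H_\lambda N$, the action by left multiplication, and point stabilizers isomorphic to $H_\lambda/N_\lambda$ --- which is precisely the asserted conclusion. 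The final sentence about untwistedness when all $H_\lambda$ are abelian is likewise inherited directly from the corresponding clause of Theorem~\ref{Thm:Sun}.

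There is essentially no obstacle here: the Corollary is a clean composition of the two preceding theorems, and the only thing to be careful about is matching the notational conventions --- in particular, confirming that the finite sets $F_\lambda$ produced by Theorem~\ref{deepclprop} depend only on the embedded family $\{H_\lambda\}_{\lambda\in\Lambda}\hookrightarrow^h(G,X)$ and not on the later choice of the $N_\lambda$, which is exactly how Theorem~\ref{deepclprop} is phrased. Thus the proof amounts to: apply Theorem~\ref{deepclprop} to get the $F_\lambda$, then apply Theorem~\ref{Thm:Sun} to any admissible choice of $N_\lambda$.
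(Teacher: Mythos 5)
Your proposal is correct and matches the paper's approach exactly: the paper states the Corollary immediately after Theorem~\ref{Thm:Sun} with the remark that ``the previous two results combined yield'' it, which is precisely the composition of Theorem~\ref{deepclprop} (to produce the finite sets $F_\lambda$ and secure the Cohen--Lyndon property) with Theorem~\ref{Thm:Sun} (to obtain the wreath-like product structure and the untwistedness clause).
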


For future reference, we now outline the construction of a class of property (T) generalized wreath-like product groups that have a trivial outer automorphism group and trivial abelianization. The construction and its proof are similar to the methods used in \cite[Corollary 3.24]{cios22b}, \cite[Theorem 4.5]{ciossmall}, and \cite[Theorem 2.10]{cios3}. However, for convenience, we include most of the details.

\begin{thm}\label{trivoutgenwreathlike} For every $m\in\N$, there exists $W\in\mathcal{WR}_0(\{A_j\}^m_{j=1},\{B\curvearrowright I_j\}_{j=1}^m)$ satisfying the following properties:

\begin{enumerate}
    \item[(a)] $A_j\cong\Z$, for all $1\leq j\leq m$;
    \item[(b)] $B$ is ICC hyperbolic, the action  $B\curvearrowright I_j$ is transitive with finite stabilizers for all $1\leq j\leq m$, and $\text{\rm Out}(B)=\{1\}$; 
    \item[(c)] $W$ has property (T) and trivial abelianization. 
\end{enumerate}
\end{thm}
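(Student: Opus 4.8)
\textbf{Proof proposal for Theorem \ref{trivoutgenwreathlike}.}

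The plan is to produce the group $W$ as a quotient of a carefully chosen hyperbolic group via Corollary \ref{sunconsequence}, following the template of \cite[Corollary 3.24]{cios22b} and \cite[Theorem 2.10]{cios3}. First I would start with a non-elementary torsion-free hyperbolic group $B$ with property (T) and $\mathrm{Out}(B)=\{1\}$; such groups exist (e.g.\ take a random quotient of a free group, or invoke the constructions in \cite{cios22b,cios3} based on results of Belegradek--Osin and Dahmani--Guirardel--Osin on Dehn fillings). The reason one insists on property (T) at the level of $B$ is that property (T) is inherited by the wreath-like product in the relevant range (since the base is a direct sum of copies of $\Z$ acted on by $B$ with the appropriate spectral-gap behavior coming from the hyperbolically embedded structure), and ICC-ness and hyperbolicity of $B$ guarantee the combinatorial control needed for the Cohen--Lyndon machinery. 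Then, inside $B$, I would choose $m$ suitable elements $b_1,\dots,b_m$ of infinite order whose cyclic (or suitably large) subgroups $H_1=\langle b_1\rangle,\dots,H_m=\langle b_m\rangle$ form a hyperbolically embedded family $\{H_j\}_{j=1}^m\hookrightarrow^h (B,X)$; this is possible by \cite{DGO11} (any finite collection of pairwise non-commensurable loxodromic elements, after passing to high powers, gives such a family), and one additionally arranges $B\curvearrowright B/H_j$ to be transitive with finite (indeed trivial, if $H_j$ is chosen malnormal) stabilizers.

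Next I would apply Theorem \ref{deepclprop}/Corollary \ref{sunconsequence}: for each $j$ there is a finite exceptional set $F_j\subset H_j\setminus\{1\}$, and I pick normal subgroups $N_j\ngroup H_j$ avoiding $F_j$ with $N_j\cong\Z$ and $H_j/N_j$ finite (concretely $N_j=\langle b_j^{k_j}\rangle$ for $k_j$ large). Setting $N=\ll\bigcup_j N_j\rr$ and $W=B/[N,N]$, Corollary \ref{sunconsequence} gives $W\in\WR_0(\{A_j\}_{j=1}^m,\{W/N\curvearrowright I_j\}_{j=1}^m)$ with $A_j=N_j/[N_j,N_j]\cong\Z$, base quotient $W/N$, index sets $I_j=B/H_jN$ with transitive $W/N$-action and finite stabilizers $H_j/N_j$; and the wreath-like product is untwisted because each $H_j$ is abelian. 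One should also verify $B/N$ is again non-elementary hyperbolic (a Dehn filling of $B$ relative to the $H_j$, by \cite{Osin07,DGO11}), so $W/N$ plays the role of the group denoted $B$ in the statement; strictly, the statement's $B$ is this Dehn filling, so I would relabel accordingly and check it is ICC hyperbolic with $\mathrm{Out}=\{1\}$ — the latter being exactly where one invokes the control of outer automorphisms of Dehn fillings from \cite{DG18} as in \cite[Theorem 2.10]{cios3}.

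For part (c): property (T) of $W$ follows because $W$ surjects onto the Dehn filling $B/N$ (which has property (T) as a quotient of $B$) with kernel $N/[N,N]$, and the pair $(W, N/[N,N])$ has relative property (T) thanks to the hyperbolically embedded structure inducing a spectral gap — this is the standard argument from \cite{cios22b} (alternatively, $W$ itself is exhibited as a Dehn filling of a relatively hyperbolic group with property (T) peripherals, so property (T) passes through). Trivial abelianization is arranged by choosing $B$ with trivial abelianization (possible alongside the other constraints) and the exponents $k_j$ so that the images of $A_j$ die in $W^{\mathrm{ab}}$; concretely one computes $W^{\mathrm{ab}}$ from the presentation and kills the finitely many resulting cyclic contributions by an appropriate choice of the $N_j$ and by a further central Dehn filling if needed, exactly as in \cite[Corollary 3.24]{cios22b}. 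Finally, $\mathrm{Out}(W)=\{1\}$: here I would use that $W$ is an iterated Dehn filling and apply the $\mathrm{Out}$-rigidity of Dehn fillings (\cite{DG18}, used as in \cite[Theorem 2.10]{cios3}) together with the observation that the base $\bigoplus_j\bigoplus_{I_j}A_j$ is a characteristic subgroup (it is, e.g., the set of elements with amenable — here, abelian — centralizer-related invariants, or one identifies it via the unique wreath-like structure), so any automorphism descends to $B/N$ and is controlled there; combining with $\mathrm{Out}(B/N)=\{1\}$ and the rigidity of the fibered structure kills all outer automorphisms.

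The main obstacle I expect is the simultaneous bookkeeping: one must choose $B$, the elements $b_j$, and the exponents $k_j$ so that \emph{all} of property (T), triviality of abelianization, triviality of $\mathrm{Out}(W)$, ICC-ness, transitivity with finite stabilizers, and the Cohen--Lyndon hypotheses hold at once — and each constraint restricts the available choices, while the $\mathrm{Out}$-computation in particular requires the Dehn-filling parameters to lie in the (co-finite but a priori unquantified) "sufficiently deep" range of \cite{DG18}. Reconciling these finitely many genericity conditions — showing their intersection is nonempty — is the technical heart, and it is precisely the part that parallels, and must be adapted from, \cite[Corollary 3.24]{cios22b}, \cite[Theorem 4.5]{ciossmall}, and \cite[Theorem 2.10]{cios3}.
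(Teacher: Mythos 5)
Your outline follows the same broad strategy as the paper's proof: start from a hyperbolic group with the desired properties, pick suitable elements giving a hyperbolically embedded family of cyclic subgroups, pass to deep Dehn fillings and the second commutator quotient via the Cohen--Lyndon machinery (Theorem \ref{deepclprop}/Theorem \ref{Thm:Sun}), and relabel so that the statement's $B$ is the Dehn filling. However, the argument as written has a genuine gap, plus two places where you make the proof considerably harder than it needs to be.

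\textbf{The gap.} You write that $\mathrm{Out}(B/N)=\{1\}$ ``is exactly where one invokes the control of outer automorphisms of Dehn fillings from \cite{DG18},'' but you never explain how the hypothesis of that theorem --- that automorphisms of the filled group respect the peripheral structure up to conjugacy --- gets verified. The paper makes this work by choosing the exponents with a carefully orchestrated divisibility-and-coprimality pattern (display \eqref{relprime}) so that, via \cite[Theorem 7.19(f)]{DGO11} and the uniform bound on torsion in hyperbolic groups, for $k$ large the only elements of order $n_{k,j}$ in $G/N_k$ are conjugates of elements of $\langle a_j\rangle N_k/N_k$; this forces any automorphism to send each peripheral cyclic subgroup to a conjugate of \emph{itself} (not merely to some peripheral), after which \cite[Theorem 3]{DG18} applies. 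Without this (or an equivalent mechanism), the invocation of \cite{DG18} is not justified, and this is precisely the technical heart you flag at the end without naming its resolution.

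\textbf{Overcomplications.} For part (c), both properties are immediate: $W=G/[N,N]$ is a quotient of the starting hyperbolic group $G$, which is chosen with property (T) and trivial abelianization; property (T) passes to quotients, and a quotient of a perfect group is perfect, so nothing about relative property (T), spectral gap, or ``killing cyclic contributions via a further central Dehn filling'' is needed. Finally, you spend a paragraph arguing $\mathrm{Out}(W)=\{1\}$, but the statement of Theorem \ref{trivoutgenwreathlike} only requires $\mathrm{Out}(B)=\{1\}$ for the Dehn filling $B$, not for $W$; the stronger claim about $W$ is the content of a different result (\ref{centralextwithOut=1}, obtained via Corollary \ref{trivout}) and does not belong in this proof.
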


\begin{proof}

Let $G$ be an infinite, ICC, hyperbolic group with trivial abelianization and property (T) such that and $\text{Out}(G)=\{1\}$. For instance, such a group exists by \cite[Lemma 2.11]{cios3}. 

Let $a_1,\ldots,a_m\in G$ be pairwise non-conjugate elements of infinite order such that $G$ is hyperbolic relative to the family of cyclic subgroups $\{\langle a_1\rangle,\ldots,\langle a_m\rangle\}$; i.e. $E(a_j)=\langle a_j\rangle$, see for instance  \cite[Theorem 7.11]{bowditch} and \cite[Lemma 3.4]{Ols93}. Let $J_1=\{n_{k,1}:k\in \N\},\ldots, J_m=\{n_{k,m}:k\in\N\}$ be strictly increasing infinite sequences of positive integers such that 

\begin{equation}\label{relprime} n_{k,j} |n_{k+1,j}\text{ and } (n_{k,j},n_{l,j'})=1\text{ for all }k,l\in\N\text{ and }1\leq j\neq j'\leq m.\end{equation} 

\noindent For every $k\in \mathbb N$ consider the normal closure $N_k=\ll (a_1)^{n_{k,1}},\ldots,(a_m)^{n_{k,m}}\rr\lhd G$. By \cite{Osin07,DGO11,Sun,cios22b}, there exists large $l$ such that for every $k\geq l$, the quotient $G/N_k$ is hyperbolic relative to $\{\langle a_1\rangle N_k/N_k,\ldots,\langle a_m\rangle N_k/N_k\}$. Notice that all subgroups $\langle a_j\rangle N_k/N_k$ are cyclic of order $n_{k,j}$ (see also \cite[Theorem 3.28(a)]{cios22b}). Recall that finite orders of elements of every hyperbolic group are uniformly bounded \cite{BG95}. Note that $n_{k,j}\to \infty$, as $k\to \infty$ for every $j$. Using \cite[Theorem 7.19(f)]{DGO11} together with the relative primeness conditions from \eqref{relprime}, we obtain that there exists a large $l$ such that for all $k\geq l$ the only elements of order $n_{k,j}$ in $G/N_k$ are the conjugates of $\langle a_j\rangle N_k/N_k$, for all $1\leq j\leq m$. 

Next, we use \cite[Theorem 3]{DG18} and the same argument from \cite[Lemma 2.10]{cios3} to show there is $l$ large so that $\text{Out}(G/N_k)=\{1\}$ for all $k\geq l$. For convenience we reproduce the argument here. 

Assume that there exists an infinite set $X\subset\N$ such that, for every $x\in X$, there is a non-inner automorphism $\alpha_x\in \text{Aut}(G/N_x)$. Let $\varepsilon_x\colon G\to G/N_x$ denote the natural homomorphism. From the prior paragraph, each $\alpha_x$ will map the cyclic subgroup $\langle a_j\rangle N_x/N_x$ onto its conjugates, for $1\leq j\leq m$. Thus, by \cite[Theorem 3]{DG18} (see also \cite[Theorem 2.9]{cios3}) there exists $\alpha\in\text{Aut}(G)$, an infinite subset $Y\subset \N$, and inner automorphisms $\iota_x\in \text{Inn}(G/N_x)$ such that the diagram

\[\begin{tikzcd}
	G &&& G \\
	\\
	{G/N_x} &&& {G/N_x}
	\arrow["\alpha", from=1-1, to=1-4]
	\arrow["\varepsilon_x", from=1-1, to=3-1]
	\arrow["\varepsilon_x", from=1-4, to=3-4]
	\arrow["{\iota_x\circ\alpha_x}", from=3-1, to=3-4]
\end{tikzcd}\]

\noindent is commutative for all $x\in Y$. Since $\text{Out}(G)=\{1\}$, $\alpha$ is an inner automorphism of $G$. Consequently,  $\alpha_x\in\text{Inn}(G/N_x)$ for all $x\in Y$, which contradicts the assumption about the choice of $\alpha_x$. This implies that one can find $l$ large enough such that $ \text{Out}(G/N_k)=\{1\}$ for all $k\geq l$.

Using Theorem \ref{deepclprop} there is $l$ large such that for all $k\geq l$ the triple $(G,\{\langle a_j\rangle\}_{j=1}^m,\{\langle (a_j)^{n_{k,j}}\rangle\}_{j=1}^m)$ satisfies the Cohen-Lyndon property as in Definition \ref{CLSun}. Thus, for each $1\leq j\leq m$, we can find a left transversal $I_{k,j}\in LT(\langle a_j\rangle N_k,G)$ satisfying

\begin{equation*}
    N_k=\underset{1\leq j\leq m}{\ast}\left(\underset{t\in I_{k,j}}{\ast} t\langle (a_j)^{n_{k,j}}\rangle t^{-1} \right),\:\text{ for all }k\geq l.
\end{equation*}

\noindent Now consider the commutator subgroup $[N_k,N_k]\lhd G$. By Theorem \ref{Thm:Sun}, for every $k\geq l$, the quotient $W_k=G/[N_k,N_k]$ satisfies $W_k\in\mathcal{WR}_0(\{A_{j}\}^m_{j=1},\{G/N_k\curvearrowright I_{k,j}\}_{j=1}^m)$ where $A_{j} \cong \mathbb Z$ for all $1\leq j\leq m$, and the action $G/N_k\curvearrowright I_{k,j}$ is transitive with finite stabilizers. 

Finally, note that $W_k$ has property (T) and trivial abelianization being a quotient of $G$.
\end{proof}

In the remainder of this subsection, we will focus on the construction of regular generalized wreath-like product groups.

Recall that a hyperbolic group is called \emph{elementary} if it is virtually cyclic. We will also need the following simplification of \cite[Proposition 4.21]{cios22b}.

\begin{prop}\label{Prop:CLS}
Let $G$ be a non-elementary hyperbolic group. For every $n\in \NN$, there exists a Cohen--Lyndon subgroup $H\leqslant G$ such that $H$ is free of rank $n$, and $G/\ll H\rr$ is non-elementary hyperbolic. Moreover, if $G$ is torsion-free, then so is $G/\ll H\rr$.
\end{prop}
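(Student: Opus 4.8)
The plan is to deduce Proposition \ref{Prop:CLS} from the more technical \cite[Proposition 4.21]{cios22b} by extracting the portion of its conclusion that we actually need and discarding the extra data. Recall that \cite[Proposition 4.21]{cios22b} produces, for a non-elementary hyperbolic group $G$ and given parameters, a free subgroup $H\leqslant G$ of prescribed rank that is hyperbolically embedded in $G$ (hence, by Theorem \ref{deepclprop} applied with the full subgroup $N_\lambda=H_\lambda$, is a Cohen--Lyndon subgroup), together with control on the quotient $G/\ll H\rr$. So the first step is to invoke that proposition with $n$ equal to the desired rank; this immediately hands us a free Cohen--Lyndon subgroup $H$ of rank $n$.

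Next I would verify that $G/\ll H\rr$ is non-elementary hyperbolic. This is part of the output of the group-theoretic Dehn filling machinery behind \cite{cios22b} (building on \cite{Osin07, DGO11, Sun}): quotienting a hyperbolic group by the normal closure of a suitable hyperbolically embedded subgroup yields again a hyperbolic group, and one can arrange the filling to be deep enough that non-elementarity is preserved — for instance by ensuring that a fixed finite subset of loxodromic elements of $G$ generating a non-elementary subgroup survives in the quotient. One must check that the statement of \cite[Proposition 4.21]{cios22b} (or its proof) indeed provides this; if it only asserts hyperbolicity, then one passes to a sufficiently deep filling, which is harmless since deeper fillings still give free Cohen--Lyndon subgroups of the same rank.

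For the torsion-free addendum, I would use the standard fact that in group-theoretic Dehn fillings of a torsion-free hyperbolic group $G$ along a torsion-free hyperbolically embedded collection, the quotient $G/\ll H\rr$ is torsion-free provided the filling is deep enough (cf. \cite[Theorem 7.19(f)]{DGO11} and the discussion in \cite{cios22b}): any torsion element of the quotient would lift, after deep filling, to a torsion element of $G$ or of a conjugate of $H$, and both are trivial since $G$ is torsion-free and $H$ is free. Since $H$ free of any finite rank is torsion-free, this applies, and again only requires taking $l$ large, which does not affect the other conclusions.

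The main obstacle I anticipate is purely bookkeeping: matching the precise hypotheses and conclusions of \cite[Proposition 4.21]{cios22b} to the clean statement here, and confirming that ``Cohen--Lyndon subgroup'' in the sense of Definition \ref{CLSun} is exactly what that reference produces (via hyperbolic embeddedness and Theorem \ref{deepclprop} with $N=H$, noting that the finite exceptional set $F$ from Theorem \ref{deepclprop} can be avoided by replacing $H$ with a finite-index — still free, still of rank $n$ after adjusting — or deeper variant). No genuinely new argument should be needed; the proposition is essentially a repackaging, with the torsion-free clause an immediate consequence of deep filling preserving torsion-freeness.
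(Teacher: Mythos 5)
Your overall approach matches the paper's: the paper gives no proof text for this proposition and simply presents it as a ``simplification of \cite[Proposition 4.21]{cios22b}.'' Citing that result and extracting what is needed is indeed all that is happening here, and your comments on non-elementarity and torsion-freeness of the quotient via deep Dehn filling are in line with the standard machinery.

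However, the parenthetical argument you insert to explain \emph{why} $H$ is a Cohen--Lyndon subgroup --- namely, invoke Theorem~\ref{deepclprop} ``with the full subgroup $N_\lambda = H_\lambda$'' --- does not work as stated. Theorem~\ref{deepclprop} only guarantees the Cohen--Lyndon property for normal subgroups $N_\lambda \ngroup H_\lambda$ satisfying $N_\lambda \cap F_\lambda = \emptyset$ for a finite set $F_\lambda \subset H_\lambda\setminus\{1\}$; taking $N_\lambda = H_\lambda$ forces $F_\lambda = \emptyset$, which the theorem does not guarantee. Your proposed repair --- replacing $H$ by a finite-index subgroup ``still of rank $n$ after adjusting'' --- fails for $n>1$: by the Nielsen--Schreier formula, a finite-index subgroup of a free group of rank $n$ has rank strictly larger than $n$, so the rank cannot be preserved this way. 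And ``a deeper variant'' of $H$ is a \emph{different} hyperbolically embedded subgroup with its own (a priori nonempty) exceptional set, so the circularity is not resolved. The actual content of \cite[Proposition 4.21]{cios22b} is that it constructs $H$ \emph{directly} as a Cohen--Lyndon subgroup (e.g.\ as a free product of suitably deep cyclic fillings of a hyperbolically embedded family of cyclic subgroups), rather than passing through hyperbolic embeddedness of $H$ itself plus Sun's theorem; note in particular that Lemma~\ref{Lem:CLC} in the present paper goes in the opposite direction from what you need, taking the Cohen--Lyndon property of the single subgroup $H$ as input and deducing it for a free product decomposition of $H$. So the cite-and-match plan is right, but the specific Sun's-theorem shortcut you sketched is a genuine gap.
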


Further, we record a simple observation.

\begin{lem}\label{Lem:CLC}
Suppose that $H$ is a Cohen--Lyndon subgroup of a group $G$ and $H=\Ast_{\lambda\in \Lambda} H_\lambda$. Then the collection $\Hl$ has the Cohen--Lyndon property in $G$.
\end{lem}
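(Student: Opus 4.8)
The plan is to show that the very transversal witnessing that $H$ is a Cohen--Lyndon subgroup of $G$ also witnesses, simultaneously for all $\lambda$, the Cohen--Lyndon property of the family $\Hl$; the only thing one does is expand each conjugate $tHt^{-1}$ using the free product decomposition $H=\Ast_{\lambda\in\Lambda}H_\lambda$.

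First I would unwind the definitions and the relevant normal closures. Since $H=\Ast_{\lambda\in\Lambda}H_\lambda$, the subgroup $H$ is generated by $\bigcup_{\lambda\in\Lambda}H_\lambda$, so, setting $N=\bigcup_{\lambda\in\Lambda}H_\lambda$ as in Definition \ref{CLSun}, we have $\ll N\rr=\ll H\rr$. Because each $H_\lambda$ is contained in $H\subseteq\ll H\rr$, one gets $H_\lambda\ll N\rr=H\ll H\rr=\ll H\rr$ for every $\lambda$, so the sets of left transversals that appear in Definition \ref{CLSun} all coincide: $LT(H_\lambda\ll N\rr,G)=LT(H\ll H\rr,G)$ for all $\lambda\in\Lambda$. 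By hypothesis there is $T\in LT(H\ll H\rr,G)$ with $\ll H\rr=\Ast_{t\in T}tHt^{-1}$; I would set $T_\lambda:=T$ for every $\lambda\in\Lambda$, which by the previous sentence is a legitimate choice of left transversal of $H_\lambda\ll N\rr$ in $G$.

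Next, since conjugation by $t$ is an automorphism of $G$, it carries the internal free product decomposition $H=\Ast_{\lambda\in\Lambda}H_\lambda$ to $tHt^{-1}=\Ast_{\lambda\in\Lambda}tH_\lambda t^{-1}$; substituting this into $\ll H\rr=\Ast_{t\in T}tHt^{-1}$ and regrouping the internal free product over the index set $T\times\Lambda$ yields
\begin{equation*}
\ll N\rr=\ll H\rr=\Ast_{t\in T}\left(\Ast_{\lambda\in\Lambda}tH_\lambda t^{-1}\right)=\Ast_{\lambda\in\Lambda}\left(\Ast_{t\in T_\lambda}tH_\lambda t^{-1}\right),
\end{equation*}
which is precisely \eqref{Eq:CL} with $N_\lambda=H_\lambda$. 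Hence the triple $(G,\Hl,\Hl)$ has the Cohen--Lyndon property, i.e.\ $\Hl$ has the Cohen--Lyndon property in $G$.

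I expect no real obstacle here: the only step that merits a sentence of justification is the regrouping of the internal free product, namely that if $K$ is the internal free product of a family $\{K_t\}_{t\in T}$ and each $K_t$ is in turn the internal free product of $\{K_{t,\lambda}\}_{\lambda\in\Lambda}$, then $K$ is the internal free product of $\{K_{t,\lambda}\}_{(t,\lambda)\in T\times\Lambda}$. This is immediate from the universal property of free products (or from the normal form theorem); everything else is bookkeeping with normal closures and transversals.
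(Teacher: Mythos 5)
Your proof is correct and takes essentially the same route as the paper's: pick the transversal $T$ witnessing that $H$ is a Cohen--Lyndon subgroup, expand each $tHt^{-1}$ via $H=\Ast_{\lambda}H_\lambda$, and regroup the internal free product over $T\times\Lambda$. You are a bit more explicit than the paper about identifying $LT(H_\lambda\ll N\rr,G)$ with $LT(\ll H\rr,G)$ and about the regrouping step, but no new idea is involved.
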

\begin{proof}
By the definition of a Cohen--Lyndon subgroup, there exists a left transversal $T\in LT(\ll H\rr, G)$ such that 

\begin{equation*}
    \ll H\rr = \Ast_{t\in T} tHt^{-1} = \Ast_{t\in T} \left( \Ast_{\lambda\in \Lambda} tH_\lambda t^{-1}\right).
\end{equation*}

\noindent Since $\ll \bigcup_{\lambda\in \Lambda} H_\lambda \rr=\ll H\rr$, we obtain (\ref{Eq:CL}).
\end{proof}

The next lemma generalizes \cite[Lemma 2.12]{cios22}.

\begin{lem}\label{Lem:A/N}
Let $\Lambda $ be a set, $\Al$ a collection of arbitrary groups and $W\in \WR (\Al, B)$. We identify each $A_\lambda $ with the subgroup $A_{\lambda,1}$ of the base of $W$. For any collection of normal subgroups $N_\lambda \ngroup A_\lambda$, we have 

\begin{equation*}
    W\left/\ll \bigcup_{\lambda\in \Lambda} N_\lambda \rr \right.\in \WR (\{ A_\lambda/N_\lambda\}_{\lambda\in \Lambda}, B).
\end{equation*}
\end{lem}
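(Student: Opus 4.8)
The plan is to produce a generalized wreath-like product structure on $\ol W := W/\ll \bigcup_\lambda N_\lambda\rr$ directly, by tracking what happens to the defining short exact sequence of $W$ under the quotient map. Write $N := \bigoplus_{\lambda\in\Lambda}\bigl(\bigoplus_{b\in B} A_{\lambda,b}\bigr)$ for the base of $W$, so that $1\to N\to W\xrightarrow{\e} B\to 1$ and $wA_{\lambda,b}w^{-1}=A_{\lambda,\e(w)b}$ for all $w\in W$. Set $M := \ll \bigcup_\lambda N_\lambda\rr$, the normal closure in $W$ of the union of the given $N_\lambda\ngroup A_\lambda = A_{\lambda,1}$. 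The first step is to identify $M$ explicitly: since each $N_\lambda$ is normal in $A_\lambda$ and $wA_{\lambda,1}w^{-1}=A_{\lambda,\e(w)}$, conjugating $N_\lambda$ by a word gives a subgroup of $A_{\lambda,\e(w)}$ depending only on $\e(w)$; concretely, if for $b\in B$ we pick any $w_b\in\e^{-1}(b)$ and set $N_{\lambda,b} := w_b N_\lambda w_b^{-1}\le A_{\lambda,b}$, one checks $N_{\lambda,b}$ is well-defined (independent of the choice of $w_b$, because $A_\lambda$ normalizes $N_\lambda$ and $P_{\lambda,1}\ge A_{\lambda,1}$ acts on $N_\lambda$ ... actually here one only needs that distinct lifts of $b$ differ by an element of $N$, and elements of $A_{\mu,c}$ with $(\mu,c)\ne(\lambda,b)$ commute with $A_{\lambda,b}$ while elements of $A_{\lambda,b}$ normalize $N_{\lambda,b}$). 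Hence $M = \bigoplus_{\lambda\in\Lambda}\bigl(\bigoplus_{b\in B} N_{\lambda,b}\bigr)$, a subgroup of $N$ that is normal in $W$.

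The second step is routine bookkeeping with the isomorphism theorems. Since $M\le N\ngroup W$, we get $\ol W/(N/M)\cong W/N\cong B$, i.e. a short exact sequence
\begin{equation*}
1\longrightarrow N/M\longrightarrow \ol W\stackrel{\ol\e}\longrightarrow B\longrightarrow 1,
\end{equation*}
where $\ol\e$ is induced by $\e$ (it descends because $M\le N=\ker\e$). Now
\begin{equation*}
N/M=\Bigl(\bigoplus_{\lambda,b}A_{\lambda,b}\Bigr)\Big/\Bigl(\bigoplus_{\lambda,b}N_{\lambda,b}\Bigr)\cong\bigoplus_{\lambda\in\Lambda}\Bigl(\bigoplus_{b\in B}A_{\lambda,b}/N_{\lambda,b}\Bigr),
\end{equation*}
and $A_{\lambda,b}/N_{\lambda,b}\cong A_\lambda/N_\lambda$ for every $b$ (using that $N_{\lambda,b}$ is the image of $N_\lambda$ under a chosen isomorphism $A_\lambda\xrightarrow{\sim}A_{\lambda,b}$). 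So condition (a) of Definition \ref{Def:GWRP} holds with $\ol A_\lambda := A_\lambda/N_\lambda$ and $\ol A_{\lambda,b} := A_{\lambda,b}/N_{\lambda,b}$.

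The third step is the conjugation/equivariance condition (b). Take $\ol w\in\ol W$, lift it to $w\in W$. For any $\lambda$ and $b$, inside $W$ we have $w A_{\lambda,b}w^{-1}=A_{\lambda,\e(w)b}$ and, since $w N_{\lambda,b}w^{-1}=N_{\lambda,\e(w)b}$ (this follows from the explicit description of $M$ as a direct sum of the $N_{\mu,c}$'s, which $W$ permutes exactly as it permutes the $A_{\mu,c}$'s), passing to the quotient gives $\ol w\,\ol A_{\lambda,b}\,\ol w^{-1}=\ol A_{\lambda,\ol\e(\ol w)b}$. This is exactly condition (b) for $B\curvearrowright B$ by left translation. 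Therefore $\ol W\in\WR(\{A_\lambda/N_\lambda\}_{\lambda\in\Lambda},B)$, as claimed.

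The only genuinely delicate point — and the one I would write out carefully — is the first step: verifying that the normal closure $M$ is precisely the internal direct sum $\bigoplus_{\lambda,b}N_{\lambda,b}$, with no "spillover." Normality of this direct sum under $W$ is immediate from the wreath-like permutation action, and it obviously contains each $N_\lambda$, so $M\subseteq\bigoplus_{\lambda,b}N_{\lambda,b}$ is clear; the reverse inclusion requires knowing that every $N_{\lambda,b}=w_bN_\lambda w_b^{-1}$ lies in $M$, which is immediate, and that these generate a direct sum rather than something larger — but they sit inside the base $N$, which is already a direct sum of the $A_{\lambda,b}$, so the $N_{\lambda,b}$ automatically form an internal direct sum. (This mirrors the argument for the $|\Lambda|=1$ case in \cite[Lemma 2.12]{cios22}, and no new difficulty arises from allowing a family.) Everything else is a diagram chase through the third isomorphism theorem.
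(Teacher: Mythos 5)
Your proposal is correct and follows essentially the same route as the paper's proof: define $N_{\lambda,b}$ by conjugating $N_\lambda$ by a lift of $b$, check independence of the lift (the paper phrases this simply as "$N_\lambda$ is normal in the base," which is the clean version of your corrected parenthetical), identify $M=\ll\bigcup_\lambda N_\lambda\rr$ with $\bigoplus_{\lambda,b}N_{\lambda,b}$, and then pass to the quotient short exact sequence and verify conditions (a) and (b). The only blemish is the minor notational slip in the well-definedness aside, where you write "$(\mu,c)\ne(\lambda,b)$" and "commute with $A_{\lambda,b}$" in a place where the object being conjugated is $N_\lambda\le A_{\lambda,1}$, so the relevant index should be $1$; this does not affect the substance of the argument.
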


\begin{proof}
For every $b\in B$ and $\lambda \in \Lambda$, we define $N_{\lambda, b}=uN_\lambda u^{-1}$, where $u$ is an element of $W$ such that

\begin{equation}\label{eq:eu=b}
\e(u)=b.
\end{equation}

\noindent Note that the subgroup $N_{\lambda, b}$ is independent of the choice of a particular element $u\in W$ satisfying (\ref{eq:eu=b}). Indeed, if $v\in W$ is another element such that $\e(v)=b$, then $u^{-1}v$ belongs to the base of $W$. Obviously, $N_\lambda $ is normal in the base. Therefore, $(u^{-1}v)N_\lambda (u^{-1}v)^{-1}=N_\lambda$, which implies $uN_\lambda u^{-1}=vN_\lambda v^{-1}$.

Clearly, $N_{\lambda,b}\lhd A_{\lambda, b}$ for all $b\in B$ and $\lambda \in \Lambda$. Further, let $M=\ll \bigcup_{\lambda\in \Lambda} N_\lambda \rr$. It is easy to see that $M=\bigoplus_{\lambda\in \Lambda}\left( \bigoplus_{b\in B} N_{\lambda, b}\right)$.  
Therefore, the group $W/M$ splits as

\begin{equation*}
    1\longrightarrow \bigoplus_{\lambda\in \Lambda}\left( \bigoplus_{b\in B} A_{\lambda, b}/N_{\lambda, b}\right) \longrightarrow  W/M\stackrel{\delta}\longrightarrow B\longrightarrow 1,
\end{equation*}

\noindent where $\delta $ is induced by the canonical homomorphism $W\to B$. It remains to note that we have $w(A_{\lambda, b}/N_{\lambda, b}) w^{-1}=A_{\lambda, \delta(w) b}/N_{\lambda, \delta(w) b}$ for all $w\in W/M$, all $\lambda\in \Lambda$, and all $b\in B$.
\end{proof}

The following result, which generalizes \cite[Theorem 4.24]{cios22b}, provides many examples of regular generalized wreath-like product groups.

\begin{thm}\label{Thm:Hyp-quot}
For any finite set $\Lambda $, any finite collection of finitely generated abelian groups $\Al$, and any non-elementary hyperbolic group $G$, there exists a quotient group $W$ of $G$ such that $W\in \WR(\Al, B)$, where $B$ is non-elementary hyperbolic. In addition, if $G$ is torsion-free, then so is $B$. Also, if $G$ has property (T) then so does $W$.
\end{thm}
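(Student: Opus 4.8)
The plan is to reduce the statement to Theorem \ref{Thm:Sun} (or equivalently Corollary \ref{sunconsequence}) by building, inside the non-elementary hyperbolic group $G$, a single finitely generated free subgroup $H$ that is Cohen--Lyndon and has the extra feature that $G/\ll H\rr$ is non-elementary hyperbolic (torsion-free if $G$ is). Concretely: write the finite collection as $\Al = \{A_1, \dots, A_m\}$, and for each $\lambda$ choose a finite presentation $A_\lambda \cong F_{n_\lambda}/R_\lambda$ with $F_{n_\lambda}$ free of rank $n_\lambda$ and $R_\lambda \ngroup F_{n_\lambda}$. Set $n = n_1 + \cdots + n_m$. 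First I would invoke Proposition \ref{Prop:CLS} to obtain a Cohen--Lyndon subgroup $H \leqslant G$ that is free of rank $n$, with $G/\ll H\rr$ non-elementary hyperbolic, and torsion-free when $G$ is.

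Next, fixing a free basis of $H$, write $H = H_1 \ast \cdots \ast H_m$ where $H_\lambda$ is free of rank $n_\lambda$; by Lemma \ref{Lem:CLC} the collection $\{H_\lambda\}_{\lambda=1}^m$ has the Cohen--Lyndon property in $G$. Now identify $H_\lambda$ with $F_{n_\lambda}$ and let $N_\lambda = R_\lambda \ngroup H_\lambda$ be the corresponding normal subgroup, so that $H_\lambda/N_\lambda \cong A_\lambda$ is abelian. Here is the one point that needs care: Theorem \ref{Thm:Sun} requires the \emph{triple} $(G, \{H_\lambda\}, \{N_\lambda\})$ to have the Cohen--Lyndon property, not merely the collection $\{H_\lambda\}$. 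I would handle this either by choosing $H$ via Proposition \ref{Prop:CLS} so that $H$ is in fact hyperbolically embedded in $(G,X)$ for a suitable $X$ — which the proof of that proposition should provide — and then applying Theorem \ref{deepclprop}, after first arranging the generators of $H_\lambda$ to avoid the finite exceptional sets $F_\lambda$ (replace the chosen free basis elements by high powers or by elements deep in $H_\lambda \setminus \{1\}$, which is harmless since we only need $H_\lambda/N_\lambda \cong A_\lambda$ and we have full freedom in presenting $A_\lambda$); alternatively, one checks directly that passing from a Cohen--Lyndon subgroup $H$ to the sub-triple data is automatic, since $\ll \bigcup N_\lambda \rr \leqslant \ll H \rr$ and the free-product decomposition of $\ll H\rr$ restricts. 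This verification is the main obstacle; everything else is bookkeeping.

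Granting the triple has the Cohen--Lyndon property, set $N = \ll \bigcup_{\lambda} N_\lambda \rr$ and note $A_\lambda \cong N_\lambda/[N_\lambda, N_\lambda]$ only after possibly choosing the $N_\lambda$ already abelian — but in fact $N_\lambda$ need not be abelian, so instead I would apply Theorem \ref{Thm:Sun} directly with the abelianizations: the theorem outputs $G/[N,N] \in \WR(\{N_\lambda/[N_\lambda,N_\lambda]\}_\lambda, \{G/N \curvearrowright I_\lambda\})$. To get exactly $\Al$ on the nose and a \emph{regular} wreath-like product, I would instead take $N_\lambda$ from the start to be the free abelian cover: replace $H_\lambda = F_{n_\lambda}$ by $F_{n_\lambda}/[F_{n_\lambda},F_{n_\lambda}] \cong \Z^{n_\lambda}$ only if $A_\lambda$ is free abelian, and in general use Lemma \ref{Lem:A/N} as the final step — first produce the free-abelian regular wreath-like product $W' \in \WR(\{\Z^{n_\lambda}\}, B)$ with $B = G/\ll H\rr$ via Theorem \ref{Thm:Sun} applied to $N_\lambda = [H_\lambda, H_\lambda]$, then quotient each base factor by the preimage of $R_\lambda \leqslant \Z^{n_\lambda}$ and invoke Lemma \ref{Lem:A/N} to conclude $W = W'/\ll \bigcup (R_\lambda\text{-images})\rr \in \WR(\{\Z^{n_\lambda}/R_\lambda\}, B) = \WR(\Al, B)$. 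Since $W$ is a quotient of $G$, property (T) passes to $W$ (property (T) is inherited by quotients), and $B = G/\ll H\rr$ is non-elementary hyperbolic, torsion-free when $G$ is, by Proposition \ref{Prop:CLS}. This completes the proof; I expect the only genuinely delicate step to be the Cohen--Lyndon bookkeeping in the second paragraph, and the cleanest route is to push the free-basis elements away from the finite exceptional sets and quote Theorem \ref{deepclprop} for the triple directly.
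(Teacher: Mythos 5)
Your overall route is the same as the paper's: use Proposition \ref{Prop:CLS} to produce a free Cohen--Lyndon subgroup $H$ of rank $n=\sum r_\lambda$ with $G/\ll H\rr$ non-elementary hyperbolic (torsion-free if $G$ is), split $H=\Ast_\lambda H_\lambda$ with $H_\lambda$ free of rank $r_\lambda$, invoke Lemma \ref{Lem:CLC}, feed the data into Theorem \ref{Thm:Sun} to get a regular wreath-like product with free abelian fibers, and finally quotient the fibers using Lemma \ref{Lem:A/N}. Property (T) passes to quotients. That is exactly the paper's proof.

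However, the middle of your argument goes wrong in a way that matters. Your ``one point that needs care'' is a non-issue, and the fix you eventually settle on is stated incorrectly. In Definition \ref{CLSun}, the Cohen--Lyndon property \emph{of the collection} $\{H_\lambda\}$ is, by convention, precisely the Cohen--Lyndon property of the triple $(G,\{H_\lambda\},\{N_\lambda\})$ with $N_\lambda=H_\lambda$. So once Lemma \ref{Lem:CLC} gives the collection the Cohen--Lyndon property, Theorem \ref{Thm:Sun} applies verbatim with $N_\lambda=H_\lambda$; no detour via high powers, no re-derivation of a hyperbolically embedded collection, and no separate ``restriction'' verification is needed (indeed your claim that the free-product decomposition of $\ll H\rr$ automatically restricts to a decomposition of $\ll \bigcup N_\lambda\rr$ for proper normal subgroups $N_\lambda\lneq H_\lambda$ is not obvious and is not how the paper proceeds). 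Then, with $N_\lambda=H_\lambda$, Theorem \ref{Thm:Sun} outputs $A_\lambda=N_\lambda/[N_\lambda,N_\lambda]=H_\lambda^{\mathrm{ab}}\cong\ZZ^{r_\lambda}$ and $I_\lambda=G/H_\lambda N=G/N$, i.e.\ a regular wreath-like product $G/[N,N]\in\WR(\{\ZZ^{r_\lambda}\},G/N)$, after which Lemma \ref{Lem:A/N} finishes the job.

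Your written instruction ``apply Theorem \ref{Thm:Sun} to $N_\lambda=[H_\lambda,H_\lambda]$'' is a genuine mistake, not a harmless typo: with that choice the theorem produces $A_\lambda=[H_\lambda,H_\lambda]/[[H_\lambda,H_\lambda],[H_\lambda,H_\lambda]]$, which is certainly not $\ZZ^{r_\lambda}$ (for $r_\lambda\geq 2$ the commutator subgroup of a free group has infinite rank), and in any case Lemma \ref{Lem:CLC} establishes the Cohen--Lyndon property only for the triple with $N_\lambda=H_\lambda$, not for $(G,\{H_\lambda\},\{[H_\lambda,H_\lambda]\})$. Replacing $[H_\lambda,H_\lambda]$ by $H_\lambda$ throughout that paragraph repairs the argument and recovers the paper's proof.
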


\begin{proof}
Let $G$ be a non-elementary hyperbolic group. For each $\lambda\in \Lambda$, we represent $A_\lambda $ as a quotient group $Z_\lambda/N_\lambda$, where $Z_\lambda \cong \ZZ^{r_\lambda}$.  Let $H$ be the Cohen--Lyndon subgroup of $G$ given by  Proposition \ref{Prop:CLS} such that $H$ is free of rank $n=\sum_{\lambda\in \Lambda} r_\lambda$. We have $H=\Ast_{\lambda\in \Lambda} H_{\lambda}$, where each $H_\lambda$ is a free group of rank $r_\lambda$. By Lemma \ref{Lem:CLC}, $\Hl$ satisfies the Cohen--Lyndon property in $G$. 

Letting $N=\ll H\rr = \ll \bigcup_{\lambda\in \Lambda}H_\lambda\rr $ and applying Theorem \ref{Thm:Sun}, we obtain that 

\begin{equation*}
    G/[N,N]\in \WR(\{Z_\lambda\}_{\lambda\in \Lambda}, G/N).
\end{equation*}

\noindent Now, applying Lemma \ref{Lem:A/N}, we obtain a quotient group of $G/[N,N]$ that belongs to the class $\WR(\{Z_\lambda/N_\lambda\}_{\lambda\in \Lambda}, G/N)=\WR(\Al, G/N)$. It remains to note that $G/N$ is torsion-free whenever so is $G$ by Proposition \ref{Prop:CLS}.
\end{proof}

\subsection{Central extensions of generalized wreath-like product groups} In the first part of this subsection, we introduce a canonical quotienting technique for wreath-like product groups, which yields large classes of nonsplit central extensions, many with property (T) (see \ref{TheoremD}). In the second part, we construct central extensions that additionally have a trivial outer automorphism group (see \ref{centralextwithOut=1}). Together, these constructions will serve as the main source of examples for the key results of our paper (e.g., \ref{TheoremA} and \ref{TheoremC}).

\begin{thm}\label{Thm:CenExt}
Let $\Lambda$ be an index set together with a finite subset $\Upsilon\subset \Lambda$, $B$ and $\{A_\lambda\}_{\lambda\in \Lambda} $ any groups,  also $\{B\curvearrowright I_\lambda\}_{\lambda\in\Lambda}$ a collection of transitive actions with finite stabilizers.  Let also $U\in \WR_0(\{ A_\lambda \}_\lambda, \{B\curvearrowright I_\lambda\}_{\lambda\in\Lambda})$. Suppose that for every $\lambda \in \Upsilon$, one of the following conditions holds:

\begin{enumerate}
    \item[(a)] $A_\lambda \cong \ZZ$; 
    \item[(b)] $A_\lambda $ is abelian and the action $B\curvearrowright I_\lambda$ is regular.
\end{enumerate} 

\noindent Then $U$  admits an epimorphism onto a central extension $Q$ of the form 

\begin{equation}\label{extension}
    1\to \bigoplus_{\lambda\in \Upsilon} A_\lambda  \to Q\to W\to 1,
\end{equation}

\noindent where $W\in \WR_0(\{A_\lambda \}_{\lambda\in \Lambda\setminus\Upsilon},\{B\curvearrowright I_\lambda\}_{\lambda \in \Lambda\setminus\Upsilon})$. Moreover, the associated 2-cocycle of the extension \eqref{extension} may be chosen to be trivial on $\oplus_{\lambda\in\Lambda\setminus\Upsilon}A^{(I_{\lambda})}$.
\end{thm}

\begin{proof}
Let $U\in \WR_0(\{ A_\lambda \}_\lambda, \{B\curvearrowright I_\lambda\}_{\lambda\in\Lambda})$. We fix a section $\sigma $ of the canonical homomorphism $\e\colon U\to B$. That is, $\sigma$ is any map $B\to U$ such that $\e\circ \sigma \equiv id_B$. Below we think of elements of $A_\lambda^{(I_\lambda)}\leqslant U$ as finitely supported functions $I_\lambda\to A_\lambda$ and define a map $\pi\colon \oplus_{\lambda \in \Upsilon} A_\lambda^{(I_\lambda)}\to \oplus_{\lambda \in \Upsilon}A_\lambda$ by the rule

\begin{equation*}
    \pi(\oplus_\lambda f_\lambda) =\prod_{\lambda\in \Upsilon,\: b\in B} \big(\sigma(b)^{-1} f_\lambda\sigma (b)\big) (i_\lambda)
\end{equation*}

\noindent for all $f_\lambda\in A_\lambda^{(I_\lambda)}$, where $i_\lambda\in I_\lambda$ is a fixed element. Note that $\pi(\oplus_\lambda f_{\lambda})$ is well-defined. Indeed, conjugation by $\sigma(b)$ defines an isomorphism between $A_{\lambda,b\cdot j }$ and $A_{\lambda,j}=\sigma(b)^{-1} A_{\lambda, b\cdot j}\sigma(b)$, for each $\lambda$. This isomorphism sends $f_{\lambda}(b\cdot i_\lambda)$ to $\sigma(b)^{-1} f_{\lambda}\sigma (b)(i_\lambda)$. Thus, $\sigma(b)^{-1} f_{\lambda}\sigma (b)(i_\lambda)\ne 0$ if and only if $f_{\lambda}(b\cdot i_\lambda)\ne 0$. Hence, since $\Upsilon$ is finite and the action $B\curvearrowright I_\lambda$ has finite stabilizers, the product $\prod_{\lambda\in\Upsilon, b\in B} \big(\sigma(b)^{-1} f_\lambda\sigma (b) (i_\lambda)\big)$ has only finitely many non-trivial terms. 

In addition, it is easy to see that $\pi$ is a homomorphism. Consider the following subgroup 

\begin{equation*}
    N=\Ker (\pi)=\{\oplus_{\lambda} f_{\lambda}\in \oplus_{\lambda \in \Upsilon} A_\lambda^{(I_\lambda)}\mid \pi(\oplus_{\lambda}f_{\lambda})=1\}.
\end{equation*}

\noindent We think of $N$ as a subgroup of $U$. For any $u\in U$ and any $b\in B$, we have $u\sigma (b)=a\sigma(ub)$ for some $a\in \Ker(\e)$. Since $\Ker(\e)\:\cap\: \oplus_{\lambda \in \Upsilon} A_\lambda^{(I_\lambda)}$ is abelian, we have $\sigma(b)^{-1} u^{-1}f_{\lambda}u\sigma (b)=\sigma(\varepsilon(u)b)^{-1}f_{\lambda}\sigma(\varepsilon(u)b)$ for all $f_{\lambda}\in A_\lambda^{(I_\lambda)}$, $\lambda\in\Upsilon$. Consequently,

\begin{align*}\label{rcl}
    \pi (u^{-1}\oplus_{\lambda\in\Upsilon}f_{\lambda}u) & = \prod_{\lambda\in\Upsilon,\: b\in B} \big(\sigma(b)^{-1} u^{-1}f_{\lambda}u\sigma (b) (i_\lambda)\big) \\
    &=\prod_{\lambda\in\Upsilon,\: b\in B} \big(\sigma(\varepsilon(u)b)^{-1} f_{\lambda}\sigma (\varepsilon(u)b) (i_\lambda)\big)\\
    &=\prod_{\lambda\in\Upsilon,\: b^\prime\in B} \big(\sigma(b^\prime )^{-1} f_{\lambda}\sigma (b^\prime) (i_\lambda)\big) = \pi (\oplus_{\lambda\in\Upsilon}f_{\lambda}),
\end{align*}

\noindent where $b^\prime =\varepsilon(u)b$. Thus, $N\lhd U$. It follows that $\pi$ extends to a map $\widehat\pi\colon \oplus_{\lambda\in \Upsilon}A_\lambda^{(I_\lambda)}\to U/N$.  

It is straightforward to verify that the image of $\oplus_{\lambda\in\Upsilon}A_\lambda^{(I_\lambda)}$ under $\widehat\pi$ is a central subgroup of $U/N$ isomorphic to $\oplus_{\lambda\in\Upsilon}A_\lambda^\prime$, where $A_\lambda^\prime$ is a subgroup of $A_\lambda $ for every $\lambda$. Given $\lambda\in \Upsilon$, consider a function $f_\lambda \colon I_\lambda \to A_\lambda=\mathbb Z$ such that $f_\lambda(i_\lambda)\ne 0$ and  $f_\lambda(i)=0$ for all $i\ne i_\lambda$. Using the fact that the generalized wreath product $U$ is untwisted, we obtain 

\begin{equation}\label{Eq:f}
\widehat\pi (f_\lambda)=\pi(f_\lambda)= f_\lambda(i_\lambda)^{|Stab_B(i_\lambda)|}
\end{equation}

\noindent (we use the multiplicative notation for the abelian groups $A_\lambda$ here.)
If condition (a) from the assumption of the theorem holds, equality (\ref{Eq:f}) implies that each $A_\lambda^\prime$ is non-trivial; therefore, $A_\lambda ^\prime \cong A_\lambda$ since every non-trivial subgroup of $\mathbb Z$ is isomorphic to $\mathbb Z$. If condition (b) holds, we have $\widehat\pi (f_\lambda)=f_\lambda$ by (\ref{Eq:f}), which obviously implies $A^\prime_\lambda =A_\lambda$. In either case, we have

$$\widehat\pi \left(\oplus_{\lambda\in\Upsilon}A_\lambda^{(I_\lambda)}\right) \cong \bigoplus_{\lambda\in \Upsilon} A_\lambda.$$ 

\noindent Using the isomorphism theorems, we obtain 

$$(U/N)\Big/\pi \left(\oplus_{\lambda\in\Upsilon}A_\lambda^{(I_\lambda)}\right) \in \WR_0(\{A_\lambda \}_{\lambda\in \Lambda\setminus\Upsilon},\{B\curvearrowright I_\lambda\}_{\lambda \in \Lambda\setminus\Upsilon}).$$

Finally, the extension splits over $\oplus_{\lambda\in\Lambda\setminus\Upsilon}A^{(I_{\lambda})}$ since $U\to Q$ is injective on $\oplus_{\lambda\in\Lambda\setminus\Upsilon}A^{(I_{\lambda})}$, and hence, the associated 2-cocycle may be chosen to vanish on this subgroup.
\end{proof}

\begin{proof}[\textbf{Proof of \ref{TheoremD}}]
Let $L$ be a uniform lattice in $Sp(n,1)$. By Selberg’s lemma \cite{selberg}, $L$ contains a torsion-free subgroup $K$ of finite index. Being a finite index subgroup of $L$, $K$ is also a uniform lattice in $Sp(n,1)$. Therefore, $K$ is hyperbolic and has property (T). By \cite[Corollary 3.24]{cios22b}, every non-cyclic, torsion-free hyperbolic group has a torsion-free, non-cyclic, hyperbolic quotient with trivial abelianization. Thus, $K$ has a torsion-free, non-cyclic, hyperbolic quotient group $G$ with trivial abelianization.

Let $U\in \WR(\{A,C\}, B)$ be the quotient group of $G$ provided by Theorem \ref{Thm:Hyp-quot}; in particular, $B$ is non-elementary, torsion-free, and hyperbolic. Applying Theorem \ref{Thm:CenExt}, we obtain an epimorphism from $U$ to a central extension $Q$ of the form $1\to C\to Q\to W\to 1$, where $W\in \WR(A,B)$ and $Z(Q)=C$. This proves (a). It remains to note that $Q$ satisfies (b) from the moreover part of the previous theorem, and it satisfies (c) being a quotient of $G$.
\end{proof}

In the following, we will focus on constructing central extensions with a trivial outer automorphism group. To do this, we first need the following elementary result concerning automorphisms of groups with trivial abelianization.

\begin{prop}\label{outcontrol1}
For any group $G$ with trivial abelianization, $\Out(G)$ is isomorphic to a subgroup of $\Out(G/Z(G))$. In particular, if $\Out(G/Z(G))$ is trivial, then so is $\Out(G)$.
\end{prop}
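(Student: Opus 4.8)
The plan is to exhibit a natural homomorphism $\Out(G)\to\Out(G/Z(G))$ and show it is injective when $G$ has trivial abelianization. First I would observe that $Z(G)$ is a characteristic subgroup of $G$, so every $\alpha\in\Aut(G)$ descends to an automorphism $\bar\alpha$ of $G/Z(G)$; this gives a homomorphism $\Phi\colon\Aut(G)\to\Aut(G/Z(G))$. Inner automorphisms of $G$ descend to inner automorphisms of $G/Z(G)$, so $\Phi$ induces $\bar\Phi\colon\Out(G)\to\Out(G/Z(G))$. The entire content of the proposition is therefore the injectivity of $\bar\Phi$, i.e.\ the claim that if $\alpha\in\Aut(G)$ induces an inner automorphism of $G/Z(G)$, then $\alpha$ is already inner on $G$.

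So suppose $\bar\alpha=\mathrm{Ad}(\bar g)$ for some $g\in G$; replacing $\alpha$ by $\mathrm{Ad}(g^{-1})\circ\alpha$, we may assume $\bar\alpha=\mathrm{id}$ on $G/Z(G)$, and the goal becomes to show $\alpha=\mathrm{id}$. For each $x\in G$ we then have $\alpha(x)x^{-1}\in Z(G)$, so the map $\chi\colon G\to Z(G)$, $\chi(x)=\alpha(x)x^{-1}$, is well defined. A direct computation using that the values lie in the center shows $\chi(xy)=\alpha(xy)(xy)^{-1}=\alpha(x)\alpha(y)y^{-1}x^{-1}=\alpha(x)\chi(y)x^{-1}=\chi(y)\alpha(x)x^{-1}=\chi(x)\chi(y)$, so $\chi$ is a group homomorphism from $G$ to the abelian group $Z(G)$. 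But any homomorphism from $G$ to an abelian group factors through the abelianization $G/[G,G]$, which is trivial by hypothesis; hence $\chi$ is trivial, i.e.\ $\alpha(x)=x$ for all $x$. This proves $\alpha$ was inner (namely trivial modulo the $\mathrm{Ad}(g)$ we removed), establishing injectivity of $\bar\Phi$, and the final sentence of the proposition is the immediate specialization to $\Out(G/Z(G))=\{1\}$.

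The only genuinely delicate point — really just a bookkeeping check rather than an obstacle — is making sure $\chi$ is multiplicative; the key trick is that $\chi$ takes values in the center, which lets one commute $\alpha(x)x^{-1}$ past $\alpha(y)y^{-1}$ freely. Everything else (characteristicity of $Z(G)$, well-definedness of the descended maps, compatibility with inner automorphisms) is routine and can be stated in a sentence each. I would write the argument in the reduced form above, since starting from $\bar\alpha=\mathrm{id}$ rather than $\bar\alpha=\mathrm{Ad}(\bar g)$ keeps the computation cleanest.
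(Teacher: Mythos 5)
Your proof is correct and follows essentially the same approach as the paper: define the natural map $\Out(G)\to\Out(G/Z(G))$, reduce injectivity to the case $\widehat\alpha=\mathrm{id}$ on the central quotient, and use $G=[G,G]$ to conclude $\alpha=\mathrm{id}$. The only cosmetic difference is that where the paper writes an arbitrary $g$ as a product of commutators and checks directly that the central factors $z_{a_i}, z_{b_i}$ cancel, you package the same observation more abstractly by showing $\chi(x)=\alpha(x)x^{-1}$ is a homomorphism $G\to Z(G)$ that must factor through the trivial abelianization.
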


\begin{proof} We define a map $\e\colon \Out(G)\to \Out(G/Z(G))$ as follows. Let $\alpha \in \Aut(G)$. Since $\alpha (Z(G))= Z(G)$, the automorphism $\alpha$ induces an automorphism $\widehat\alpha \in \Aut(G/Z(G))$ by the rule $\widehat\alpha (gZ(G))=\alpha (g)Z(G)$ for all $g\in G$. Further, we let 

$$\e(\alpha \Inn(G))=\widehat \alpha \Inn(G/Z(G)).$$ 

\noindent It is straightforward to verify that $\e$ is a well-defined homomorphism.

We want to show that $\e$ is injective. To this end, assume that $\e(\alpha \Inn(G))= \Inn(G/Z(G))$. It suffices to show that $\alpha \Inn(G)=\Inn(G)$. By definition, we have $\widehat \alpha\in \Inn (G/Z(G))$. Replacing $\alpha$ with another automorphism in $\alpha \Inn (G)$ if necessary, we can assume that $\widehat\alpha$ is the identity map. That is, for every $g\in G$, we have 

 \begin{equation}\label{Eq:a(g)}
     \alpha (g)= gz_g
 \end{equation}
 
\noindent for some $z_g\in Z(G)$. Since $G=[G,G]$, for every $g\in G$, there exist $n\in \mathbb N$ and $a_1, b_1, \ldots, a_n, b_n\in G$ such that $g=[a_1, b_1]\cdots[a_n, b_n]$. Using (\ref{Eq:a(g)}), we obtain
 
$$
 \alpha (g)= \alpha([a_1, b_1]\cdots[a_n, b_n])= [a_1z_{a_1}, b_1z_{b_1}]\cdots[a_nz_{a_n}, b_nz_{b_n}]= [a_1, b_1]\cdots[a_n, b_n] =g.
$$
 
\noindent Thus, $\alpha \Inn(G)=\Inn(G)$ and $\e$ is injective. 
\end{proof}

For further use, we also note the following consequence of the previous result.

\begin{cor}\label{trivout} Let $W \in {\mathcal WR}(A, B\curvearrowright I)$ be a property (T) group where $A$ is abelian and $B\curvearrowright I$ has infinite orbits. Consider a central extension $1\to Z(G)\to G\to W\to 1$, where $G$ has property (T) and trivial abelianization. Then we have

\begin{equation*}
    |{\rm Out}(G)|\le |{\rm Out}(W)|\le |{\rm Out}(B)|.
\end{equation*}

\noindent In particular, if ${\rm Out}(B)=\{1\}$ or ${\rm Out}(W)=\{1\}$ then ${\rm Out}(G)=\{1\}$.
\end{cor}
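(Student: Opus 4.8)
The corollary is the conjunction of two inequalities, $|\Out(G)|\le|\Out(W)|$ and $|\Out(W)|\le|\Out(B)|$, and the plan is to prove them separately. The first is immediate: the exact sequence $1\to Z(G)\to G\to W\to 1$ identifies $W$ with $G/Z(G)$, and $G$ has trivial abelianization by hypothesis, so Proposition~\ref{outcontrol1} applies verbatim and gives an embedding $\Out(G)\hookrightarrow\Out(G/Z(G))=\Out(W)$. I record one consequence of this setup for later use: $W=G/Z(G)$ is again perfect, since $[G,G]=G$ forces $[W,W]=W$.

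\textbf{Reducing the second inequality.} Let $N=\bigoplus_{i\in I}A_i$ be the base of the wreath-like product $W\in\mathcal{WR}(A,B\curvearrowright I)$ and $\e\colon W\to B$ its canonical homomorphism (we may assume $A\neq\{1\}$, the other case being trivial). The crucial structural point is that $N$ is \emph{characteristic} in $W$. Since $A$ is abelian, $N$ is abelian and normal, and the hypothesis that $B\curvearrowright I$ has infinite orbits lets one single out $N$ by an automorphism-invariant condition on centralizers: for $w\in W\setminus N$ one has $\e(w)\neq 1$ and $C_W(w)\cap N=\{1\}$, so $C_W(w)$ embeds into $C_B(\e(w))$, whereas $C_W(n)\supseteq N$ for every $n\in N$. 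This is the kind of argument used to isolate the base of a wreath-like product in \cite{cios22}; equivalently, $N$ is the amenable radical of $W$ once $B$ is known to have trivial amenable radical. Granting this, $\e$ induces a homomorphism $\Aut(W)\to\Aut(B)$ that carries $\Inn(W)$ into $\Inn(B)$, hence a homomorphism $\Theta\colon\Out(W)\to\Out(B)$, and it remains to prove $\Theta$ injective.

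\textbf{Injectivity of $\Theta$.} Let $\phi\in\Aut(W)$ have trivial image in $\Out(B)$; after composing with an inner automorphism of $W$ we may assume $\e\circ\phi=\e$. Then $\phi$ restricts to a $\mathbb{Z}[B]$-module automorphism $\alpha$ of $N$ (conjugation makes $N$ a $\mathbb{Z}[B]$-module through $\e$, and $\phi$ is equivariant because it fixes $\e$), and writing $\phi(w)=w\,c(w)$ defines a map $c\colon W\to N$ with $c(vw)=\big({}^{\,w^{-1}}c(v)\big)c(w)$. I would finish in two steps. First, $\alpha=\mathrm{id}_N$: the module $N$ is a direct sum, over the $B$-orbits of $I$, of modules induced from point stabilizers with free abelian coefficients, so its group of $\mathbb{Z}[B]$-module automorphisms is severely restricted; moreover a lift of such an $\alpha$ to an automorphism of $W$ inducing the identity on $B$ exists only if $\alpha$ fixes the class of the extension $1\to N\to W\to B\to 1$ in $H^2(B,N)$, a class that is nonzero because $W$ is perfect, and one argues that this forces $\alpha=\mathrm{id}_N$. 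Second, with $\alpha=\mathrm{id}_N$ the map $c$ vanishes on $N$ and is therefore inflated from a class in $H^1(B,N)$, which vanishes: $W$, hence $B$, has property~(T), so the orthogonal $B$-representation $N\otimes_{\mathbb{Z}}\mathbb{R}$ satisfies $H^1(B,N\otimes_{\mathbb{Z}}\mathbb{R})=0$, and as $A$ is free abelian this forces $H^1(B,N)=0$. Hence $c$ is a coboundary, $\phi\in\Inn(W)$, and $\Theta$ is injective. Chaining the two inequalities yields $|\Out(G)|\le|\Out(W)|\le|\Out(B)|$, and the final sentence of the corollary is the special case $\Out(B)=\{1\}$ (or $\Out(W)=\{1\}$).

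\textbf{Main obstacle.} The first inequality and the formal construction of $\Theta$ are routine; the substance lies in the injectivity of $\Theta$, and within it in the first step above — showing that an automorphism fixing the quotient $B$ must already act trivially on the base $N$. This is where one must balance the rigidity of $N$ as a $\mathbb{Z}[B]$-module against the constraint imposed by the extension class together with the property~(T) vanishing of $H^1$, and it is the step I expect to be the most delicate. A secondary point requiring care is the verification that $N$ is genuinely characteristic, rather than merely abelian and normal, which leans on the structure theory of wreath-like products.
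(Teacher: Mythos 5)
Your first inequality is exactly what the paper does: invoke Proposition~\ref{outcontrol1}, which requires only that $G$ have trivial abelianization, to embed $\Out(G)$ into $\Out(G/Z(G))=\Out(W)$. For the second inequality, the paper's entire proof is to cite \cite[Lemma~4.26, Proposition~4.27 and Corollary~6.9]{cios22b} ``verbatim,'' so there is no self-contained argument in the paper to compare against; you are reconstructing a proof sight unseen, and your overall architecture (show the base $N=A^{(I)}$ is characteristic, deduce a homomorphism $\Theta\colon\Out(W)\to\Out(B)$, prove injectivity by splitting an element of $\ker\Theta$ into its action $\alpha$ on $N$ and a $1$-cocycle $c\colon B\to N$, kill the cocycle via property~(T) vanishing of $H^1$) is indeed the right shape and matches the spirit of the cited results.

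There are, however, two real problems. First, the centralizer characterization you offer for $N$ being characteristic is false as stated: for $w\in W\setminus N$ with $\e(w)=b\neq 1$, it is not true that $C_W(w)\cap N=\{1\}$ in general, since $b$ may fix points of $I$ and (in the untwisted case) the corresponding copies of $A$ lie in $C_W(w)$. The amenable-radical characterization you fall back on is the correct mechanism, but it silently uses that $B$ has trivial amenable radical; that holds when $B$ is an ICC subgroup of a hyperbolic group (the standing context of the paper), but it is not among the stated hypotheses of the corollary, and you should say you are importing it. Second, and more seriously, the step ``$\alpha=\mathrm{id}_N$'' is not a detail to be waved at: it is the heart of the second inequality, and the sketch you give (that $\alpha$ must fix the extension class in $H^2(B,N)$, ``and one argues this forces $\alpha=\mathrm{id}_N$'') does not constitute an argument. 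Fixing the extension class only controls the image of $\alpha$ on the cap-product image in the coinvariants $N_B$; passing from triviality on $N_B$ to triviality on all of $N$ requires a concrete analysis of $\mathbb{Z}[B]$-module endomorphisms of $A^{(I)}$ and is precisely the delicate content of the cited \cite[Lemma~4.26, Proposition~4.27]{cios22b}. You flag this as the main obstacle, which is honest, but until it is filled the proposal does not prove the second inequality; the $H^1$-vanishing part that you do carry out only disposes of the cocycle $c$ after $\alpha$ has been shown to be trivial.
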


\begin{proof} The first inequality follows from Proposition \ref{outcontrol1}. The second inequality follows using verbatim the arguments from \cite[Lemma 4.26, Proposition 4.27 and Corollary 6.9]{cios22b}. We leave the details to the reader.    
\end{proof}

With these preparations at hand we are now ready to prove the other main result of this section.

\vspace{1mm}

\begin{proof}[\textbf{Proof of \ref{centralextwithOut=1}}] 
Let $U\in\mathcal{WR}_0(\{A_j\}^{n+1}_{j=1},\{B\curvearrowright I_j\}_{j=1}^{n+1})$ be a property (T) group satisfying all the conditions from Theorem \ref{trivoutgenwreathlike} for $m=n+1$. Applying Theorem \ref{Thm:CenExt} to $U$ and the sets $\Upsilon=\{1,\ldots,n\}
\subset \{1,\ldots,n+1\}= \Lambda$, one obtains a central extension 

$$1\to \mathbb Z^n\to Q\to W\to 1$$ 

\noindent that splits over the subgroup $A_{n+1}^{(I_{n+1})}< W$. Since $W \in \WR( A_{n+1}, B\curvearrowright I_{n+1})$, where $A_{n+1}\cong \mathbb Z$, $B$ is ICC hyperbolic and the action $B \curvearrowright I_{n+1}$ is transitive with finite stabilizers, $W$ is ICC by \cite[Lemma 4.11(b)]{cios22}. Hence, we have $Z(W)=\{ 1\}$, and therefore $Z(Q)\cong \mathbb Z^n$. 
Since $U$ has property (T) and trivial abelianization, then so is its quotient $Q$.  Finally, since ${\rm Out}(B)=\{1\}$, by Corollary \ref{trivout} we have ${\rm Out}(Q)=\{1\}$. \end{proof}

We end the section with the following independent result that will be used in the von Neumann algebraic part of the paper.

\begin{lem}\label{finiteindex[W,W]} Let $1 \ra Z(G)\ra G\overset{\varepsilon}{\ra} W\ra 1$ be a central extension, where $G$ has property (T). Let $\sigma\colon  W\ra G $ be a section with $\varepsilon\circ \sigma =id_W$ and denote by $c\colon  W \times W \ra Z(G)$ the natural $2$-cocycle associated with this extension, i.e. $\sigma(g)\sigma(h)= c(g,h)\sigma(gh)$ for all $g,h\in W$. Let $W_0\leqslant W$ be a finite index subgroup and consider the subgroup of $Z(G)$ generated by $D=\langle c(g,h)\,:\, g,h\in W_0\rangle\leqslant Z(G)$. Then the subgroup generated by $\langle D, \sigma(W_0) \rangle \leqslant G$ has finite index.
\end{lem}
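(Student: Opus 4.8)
The plan is to realize $G_0:=\langle D,\sigma(W_0)\rangle$ inside the chain $G_0\leqslant \varepsilon^{-1}(W_0)\leqslant G$, where $\varepsilon^{-1}(W_0)$ already has index $[W:W_0]<\infty$ in $G$, and then to show $[\varepsilon^{-1}(W_0):G_0]<\infty$. First I would pin down $G_0$ explicitly: since $D\subseteq Z(G)$ is central in $G$ and $c(g,h)\in D$ for all $g,h\in W_0$, a direct check using the relation $\sigma(g)\sigma(h)=c(g,h)\sigma(gh)$ shows that $\{d\,\sigma(w):d\in D,\ w\in W_0\}$ is closed under products and inverses and contains $D\cup\sigma(W_0)$; hence $G_0=\{d\,\sigma(w):d\in D,\ w\in W_0\}$. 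In particular $\varepsilon(G_0)=W_0$, $G_0\cap Z(G)=D$, and $\varepsilon^{-1}(W_0)=G_0\,Z(G)$.

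The heart of the argument is to pass to $G/D$, which is legitimate because $D\trianglelefteq G$. Let $\bar\varepsilon\colon G/D\to W$ be the induced homomorphism and $\bar\sigma$ the composition of $\sigma$ with the projection $G\to G/D$. The assumption $c(W_0\times W_0)\subseteq D$ says exactly that $\bar\sigma|_{W_0}\colon W_0\to G/D$ is a group homomorphism, i.e.\ a section of $\bar\varepsilon$ over $W_0$ with image $G_0/D$; hence $\bar\varepsilon$ restricts to an isomorphism $G_0/D\xrightarrow{\ \sim\ }W_0$. Since $Z(G)/D$ is central in $G/D$, while $(G_0/D)\cap(Z(G)/D)=\{1\}$ and $(G_0/D)(Z(G)/D)=\varepsilon^{-1}(W_0)/D$, we obtain an internal direct product decomposition
\[
\varepsilon^{-1}(W_0)/D\ \cong\ W_0\times \big(Z(G)/D\big).
\]

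Now property (T) finishes the argument: $\varepsilon^{-1}(W_0)$ has finite index in $G$, hence has property (T); therefore so does its quotient $\varepsilon^{-1}(W_0)/D$, and hence so does its direct factor $Z(G)/D$. As $Z(G)/D$ is abelian, property (T) forces it to be finite, so $[Z(G):D]<\infty$, and the tower law yields $[G:G_0]=[W:W_0]\cdot[Z(G):D]<\infty$. The one genuinely nontrivial point is the direct product decomposition above: the subgroup $D$ is defined precisely so that the restricted central extension $1\to Z(G)/D\to \varepsilon^{-1}(W_0)/D\to W_0\to 1$ is not merely split but an internal direct product. This is what upgrades ``$Z(G)/D$ is a central subgroup of a property (T) group'' (which gives nothing, since property (T) groups can have infinite center) to ``$Z(G)/D$ is a direct factor of a property (T) group'', which together with abelianness does force finiteness. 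Everything else is routine bookkeeping with indices.
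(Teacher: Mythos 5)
Your argument is correct and amounts to the same strategy as the paper's proof: realize $Z(G)/D$ as an abelian quotient of the finite-index (hence property (T)) subgroup $\varepsilon^{-1}(W_0)$ and conclude it is finite, then apply the tower law. The paper gets the key isomorphism in one line via the second isomorphism theorem, namely $\varepsilon^{-1}(W_0)/\langle D,\sigma(W_0)\rangle \cong Z(G)/D$; your extra step of establishing the internal direct product $\varepsilon^{-1}(W_0)/D\cong W_0\times \bigl(Z(G)/D\bigr)$ is a correct refinement but not logically necessary, since for the finiteness conclusion it suffices that $Z(G)/D$ be a quotient of a property (T) group rather than a direct factor.
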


\begin{proof} Since $W_0\leqslant W$ has finite index then so does $ \varepsilon^{-1}(W_0)\leqslant G$. In particular, $\varepsilon^{-1}(W_0)$ has property (T). Denote by $Q=\langle D, \sigma(W_0) \rangle$ and notice that $\varepsilon^{-1}(W_0)/Q\cong Z(G)/D$. Thus $\varepsilon^{-1}(W_0)/Q$ is abelian and has property (T), which means it is finite. Therefore, $Q\leqslant \varepsilon^{-1}(W_0)$ has finite index and hence $Q\leqslant G$ has finite index as well.
\end{proof}

\section{Preliminaries on von Neumann algebras}\label{section3}

\subsection{Intertwining techniques} In \cite[Theorem 2.1, Corollary 2.3]{popa06} Popa introduced the following  weak-mixing like criterion for deciding existence of an intertwining  between von Neumann subalgebras of a given tracial von Neumann algebra. 

\begin{thm} Let $(\cM,\tau)$ be a tracial von Neumann algebra with $\mathcal{P},\mathcal{Q}\subseteq \cM$ von Neumann subalgebras where the inclusions are not necessarily unital. Then, the following are equivalent:

\begin{enumerate}
    \item[(1)] There exists non-zero projections $p\in \mathcal{P}$, $q\in\mathcal{Q}$, a $*$-homomorphism $\Theta\colon p\mathcal{P}p\to q\mathcal{Q} q$ and a non-zero partial isometry $v\in q\cM p$ such that $\Theta(x)v=vx$, for all $x\in p\mathcal{P}p$.
    \item[(2)] There exists no net $u_n\in\mathcal{P}$ satisfying $\|\mathbb{E}_{\mathcal{Q}}(x^*u_ny)\|_2\to 0$, for all $x,y\in\cM$. 
\end{enumerate}
\end{thm}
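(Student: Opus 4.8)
The plan is to follow Popa's original route, factoring the equivalence through the intermediate condition
$(\ast)$: \emph{there is a nonzero projection $f\in\langle\mathcal{M},e_{\mathcal{Q}}\rangle$ with $\mathrm{Tr}(f)<\infty$ that commutes with $\mathcal{P}$.}
Here $\langle\mathcal{M},e_{\mathcal{Q}}\rangle$ is the Jones basic construction of $\mathcal{Q}\subseteq\mathcal{M}$, $\mathrm{Tr}$ its canonical semifinite trace, normalized by $\mathrm{Tr}(xe_{\mathcal{Q}}y)=\tau(xy)$ and $e_{\mathcal{Q}}xe_{\mathcal{Q}}=\mathbb{E}_{\mathcal{Q}}(x)e_{\mathcal{Q}}$ for $x,y\in\mathcal{M}$, and $\|\cdot\|_{2,\mathrm{Tr}}$, $\langle S,T\rangle=\mathrm{Tr}(S^*T)$ denote the norm and inner product of $L^2(\langle\mathcal{M},e_{\mathcal{Q}}\rangle,\mathrm{Tr})$. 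One uses freely that finite sums $\sum_i x_ie_{\mathcal{Q}}y_i$ with $x_i,y_i\in\mathcal{M}$ lie in, and are $\|\cdot\|_{2,\mathrm{Tr}}$-dense in, this $L^2$-space; for nonunital inclusions one carries the units $1_{\mathcal{P}},1_{\mathcal{Q}}$ through the bookkeeping, reading the net in (2) as a net of unitaries of $\mathcal{P}$. I would prove $(1)\Leftrightarrow(\ast)$ and $(\ast)\Leftrightarrow(2)$ and then chain them.

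The equivalence $(\ast)\Leftrightarrow(2)$ is the soft half. For $(\ast)\Rightarrow(2)$: given $f$ as in $(\ast)$, suppose some net of unitaries $u_n\in\mathcal{P}$ had $\|\mathbb{E}_{\mathcal{Q}}(x^*u_ny)\|_2\to 0$ for all $x,y\in\mathcal{M}$; approximating $f$ by a finite sum $\sum_i x_ie_{\mathcal{Q}}y_i$ in $\|\cdot\|_{2,\mathrm{Tr}}$ and expanding $\mathrm{Tr}(u_nfu_n^*f)$, each cross term equals $\mathrm{Tr}(u_nx_ie_{\mathcal{Q}}y_iu_n^*x_je_{\mathcal{Q}}y_j)=\tau\big(u_nx_i\,\mathbb{E}_{\mathcal{Q}}(y_iu_n^*x_j)\,y_j\big)$, which tends to $0$ by Cauchy--Schwarz, so $\mathrm{Tr}(u_nfu_n^*f)\to 0$; but $u_nfu_n^*=f$ makes this the constant $\mathrm{Tr}(f)>0$, a contradiction, and hence (2) holds. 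For the contrapositive of $(2)\Rightarrow(\ast)$: fix a finite $F\subseteq\mathcal{M}$ and $\varepsilon>0$, and assume no $u\in\mathcal{U}(\mathcal{P})$ satisfies $\max_{x,y\in F}\|\mathbb{E}_{\mathcal{Q}}(x^*uy)\|_2<\varepsilon$. Setting $T=\sum_{x\in F}xe_{\mathcal{Q}}x^*\in L^2(\langle\mathcal{M},e_{\mathcal{Q}}\rangle)_+$, a short computation with $e_{\mathcal{Q}}ae_{\mathcal{Q}}=\mathbb{E}_{\mathcal{Q}}(a)e_{\mathcal{Q}}$ gives $\langle uTu^*,T\rangle=\sum_{x,y\in F}\|\mathbb{E}_{\mathcal{Q}}(x^*uy)\|_2^2\ge\varepsilon^2$ for every $u\in\mathcal{U}(\mathcal{P})$. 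Let $a$ be the unique element of minimal $\|\cdot\|_{2,\mathrm{Tr}}$-norm in the $\|\cdot\|_{2,\mathrm{Tr}}$-closed convex hull of $\{uTu^*:u\in\mathcal{U}(\mathcal{P})\}$; uniqueness forces $uau^*=a$ for all such $u$, so $a$ commutes with $\mathcal{P}$, while continuity of $\langle\,\cdot\,,T\rangle$ gives $\langle a,T\rangle\ge\varepsilon^2$, so $a\neq 0$. Since $0\le a=a^*$ lies in the $L^2$-space, a spectral projection $f=\mathbf{1}_{[c,\infty)}(a)$ for small $c>0$ is a nonzero projection commuting with $\mathcal{P}$ and with $\mathrm{Tr}(f)\le c^{-2}\mathrm{Tr}(a^2)<\infty$, contradicting the negation of $(\ast)$. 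Thus for each $(F,\varepsilon)$ a suitable unitary exists, and indexing by the directed set of pairs $(F,\varepsilon)$ yields the net in (2).

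The step I expect to be the real obstacle is $(1)\Leftrightarrow(\ast)$, where all the analytic weight sits. The direction $(1)\Rightarrow(\ast)$ is the easier one: extending the partial isometry $v$ of (1) to a partial isometry inside $\langle\mathcal{M},e_{\mathcal{Q}}\rangle$ yields a nonzero finite-$\mathrm{Tr}$ projection dominated by a finite sum of copies of $e_{\mathcal{Q}}$, and averaging its $\mathcal{U}(\mathcal{P})$-conjugates by the minimal-norm trick above upgrades it to a $\mathcal{P}$-central one. The harder direction $(\ast)\Rightarrow(1)$ uses the structure of the basic construction: a finite-$\mathrm{Tr}$ projection is subequivalent to $\sum_{i=1}^{n}v_ie_{\mathcal{Q}}v_i^*$ for suitable $v_i\in\mathcal{M}$, and from $f$ being $\mathcal{P}$-central one extracts a nonzero right-$\mathcal{Q}$-bounded vector $\xi\in L^2(\mathcal{M})$ with $\mathcal{P}\xi\subseteq\overline{\xi\mathcal{Q}}^{\,\|\cdot\|_2}$; polar-decomposing the bounded right-$\mathcal{Q}$-module map $x\mapsto\xi x$ and cutting down by an appropriate central projection of $\mathcal{Q}$ then produces $p,q,\Theta,v$ as in (1). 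Chaining $(1)\Leftrightarrow(\ast)\Leftrightarrow(2)$ gives $(1)\Leftrightarrow(2)$. As this is exactly \cite[Theorem 2.1, Corollary 2.3]{popa06}, in the final text I would present at most this sketch and otherwise refer to that source.
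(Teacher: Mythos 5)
The paper does not prove this statement; it is quoted verbatim as a black box from \cite[Theorem 2.1, Corollary 2.3]{popa06}, so there is no proof in the paper to compare against. Your sketch correctly reproduces Popa's standard route through the intermediate condition on a $\mathcal P$-central finite-trace projection in $\langle \mathcal M, e_{\mathcal Q}\rangle$ (the inner-product computation $\langle uTu^*,T\rangle=\sum\|\mathbb E_{\mathcal Q}(x^*uy)\|_2^2$, the convex-hull/minimal-norm averaging, and the extraction of a right-$\mathcal Q$-bounded vector), and your closing instinct to state only a sketch and cite Popa is exactly what the paper does.
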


If (1) or (2) hold, we write $\mathcal{P}\prec_{\cM}\mathcal{Q}$ and say that \textit{a corner of $\mathcal{P}$ embeds into $\mathcal{Q}$ inside $\cM$.} If $\mathcal{P}p'\prec_{\cM}\mathcal{Q}$ for any nonzero projection $p'\in\mathcal{P}'\cap \cM$, we write $\mathcal{P}\prec_{\cM}^{\rm s}\mathcal{Q}$.

\subsection{Twisted group von Neumann algebras}\label{twistedvNdef} A \emph{cocycle action} of a group $G$ on a tracial von Neumann algebra $(\cM,\tau)$ is a pair $(\alpha,c)$ consisting of two maps $\alpha\colon G\to {\text{Aut}}(\cM)$ and $c\colon G\times G\to\mathscr{U}(\cM)$ which satisfy 

\begin{enumerate}
    \item[(1)] $\alpha_g\alpha_h={\text {Ad}}(c(g,h))\alpha_{gh}$ for every $g,h\in G$,
    \item[(2)] $c(g,h)c(gh,k)=\alpha_g(c(h,k))c(g,hk)$ for every $g,h,k\in G$, and
    \item[(3)] $c(g,e)=c(e,g)=1$ for every $g\in G$.
\end{enumerate} 

\noindent Let $G \curvearrowright^{\alpha,c} (\mathcal M,\tau)$ be a cocycle action.  The \emph{cocycle crossed product} von Neumann algebra, denoted by $\cM\rtimes_{\alpha,c}G$, is a tracial von Neumann algebra which is generated by a copy of $\cM$ and unitary elements $\{u_g\}_{g\in G}$ such that $u_gxu_g^*=\alpha_g(x)$, $u_gu_h=c(g,h)u_{gh}$, and $\tau(xu_g)=\tau(x)\delta_{g,e}$ for every $g,h\in G$ and $x\in \cM$.

Let $\mathcal M=\mathbb C$, let $\alpha\colon G\to\C$ be the trivial action and let $c\colon G\times G\to \mathbb{T}$ be a 2-cocycle, i.e.\ $c(g,h)c(gh,k)=c(h,k)c(g,hk)$, for all $g,h,k\in G$. Then, $G\curvearrowright^{\alpha,c}\C$ is a cocycle action and the corresponding cocycle crossed product $\C\rtimes_{\alpha,c}G$ is called the \emph{twisted group von Neumann algebra} $\cL_c(G)$. This is a tracial von Neumann algebra generated by the group unitaries $\{u_g\}_{g\in G}$, satisfying $u_gu_h=c(g,h)u_{gh}$ and $\tau(u_g)=\delta_{g,e}$ for all $g,h\in G$.

\begin{rem}\label{cocyclecohtonormal} For a 2-cocycle $c\colon G\times G\to\mathbb{T}$ we can assume $c(g,g^{-1})=1$ for all $g\in G$. To see this, define $f\colon G\to \mathbb{T}$ by $f(g)=c(g,g^{-1})^{-1/2}=f(g^{-1})$. In this case, $c'(g,h):=c(g,h)f(g)f(h)f(gh)^{-1}$ is cohomologous to $c$ and $c'(g,g^{-1})=1$ for all $g\in G$. In particular, this gives that $c'(g,h)=\overline{c'(h^{-1},g^{-1})}$ for all $g,h\in G$.
\end{rem}

\subsection{Direct integral decompositions} We use the theory of direct integrals of von Neumann algebras with separable predual, referencing \cite[Section IV.8]{takesaki} and \cite[Section 6]{cfqt23} for the requisite background information. 

For $\cM$ a von Neumann algebra with separable predual, and $(X,\mu)$ a standard probability space with $L^{\infty}(X,\mu)\subseteq\mathscr{Z}(\cM)$, we have a direct integral decomposition $\cM=\int_X^{\oplus}\cM_xd\mu(x)$. When the decomposition is taken over the center, the fibers $\cM_x$ are factors almost everywhere.

We will apply direct integral decompositions in the case of group von Neumann algebras $\cL(G)$ coming from central extensions $1\ra Z\ra G \overset{\e}{\ra} G_0\ra 1$ where $Z\leqslant Z(G)$. We denote by $c\colon  G_0\times G_0\ra Z$ the $2$-cocycle satisfying $\sigma(g)\sigma(h)=c(g,h)\sigma(gh)$ for all $g,h\in G_0$ for $\sigma\colon  G_0\ra G$ a section with $\e\circ\sigma=id_{G_0}$. Let $(\hat{Z},\mu)$ be the dual of $Z$ with its Haar measure and identify $\cL(Z)=L^{\infty}(\hat{Z},\mu)$. Following \cite[Proposition 6.6]{cfqt23} we can find a measurable field of cocycles $c_x\colon G_0\times G_0\to\mathbb{T}$ ``fibering'' the von Neumann algebraic $2$-cocycle induced by $c$ in such a way that

\begin{equation*}
    \cL(G)=\int_{\hat{Z}}^{\oplus}\cL_{c_x}(G_0)d\mu(x).
\end{equation*}

\section{Proof of \ref{strongsuperrig}}\label{section4}

\noindent In this section, we present our rigidity result for twisted group factor von Neumann algebras, which builds upon and extends the techniques developed in \cite[Theorem 1.3]{cios22} and \cite[Theorem C]{cfqt23}. Before proceeding, we review some notation and key preliminary results.

Let $G$ and $H$ be groups, and let $d\colon G\times G\to\mathbb{T}$ and $c\colon  H \times H \to \mathbb T$ be $2$-cocycles. From Remark \ref{cocyclecohtonormal}, assume $d$ satisfies $d(g,h)=\overline{d(h^{-1},g^{-1})}$ for all $g,h\in G$. Suppose that $\cL_d(G)^t=\cL_{c}(H)$ for some $t>0$. Next we recall the notion of triple commultiplication from \cite{ioa10} (see also \cite{DV24,cfqt23}). This is the $\ast$-embedding $\Delta_0\colon \cL_{c}(H)\to \cL_{c}(H)\:\overline{\otimes}\:\cL_{c}(H)^{\rm op}\:\overline{\otimes}\:\cL_{c}(H)$ given by 

\begin{equation}\label{3commul}
    \Delta_0(v_h)= \overline{c(h^{-1},h)}\: v_h\otimes \overline v_{h^{-1}}\otimes v_h\text{ for }h\in H.
\end{equation} 

\begin{rem} Let $n$ be the smallest integer such that $n\geq t$. Denote by $\cM=\cL_d(G)$ and define $\tilde{\cM}:=\cM\:\overline{\otimes}\:\cM^{\text{op}}\:\overline{\otimes}\:\cM\:\overline{\otimes}\:\mathbb{M}_n(\C)\:\overline{\otimes}\:\mathbb{M}_n(\C)^{\text{op}}$. Then, the map $\Delta_0$ can be amplified to a unital $*$-homomorphism $\Delta\colon \cM\to p\tilde{\cM}p$, where $p\in\tilde{\cM}$ is a projection with $(\tau\otimes\tau^{\text{op}}\otimes\tau\otimes \text{Tr}\otimes\text{Tr}^{\text{op}})(p)=t^2$. We make the construction of $\Delta$ explicit as in \cite[Remark 4.9]{cios22}. 

Assume that $t\in\N$ so that $t=n$, $p=1$ and $\cL_c(H)=\cM\:\overline{\otimes}\:\mathbb{M}_n(\C)$. Let $\psi\colon \tilde{\cM}\:\overline{\otimes}\:\mathbb{M}_n(\C)\to \cM\:\overline{\otimes}\:\mathbb{M}_n(\C)\:\overline{\otimes}\:\cM^{\text{op}}\:\overline{\otimes}\:\mathbb{M}_n(\C)^{\text{op}}\:\overline{\otimes}\:\cM\:\overline{\otimes}\:\mathbb{M}_n(\C)$ be the $*$-isomorphism given by 

\begin{equation*}
    \psi(a\otimes b\otimes c\otimes d\otimes e\otimes f)=a\otimes d\otimes b\otimes e\otimes c\otimes f.
\end{equation*}

\noindent Let $U$ be a unitary in $\cM\:\overline{\otimes}\:\mathbb{M}_n(\C)\:\overline{\otimes}\:\cM^{\text{op}}\:\overline{\otimes}\:\mathbb{M}_n(\C)^{\text{op}}\:\overline{\otimes}\:\cM\:\overline{\otimes}\:\mathbb{M}_n(\C)$ such that $\Delta_0(1_{\cM}\otimes x)=U\psi(1_{\tilde{\cM}}\otimes x)U^*$, for every $x\in\mathbb{M}_n(\C)$. Then,

\begin{equation}\label{Delta0}
    \psi^{-1}\circ\text{Ad}(U^*)\circ\Delta_0\colon \cM\:\overline{\otimes}\:\mathbb{M}_n(\C)\to\tilde{\cM}\:\overline{\otimes}\:\mathbb{M}_n(\C)
\end{equation}

\noindent is a unital $*$-homomorphism leaving $1\:\overline{\otimes}\:\mathbb{M}_n(\C)$ fixed, so it can be written as $\Delta\otimes\text{Id}_n$ where $\Delta\colon \cM\to\tilde{\cM}$. Thus, $\Delta_0=\text{Ad}(U)\circ\psi\circ(\Delta\otimes \text{Id}_n)$.
\end{rem}

In the proof of \ref{strongsuperrig}, we will combine \eqref{Delta0} with some properties of the triple commultiplication: 

\begin{equation}\label{flipproperty}
\begin{split}&(\Delta_0\otimes\text{id}\otimes\text{id})\circ \Delta_0=(\text{id}\otimes\text{id}\otimes\Delta_0)\circ \Delta_0\text{ and } \\
&F_{1,2}\circ(\Delta_0\otimes\text{id}\otimes\text{id})\circ \Delta_0=F_{3,4}\circ(\text{id}\otimes\text{id}\otimes\Delta_0)\circ \Delta_0.
\end{split}\end{equation}

\noindent Here $F$ is the star-flip map $F\colon  \mathcal L_c(H)\:\overline{\otimes}\:\mathcal L_c(H)^{\rm op} \rightarrow \cL_c(H)\:\overline{\otimes}\: \cL_c(H)^{\rm op}$ given by $F(x\otimes \overline{y})=y^*\otimes\overline{x^*}$, 
$F_{1,2}=F\otimes \text{id}\otimes \text{id}\otimes \text{id}$ and $F_{3,4}=\text{id}\otimes \text{id}\otimes F\otimes \text{id}$ (see \cite[Section 8.2]{cfqt23}).


\begin{lem}\label{intertwiningpropofcom} If $\Delta\colon \cM\to p\tilde{\cM}p$ is as above, then the following hold:

\begin{enumerate}
    \item[(a)] $\Delta(\cM)\not\prec_{\tilde{\cM}}\mathcal{Q}\:\overline{\otimes}\:\cM^{\text{op}}\:\overline{\otimes}\:\cM\:\overline{\otimes}\:\mathbb{M}_n(\C)\:\overline{\otimes}\:\mathbb{M}_n(\C)^{\text{op}},\:\mathcal{M}\:\overline{\otimes}\:\cM^{\text{op}}\:\overline{\otimes}\:\mathcal{Q}\:\overline{\otimes}\:\mathbb{M}_n(\C)\:\overline{\otimes}\:\mathbb{M}_n(\C)^{\text{op}}$ for any von Neumann subalgebra $\mathcal{Q}\subset\cM$ such that $\cM\not\prec_{\cM}\mathcal{Q}$. Similarly, $\Delta(\cM)\not\prec_{\tilde{\cM}}\cM\:\overline{\otimes}\:\mathcal{Q}\:\overline{\otimes}\:\cM\:\overline{\otimes}\:\mathbb{M}_n(\C)\:\overline{\otimes}\:\mathbb{M}_n(\C)^{\text{op}}$ for any von Neumann subalgebra $\mathcal{Q}\subset \cM^{\text{op}}$ with $\cM^{\text{op}}\not\prec_{M^{\text{op}}}\mathcal{Q}$.
    
    \item[(b)] $\Delta(\mathcal{Q})\not\prec\cM\:\overline{\otimes}\:1\:\overline{\otimes}\:\cM\:\overline{\otimes}\:\mathbb{M}_n(\C)\:\overline{\otimes}\:\mathbb{M}_n(\C)$, $\Delta(\mathcal{Q})\not\prec\cM\:\overline{\otimes}\:\cM^{\text{op}}\:\overline{\otimes}\:1\:\overline{\otimes}\:\mathbb{M}_n(\C)\:\overline{\otimes}\:\mathbb{M}_n(\C)$ and $\Delta(\mathcal{Q})\not\prec1\:\overline{\otimes}\:\cM^{\text{op}}\:\overline{\otimes}\:\cM\:\overline{\otimes}\:\mathbb{M}_n(\C)\:\overline{\otimes}\:\mathbb{M}_n(\C)$, for any diffuse von Neumann subalgebra $\mathcal{Q}\subset\cM$.
    
    \item[(c)] If $\mathcal{H}\subset L^2(p\tilde{\cM}p)$ is a $\Delta(\cM)$-sub-bimodule which is right finitely generated, then we have $\mathcal{H}\subset L^2(\Delta(\cM))$. 
\end{enumerate}
\end{lem}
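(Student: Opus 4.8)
The plan is to prove the three parts of Lemma \ref{intertwiningpropofcom} following closely the strategy of \cite[Lemma 4.10]{cios22} (and the twisted-factor adaptation in \cite[Section 8]{cfqt23}), exploiting that the triple comultiplication $\Delta_0$ is built from the group unitaries by the explicit formula \eqref{3commul}, so that intertwining obstructions reduce to statements about the group $H$ and its action, together with the relations \eqref{Delta0} and \eqref{flipproperty}.

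\textbf{Part (a).} First I would assume, for contradiction, that $\Delta(\cM)\prec_{\tilde\cM}\cQ\:\overline\otimes\:\cM^{\mathrm{op}}\:\overline\otimes\:\cM\:\overline\otimes\:\mathbb M_n(\C)\:\overline\otimes\:\mathbb M_n(\C)^{\mathrm{op}}$ for some $\cQ\subseteq\cM$ with $\cM\prec_\cM\cQ$. Using \eqref{Delta0} this translates into an intertwining statement for $\Delta_0$ itself (the matrix amplification is harmless). Then I would apply the associativity/flip identities \eqref{flipproperty}: composing once more with $\Delta_0$ on a suitable leg and using that $F_{1,2}\circ(\Delta_0\otimes\mathrm{id}\otimes\mathrm{id})\circ\Delta_0=F_{3,4}\circ(\mathrm{id}\otimes\mathrm{id}\otimes\Delta_0)\circ\Delta_0$, one obtains that a corner of $\cM$ embeds into $\cQ$ in a way incompatible with $\cM\prec_\cM\cQ$ unless $\cM\prec_\cM\cQ$ is ``full'' — the standard trick is that $\Delta_0$ is ``mixing relative to the diagonal'', so the only subalgebra into which $\Delta(\cM)$ can intertwine inside the first leg is one that already contains a full corner of $\cM$, forcing $\cM\prec_\cM^{\mathrm s}\cQ$; combined with $\cM\prec_\cM\cQ$ (strictly, not strongly) this is the contradiction. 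The argument for the third leg is symmetric, and for the $\cM^{\mathrm{op}}$-leg one uses the star-flip $F$ to swap legs $1$ and $2$ and reduces to the same computation with $\cM^{\mathrm{op}}\not\prec\cQ$.

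\textbf{Part (b).} For the three non-embedding statements with $1$ in one leg, I would use that $\Delta_0(v_h)=\overline{c(h^{-1},h)}\,v_h\otimes\overline v_{h^{-1}}\otimes v_h$ has \emph{nontrivial} components in the remaining two legs simultaneously: if $\Delta(\cQ)\prec\cM\:\overline\otimes\:1\:\overline\otimes\:\cM\:\overline\otimes\cdots$ for a diffuse $\cQ$, then after unpacking via \eqref{Delta0} one would get a sequence of unitaries in $\cQ$ whose images under $h\mapsto v_h\otimes\overline v_{h^{-1}}\otimes v_h$ stay ``close'' to the subalgebra with trivial middle leg; but the middle leg of $\Delta_0(v_h)$ is $\overline v_{h^{-1}}$, which escapes to $0$ in $2$-norm as $h\to\infty$ along any sequence in a diffuse subalgebra — contradiction. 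The other two cases are identical by symmetry of the three legs of $\Delta_0$. (One must be slightly careful that the matrix units $\mathbb M_n(\C)\:\overline\otimes\:\mathbb M_n(\C)$ do not affect diffuseness; this is routine.)

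\textbf{Part (c).} Here I would argue exactly as in \cite[Lemma 4.10(c)]{cios22}: let $\cH\subseteq L^2(p\tilde\cM p)$ be a $\Delta(\cM)$-subbimodule that is finitely generated as a right $p\tilde\cM p$-module (equivalently, finite right $\Delta(\cM)$-dimension after cutting appropriately). Using part (b) (more precisely, the mixing properties of $\Delta_0$ that part (b) encodes), the $\Delta(\cM)$-$\Delta(\cM)$ bimodule generated by any vector in $\cH$ is, by a standard averaging/popa-intertwining argument, contained in the coarse-free part unless the vector already lies in $L^2(\Delta(\cM))$; finite right-dimension then forces $\cH\subseteq L^2(\Delta(\cM))$. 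Concretely: if $\cH\not\subseteq L^2(\Delta(\cM))$, decompose off the piece orthogonal to $L^2(\Delta(\cM))$ and produce, via the relative mixing of $\Delta_0$ against the legs of $\tilde\cM$, a sub-bimodule that is infinitely generated on the right, contradicting finite generation.

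\emph{Main obstacle.} I expect the delicate point to be part (c), and more specifically making precise the ``relative mixing'' of the twisted comultiplication $\Delta_0$ — in the untwisted case this is \cite[Lemma 4.10]{cios22}, but one must check that the scalar $2$-cocycle factors $\overline{c(h^{-1},h)}$ and the normalization $c(g,h)=\overline{c(h^{-1},g^{-1})}$ from Remark \ref{cocyclecohtonormal} do not disturb the bimodule/intertwining computations; since these are scalars of modulus one they should only contribute harmless phase factors, but the bookkeeping (especially keeping the $\mathbb M_n(\C)$-amplifications and the conjugation by $U$ from \eqref{Delta0} consistent) is where care is needed. A secondary technical nuisance is ensuring all the ``$\prec$'' statements are interpreted with the correct (non-strong vs.\ strong) meaning, since the contradiction in part (a) hinges precisely on upgrading $\prec$ to $\prec^{\mathrm s}$.
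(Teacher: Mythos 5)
Your high-level plan — adapt \cite[Lemma 8.5]{cfqt23} for parts (a), (b) and \cite[Lemma 4.10(c)]{cios22} (equivalently \cite[Proposition 7.2.3]{ipv10}) for part (c) — is exactly what the paper does; in fact the paper gives no more detail than that citation. Part (c) of your proposal is therefore fine as a pointer, and your remark that the scalar cocycle factors only contribute modulus-one phases and hence do not disturb the intertwining computations is the correct observation for why the untwisted proofs carry over.

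However, the argument you actually sketch for part (a) is wrong. The flip/associativity identities \eqref{flipproperty} play no role in this lemma; in the paper they are used only much later, inside Claim \ref{assumemapsareidentity} of the proof of Theorem F, to show the three group homomorphisms $\delta_1,\delta_2,\delta_3$ are conjugate. The correct proof of (a) is a one-step conditional-expectation computation: since the hypothesis on $\cQ$ is $\cM\not\prec_\cM\cQ$ (the ``$\cM\prec_\cM\cQ$'' printed in the statement is evidently a typo — with $\prec$ the lemma fails already for $\cQ=\cM$, and the clause about $\cM^{\rm op}$ is phrased with $\not\prec$), there is a net of unitaries $u_m\in\cU(\cM)$ with $\|\mathbb E_\cQ(x u_m y)\|_2\to 0$ for all $x,y\in\cM$; expanding $\Delta_0(u_m)$ in the Fourier basis $\{v_h\}_h$ and using that $\mathbb E_{\cQ\overline\otimes\cM^{\rm op}\overline\otimes\cM\overline\otimes\mathbb M_n\overline\otimes\mathbb M_n^{\rm op}}$ factors as $\mathbb E_\cQ$ on the first tensor leg and the identity on the others (the conjugating unitary $U$ from \eqref{Delta0} lives in the legs being preserved), one checks directly that $\|\mathbb E(x\Delta(u_m)y)\|_2\to 0$ for $x,y\in\tilde\cM$. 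Your proposed contradiction — deriving ``$\cM\prec^{\rm s}_\cM\cQ$'' and claiming it is incompatible with ``$\cM\prec_\cM\cQ$ (strictly, not strongly)'' — is not a contradiction: $\prec^{\rm s}$ implies $\prec$, so the two statements are compatible, and no such dichotomy is what drives the lemma.

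Part (b) is closer, but the key sentence is incorrect as written: $\|\overline v_{h^{-1}}\|_2=1$ for every $h$, so it does not ``escape to $0$ in $2$-norm.'' The correct mechanism is that the conditional expectation onto the leg carrying $\C 1$ sends $\overline v_{h^{-1}}\mapsto \delta_{h,e}$; for $u_m\in\cU(\cQ)$ tending to $0$ weakly one has $\tau(v_{h^{-1}}u_m)\to 0$ for each fixed $h$, and this, combined with the fact that the conditional expectation kills every nontrivial group element in the $\C 1$ leg, yields the vanishing of $\|\mathbb E(x\Delta(u_m)y)\|_2$. So the idea is right, but you should replace ``escapes in $2$-norm'' with ``is killed by the conditional expectation onto $\C 1$ for $h\neq e$, while the Fourier weights tend to zero.''
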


The results in (a) and (b) of the prior statement are obtained by adapting \cite[Lemma 8.5]{cfqt23} to the case of an isomorphism between two twisted group von Neumann algebras. For part (c), the result can be readily adapted to our setting from \cite[Lemma 4.10(c)]{cios22} (see also \cite[Proposition 7.2.3]{ipv10}).

We also recall the notion of height first defined in \cite[Section 4]{ioa10} and \cite[Section 3]{ipv10} and adapted to twisted group von Neumann algebras in  \cite[Section 4]{DV24}. Given a countable group $H$, a 2-cocycle $c\colon H\times H\to\mathbb{T}$ and a subgroup $\mathcal{G}\leqslant \cL_c(H)$, define 

\begin{equation*}
    h_H(\mathcal{G})=\inf_{w\in \mathcal{G}}\left(\max_{h\in H}|\tau(v_{h^{-1}}w)|\right).
\end{equation*}

\noindent In the following statement, we refer to \cite[Lemma 8.6]{cfqt23} and note that its proof can be adapted
to the case of an isomorphism between two twisted group von Neumann algebras.

\begin{lem}\label{positiveheight} Let $G$ and $H$ be countable groups, $d\colon G\times G\to\mathbb{T}$ and $c\colon H\times H\to\mathbb{T}$ be 2-cocycles. Assume $\cM=\cL_d(G)=\cL_c(H)$, let $\Delta$ be as above and assume $n=1$ in the above definition of $\widetilde{\mathcal M}$. If there exist two nonzero elements $x,y \in \widetilde{\mathcal M}$ such that $\Delta(u_g) x =y(u_g\:\otimes\overline{u}_{g^{-1}}\otimes u_g)$ for all $g\in G$, then $h_H(\{u_g:g\in G\})>0$.
\end{lem}

For further use, we also recall the following natural action associated with wreath-like products as defined in \cite[Section 8.1]{cfqt23}. Assume $A$ is a free abelian group, $B\curvearrowright I$ is a faithful action with infinite orbits and let $G\in\mathcal{WR}(A,B\curvearrowright I)$. Denote by $\varepsilon\colon G\to B$ the canonical quotient map for the wreath-like product $G$, and note that we have an action $G\curvearrowright I$ given by $g\cdot i=\varepsilon(g)\cdot i$. Let $d\colon G\times G\to\mathbb{T}$ be a 2-cocycle and define an action $G\curvearrowright^{\gamma}\cL(A^{(I)})$ by 

\begin{equation}\label{actionwithcocycle}
    \gamma_g(\otimes_{i\in I}a_i)=d(g,(a_i)_i)d(g(a_i)_i,g^{-1}) \otimes_{i\in I}a_{g^{-1}\cdot i},
\end{equation}

\noindent for $g\in G$ and $(a_i)_i\in A^{(I)}$. By assumption, we take $d$ to be trivial on $A^{(I)}$, and hence this action naturally extends to an action of the quotient group $B\curvearrowright\cL(A^{(I)})$. 

We note in passing that the action $\sigma$ is \textit{built over} $G\curvearrowright I$, as defined in \cite[Definition 2.5]{kv15}; specifically, it satisfies $\sigma_g(\cL(A^i))=\cL(A^{g\cdot i})$. This property will play a role in the proof of \ref{strongsuperrig}.

\vspace{2mm}

\begin{lem}[{\cite[Lemma 8.1]{cfqt23}}]\label{weakmixandfree} The actions $\gamma$ and $\gamma_{|_B}$ are weak mixing and free.
\end{lem}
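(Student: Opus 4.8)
The plan is to make the $G$--system $\gamma$ completely concrete and then read off both properties from elementary facts about Pontryagin duals. Let $\widehat A$ denote the Pontryagin dual of $A$; since $A$ is a nontrivial free abelian group, $\widehat A$ is an infinite compact abelian group and $\widehat{A^{(I)}}=\widehat A^{\,I}$, so that $\cL(A^{(I)})\cong L^\infty(\widehat A^{\,I},m)$, where $m$ is the product of the normalized Haar measures and the group unitaries $\{u_f\}_{f\in A^{(I)}}$ form an orthonormal Fourier basis of $L^2$. Using that $d$ is trivial on the base $A^{(I)}$ and, as in Remark~\ref{cocyclecohtonormal}, that $d(g,g^{-1})=1$, one checks that the displayed formula defines a genuine (untwisted) trace--preserving action of $G$ on $\cL(A^{(I)})$ which on Fourier modes reads $\gamma_g(u_f)=\omega(g,f)\,u_{g\cdot f}$, where $g\cdot f\in A^{(I)}$ is the $\varepsilon(g)$--shift of $f$ (so $\Supp(g\cdot f)=\varepsilon(g)\cdot\Supp f$) and $\omega(g,\cdot)\in\widehat{A^{(I)}}=\widehat A^{\,I}$ is a character of $A^{(I)}$. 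Dually, $\gamma_g$ is implemented by the affine measure--preserving transformation $T_g$ of $\widehat A^{\,I}$ whose linear part is the coordinate shift $S_g\colon(\chi_i)_i\mapsto(\chi_{\varepsilon(g)^{-1}\cdot i})_i$ and whose translation part is $\omega(g,\cdot)\in\widehat A^{\,I}$. In particular $T_g$ depends only on $\varepsilon(g)$, which is exactly why $\gamma$ factors through the $B$--action $\gamma_{|_B}$; it thus suffices to prove the two properties for $\gamma_{|_B}$ (for $\gamma$ itself, freeness is meant relative to the kernel $A^{(I)}$, i.e.\ as essential freeness of the induced $B$--action, equivalently freeness of $\gamma_g$ whenever $\varepsilon(g)\neq e$).

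For weak mixing I would use the standard reformulation: a probability--measure--preserving action is weakly mixing iff the diagonal action on $\widehat A^{\,I}\times\widehat A^{\,I}$ is ergodic. Expand an invariant vector $\xi\in L^2(\widehat A^{\,I}\times\widehat A^{\,I})$ in the Fourier basis $\{u_f\otimes\overline{u}_{f'}\}_{(f,f')\in A^{(I)}\times A^{(I)}}$; since $\gamma_g(u_f)=\omega(g,f)u_{g\cdot f}$, invariance of $\xi$ forces $|\widehat\xi(f,f')|$ to be constant along the $B$--orbits of $A^{(I)}\times A^{(I)}$ (the unimodular phases are immaterial for moduli), so $\|\xi\|_2<\infty$ forces $\widehat\xi$ to be supported on the \emph{finite} $B$--orbits. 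The key point is that the only finite $B$--orbit is $\{(e,e)\}$: if $(f,f')\neq(e,e)$, say $f\neq e$, then $\mathrm{Stab}_B((f,f'))\subseteq\mathrm{Stab}_B(f)\subseteq\mathrm{Stab}_B(\Supp f)$, and $\Supp f$ is a nonempty finite subset of $I$ whose $B$--orbit (in the space of finite subsets of $I$) is infinite --- since all $B$--orbits on $I$ are infinite, a finite union of $|\Supp f|$--element subsets of $I$ cannot contain the infinite $B$--orbit of any point $i_0\in\Supp f$. Hence $\mathrm{Stab}_B(\Supp f)$ has infinite index, so the $B$--orbit of $(f,f')$ is infinite. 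Therefore $\xi$ is constant, the diagonal action is ergodic, and $\gamma$ (equivalently $\gamma_{|_B}$) is weakly mixing.

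For freeness fix $b\in B$ with $b\neq e$. The set $\mathrm{Fix}(T_b)$ is either empty or a coset of the fixed--point subgroup of the linear part $S_b$. Now $\mathrm{Fix}(S_b)=\{\chi\in\widehat A^{\,I}:\chi\text{ is constant on each }\langle b\rangle\text{--orbit of }I\}=\prod_{O\in I/\langle b\rangle}\Delta_O(\widehat A)$, a closed subgroup of $\widehat A^{\,I}=\prod_{O}\widehat A^{\,O}$. Since $B\curvearrowright I$ is faithful and $b\neq e$, some $\langle b\rangle$--orbit $O_0$ satisfies $|O_0|\geq 2$, and as $\widehat A$ is infinite the quotient $\widehat A^{\,O_0}/\Delta_{O_0}(\widehat A)\cong\widehat A^{\,|O_0|-1}$ is infinite; hence $\mathrm{Fix}(S_b)$ has infinite index in $\widehat A^{\,I}$, and a closed subgroup of a compact group is Haar--null exactly when it has infinite index. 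Consequently $m(\mathrm{Fix}(T_b))=0$, so $T_b$, and hence $\gamma_b$, acts freely; running this over all $b\neq e$ gives freeness of $\gamma_{|_B}$, and freeness of $\gamma$ in the sense above follows because $\gamma_g=T_{\varepsilon(g)}$ whenever $\varepsilon(g)\neq e$.

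The only genuinely delicate point is the bookkeeping around the $2$--cocycle $d$: one must verify that the displayed formula really defines an honest (untwisted) trace--preserving $G$--action on $\cL(A^{(I)})$ that permutes the Fourier modes up to unimodular scalars --- this uses triviality of $d$ on $A^{(I)}$ together with the normalization $d(g,g^{-1})=1$ --- and one must notice that these scalars are irrelevant both for the orbit count in the weak--mixing step (they do not move Fourier modes, only rescale coefficients) and for the freeness step (they merely translate $\mathrm{Fix}(S_b)$ to a coset, which is still Haar--null). Everything else is routine.
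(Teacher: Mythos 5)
The paper offers no proof of this lemma — it is quoted verbatim from \cite[Lemma 8.1]{cfqt23} — so there is no internal argument to compare against. Your proof is correct, and it is the expected spectral--combinatorial argument: pass to the Pontryagin dual $\widehat{A}^{\,I}$; note that the $2$-cocycle $d$ contributes only unimodular phases, so that $\gamma_g$ is implemented by an affine measure--preserving transformation $T_g$ of $\widehat{A}^{\,I}$ whose linear part is the coordinate shift $S_{\varepsilon(g)}$; deduce weak mixing from ergodicity of the diagonal action, which reduces (by square--summability of Fourier coefficients) to the statement that the only finite $B$-orbit on $A^{(I)}\times A^{(I)}$ is $\{(e,e)\}$, and this in turn follows from the ``infinite orbits'' hypothesis on $B\curvearrowright I$; and deduce freeness from the observation that for $b\neq e$ the set $\mathrm{Fix}(T_b)$ is a coset of $\mathrm{Fix}(S_b)$, a closed subgroup of $\widehat{A}^{\,I}$ that has infinite index (via faithfulness of $B\curvearrowright I$ and nontriviality of $A$) and is therefore Haar--null. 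You have correctly isolated the two standing hypotheses (infinite orbits, faithfulness) at the exact points where each is needed. You are also right that ``$\gamma$ is free'' must be read modulo the kernel $A^{(I)}$, since $\gamma$ factors through $B$ and so cannot be literally free when $A^{(I)}\neq\{e\}$; indeed, the proof of Theorem F only ever invokes freeness of $\gamma_{|_B}$ on $\mathcal P$, so this is the intended reading. Finally, the one ``bookkeeping'' check you flag — that $f\mapsto\omega(g,f)$ is a character of $A^{(I)}$, which is what makes $T_g$ genuinely affine — is exactly the multiplicativity of $\gamma_g$ combined with the paper's standing assumption that $d$ untwists on the base $A^{(I)}$, as you say.
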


We are now ready to prove the main theorem of this section. Although this result builds on previous work \cite{cios22, cfqt23}, we provide all the details for the sake of completeness.

\begin{proof}[\textbf{Proof of \ref{strongsuperrig}}] Identify $\cL_d(G)^t$ and $\cL_{c}(H)$ under the $\ast$-isomorphism $\Psi$. Let $\mathbb{D}_n(\C)\subset\mathbb{M}_n(\C)$ be the subalgebra of diagonal matrices. For $1\leq i\leq n^2$, let $e_i=\mathbbm{1}_{\{i\}}\in\mathbb{D}_n(\C)\:\overline{\otimes}\:\mathbb{D}_n(\C)^{\text{op}}$. Denote

\begin{align*}
    \cM:&=\cL_d(G),\quad \tilde{\cM}:=\cM\:\overline{\otimes}\:\cM^{\text{op}}\:\overline{\otimes}\:\cM\:\overline{\otimes}\:\mathbb{M}_n(\C)\:\overline{\otimes}\:\mathbb{M}_n(\C)^{\text{op}},\quad \mathcal{P}:=\Delta(\cL(A^{(I)})),\\
    &\tilde{\mathcal{Q}}:=\cL(A^{(I)})\:\overline{\otimes}\:\cL(A^{(I)})^{\text{op}}\:\overline{\otimes}\:\cL(A^{(I)})\:\overline{\otimes}\:\mathbb{D}_n\:\overline{\otimes}\:\mathbb{D}_n^{\text{op}},\quad\mathcal{R}:=\mathcal{P}'\cap \tilde{\cM}\subset \tilde{\cM}.
\end{align*}

\vspace{2mm}

\begin{claim}\label{intertwiningcios} $\mathcal{R}\prec_{\tilde{\cM}}^s\tilde{\mathcal{Q}}$.
\end{claim}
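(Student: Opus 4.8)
The plan is to prove Claim~\ref{intertwiningcios} by a Popa‑style deformation/rigidity argument, following and adapting the template of \cite{cios22} and \cite[Theorem C]{cfqt23}. The algebra $\cM=\cL_d(G)$ carries two relevant deformations that I would first set up (in the precise form established in Section~\ref{section4}). First, because $A$ is free abelian, $\cL(A^{(I)})$ is the base of a generalized Bernoulli‑type crossed product $\cM=\cL(A^{(I)})\rtimes B$, the action being the one from \eqref{actionwithcocycle}, which is weakly mixing and free by Lemma~\ref{weakmixandfree}; hence $\cM$, $\cM^{\text{op}}$, and therefore $\tilde{\cM}$, admit Popa's $s$‑malleable deformation $(\alpha_t)_{t\in\R}$ defined on a dilation $\widehat{\tilde{\cM}}\supseteq\tilde{\cM}$ obtained by enlarging each base copy of $\cL(A)$. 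Second, since $B$ is a subgroup of a hyperbolic group, a proper (quasi‑)cocycle on the ambient hyperbolic group yields a second deformation $(\beta_t)$ of $\tilde{\cM}$ whose ``rigid'' directions are controlled by subalgebras of the form $\cL(A^{(I)})\rtimes S$ with $S\le B$ amenable --- and here the amenable‑stabilizer hypothesis on $B\curvearrowright I$ enters. Both deformations are $s$‑malleable and satisfy Popa's transversality inequality.

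Next I would bring in rigidity. Since $G$ has property (T) and $d$ is a scalar‑valued $2$‑cocycle, $\cM=\cL_d(G)$ has property (T) as a tracial von Neumann algebra, so $\Delta(\cM)\cong\cM$ is a rigid subalgebra of $p\tilde{\cM}p$; consequently both $\alpha_t$ and $\beta_t$ converge uniformly to the identity on the unit ball of $\Delta(\cM)$, in particular on the unit ball of $\mathcal{P}=\Delta(\cL(A^{(I)}))$. Moreover $\cL(A^{(I)})$ is regular in $\cM$ (it is the von Neumann algebra of a normal subgroup, hence normalised by all group unitaries), so $\Delta(\cM)$ normalises $\mathcal{R}=\mathcal{P}'\cap p\tilde{\cM}p$; also $\mathcal{P}\subseteq\mathcal{R}$ since $\mathcal{P}$ is abelian. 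These will be the inputs to the main step.

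The heart of the argument is to combine the uniform convergence with Popa's transversality to locate $\mathcal{R}$. Running the standard transversality/averaging argument with $\beta_t$, and using the non‑embedding statements of Lemma~\ref{intertwiningpropofcom}(a),(b) --- which forbid $\Delta(\cM)$, and hence $\mathcal{R}$ (which contains a diffuse corner of $\Delta(\cM)$), from sitting inside the ``coarse'' pieces of $\tilde{\cM}$ --- one should get that $\mathcal{R}$ embeds into $(\cL(A^{(I)})^{\overline{\otimes}3}\rtimes S)\,\overline{\otimes}\,\mathbb{M}_n\,\overline{\otimes}\,\mathbb{M}_n^{\text{op}}$ for some amenable $S\le B^3$; running it with $\alpha_t$ gives that $\mathcal{R}$ embeds into $\cL(B)^{\overline{\otimes}3}\,\overline{\otimes}\,\mathbb{M}_n\,\overline{\otimes}\,\mathbb{M}_n^{\text{op}}$. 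Intersecting these two intertwinings --- using the usual intersection lemma together with the weak mixing and freeness of the base action from Lemma~\ref{weakmixandfree} --- forces $\mathcal{R}\prec^s_{\tilde{\cM}}\cL(A^{(I)})^{\overline{\otimes}3}\,\overline{\otimes}\,\mathbb{M}_n\,\overline{\otimes}\,\mathbb{M}_n^{\text{op}}$. Finally, since $\mathbb{M}_n\,\overline{\otimes}\,\mathbb{M}_n^{\text{op}}$ is finite dimensional with $\mathbb{D}_n\,\overline{\otimes}\,\mathbb{D}_n^{\text{op}}$ a MASA, one has $\cL(A^{(I)})^{\overline{\otimes}3}\,\overline{\otimes}\,\mathbb{M}_n\,\overline{\otimes}\,\mathbb{M}_n^{\text{op}}\prec^s_{\tilde{\cM}}\tilde{\mathcal{Q}}$, and composing the two strong intertwinings yields $\mathcal{R}\prec^s_{\tilde{\cM}}\tilde{\mathcal{Q}}$.

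The step I expect to be the main obstacle is the third paragraph: extracting an embedding of $\mathcal{R}$ itself --- not merely of the amenable subalgebra $\mathcal{P}$ --- and pinning it down to the \emph{base} algebra $\tilde{\mathcal{Q}}$, rather than to a strictly larger ``rigid core'' of a single deformation (such as $\cL(B)^{\overline{\otimes}3}\,\overline{\otimes}\,\mathbb{M}_n\,\overline{\otimes}\,\mathbb{M}_n^{\text{op}}$ for $\alpha_t$, or an amenable‑stabilizer subalgebra for $\beta_t$). It is precisely the interplay of the two deformations with the weak mixing of the generalized Bernoulli action that collapses the two candidate locations onto the base; keeping track of the matrix amplification $p$ and of the passage from $\mathbb{M}_n$ to $\mathbb{D}_n$ adds technical overhead but is routine. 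This is where the proof follows --- and must be transplanted to the twisted, amplified setting --- the arguments of \cite{cios22} and \cite{cfqt23}.
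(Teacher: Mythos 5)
The paper's proof and your proposal share the high‑level deformation/rigidity philosophy, but the paper works at the level of black‑box structural theorems (Theorem 3.10 and Corollary 4.7 of \cite{cios22}), while you are re‑deriving malleable deformations and transversality from scratch. The important structural difference, and the place where your argument has a genuine gap, is the logical order of the steps. The paper first locates the \emph{amenable} algebra $\mathcal{P}=\Delta(\cL(A^{(I)}))$ into each tensor slot by applying \cite[Theorem 3.10]{cios22} (whose input is exactly: an amenable subalgebra whose normalizer has property (T)), intersects the three intertwinings using \cite[Lemma 2.8(2)]{dhi19}, and rules out the degenerate locations with Lemma~\ref{intertwiningpropofcom}(b). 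It then uses \cite[Corollary 4.7]{cios22} to conclude that $\mathcal{R}$ is \emph{amenable}. Only after amenability of $\mathcal{R}$ is in hand can the same normalizer‑location theorem be applied a second time, now to $\mathcal{R}$ (which is amenable and normalized by the property (T) algebra $\Delta(\cM)$), yielding $\mathcal{R}\prec^s_{\tilde{\cM}}\tilde{\mathcal{Q}}$.

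Your proposal skips this intermediate amenability step entirely and instead tries to run the transversality argument directly on $\mathcal{R}$, justified by the remark that $\mathcal{R}$ ``contains a diffuse corner of $\Delta(\cM)$.'' This is not a valid move. The algebra $\mathcal{R}=\mathcal{P}'\cap p\tilde{\cM}p$ is the relative commutant of an \emph{abelian}, hence non‑rigid, algebra; it has no a priori spectral gap or uniform rigidity with respect to either deformation, so Popa's transversality lemma simply does not apply to $\mathcal{R}$ without further input. (What $\mathcal{R}$ contains is not a corner of $\Delta(\cM)$ but the abelian subalgebra $\mathcal{P}\subseteq\Delta(\cM)$; this forces $\mathcal{P}\prec$ wherever $\mathcal{R}\prec$, which is useful for ruling out target algebras but does nothing to produce an intertwining of $\mathcal{R}$ in the first place.) The correct mechanism is: locate $\mathcal{P}$, combine with the non‑embedding Lemma~\ref{intertwiningpropofcom}(b) and a solidity‑type dichotomy (\cite[Corollary 4.7]{cios22}) to deduce $\mathcal{R}$ is amenable, then re‑apply the normalizer‑location theorem to $\mathcal{R}$. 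You do acknowledge at the end that passing from $\mathcal{P}$ to $\mathcal{R}$ is the main obstacle, but the mechanism you propose to bridge it is not the one that works; what is missing is precisely the amenability of $\mathcal{R}$.
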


\begin{subproof}[Proof of Claim \ref{intertwiningcios}] Since $\cM=\mathbb C\rtimes_{\eta}G$, $B$ is a subgroup of a hyperbolic group, and $\cM$ has property (T), we can use \cite[Theorem 3.10]{cios22} to obtain that $\mathcal{P}\prec^s\cM\:\overline{\otimes}\:\cM^{\text{op}}\:\overline{\otimes}\:\cL(A^{(I)})\:\overline{\otimes}\:\mathbb{M}_n(\C)\:\overline{\otimes}\:\mathbb{M}_n(\C)^{\text{op}}$. Doing the same argument in each tensor, we obtain $\mathcal{P}\prec^s\cL(A^{(I)})\:\overline{\otimes}\:\cM^{\text{op}}\:\overline{\otimes}\:\cM\:\overline{\otimes}\:\mathbb{M}_n(\C)\:\overline{\otimes}\:\mathbb{M}_n(\C)^{\text{op}}$ and $\mathcal{P}\prec^s\cM\:\overline{\otimes}\:\cL(A^{(I)})^{\text{op}}\:\overline{\otimes}\:\cM\:\overline{\otimes}\:\mathbb{M}_n(\C)\:\overline{\otimes}\:\mathbb{M}_n(\C)^{\text{op}}$. By \cite[Lemma 2.8(2)]{dhi19} we have $\mathcal{P}\prec^s \cL(A^{(I)})\:\overline{\otimes}\:\cL(A^{(I)})^{\text{op}}\:\overline{\otimes}\:\cL(A^{(I)})\:\overline{\otimes}\:\mathbb{M}_n(\C)\:\overline{\otimes}\:\mathbb{M}_n(\C)^{\text{op}}$. 

By Lemma \ref{intertwiningpropofcom}(b) we see that $\mathcal{P}$ cannot intertwine in either $\cL(A^{(I)})\:\overline{\otimes}\:\cL(A^{(I)})^{\text{op}}\:\overline{\otimes}\:1\:\overline{\otimes}\:\mathbb{M}_n(\C)\:\overline{\otimes}\:\mathbb{M}_n(\C)^{\text{op}}$, $\:\cL(A^{(I)})\:\overline{\otimes}\:1\:\overline{\otimes}\:\cL(A^{(I)})\:\overline{\otimes}\:\mathbb{M}_n(\C)\:\overline{\otimes}\:\mathbb{M}_n(\C)^{\text{op}}$ or $1\:\overline{\otimes}\:\cL(A^{(I)})^{\text{op}}\:\overline{\otimes}\cL(A^{(I)})\:\overline{\otimes}\:\mathbb{M}_n(\C)\:\overline{\otimes}\:\mathbb{M}_n(\C)^{\text{op}}$. By \cite[Corollary 4.7]{cios22}, $\mathcal{R}$ is amenable. Moreover, since $\mathcal{R}$ is normalized by $\Delta(\cM)$, repeating the first part of the proof we get $\mathcal{R}\prec_{\tilde{\cM}}^s\tilde{\mathcal{Q}}$.
\end{subproof}

Since $\tilde{\cQ}\subset\tilde{\cM}$ is a Cartan subalgebra, combining Claim \ref{intertwiningcios} with \cite[Lemma 3.7]{cios22} (see also \cite{ioa10}), after replacing $\Delta$ with $\text{Ad}(u)\circ\Delta$, for some unitary $u\in\tilde{\cM}$, we may assume  $p\in\tilde{\mathcal{Q}}$ and

\begin{equation}\label{PinQinR}
    \mathcal{P}\subset \tilde{\mathcal{Q}}p\subset\mathcal{R}.
\end{equation}

\noindent Throughout the proof, we will use of the following notation: for every $b\in B$, let $\hat{b}\in G$ so that $\varepsilon(\hat{b})=b$, where $\varepsilon\colon G\to B$ is the canonical quotient map of the wreath-like product $G\in\mathcal{WR}(A,B\curvearrowright I)$.

Fix $b\in B$ and denote by $\gamma_b=\text{Ad}(\Delta(u_{\hat{b}}))$ the action built from $G\curvearrowright A^{(I)}$ and the cocycle $d\colon G\times G\to\mathbb{T}$ as in \eqref{actionwithcocycle}. Then, $\gamma=(\gamma_b)_{b\in B}$ defines an action on $\mathcal{R}$ that leaves $\mathcal{P}$ invariant. Moreover, the restriction $B\curvearrowright^{\gamma}\mathcal{P}$ is free and weakly mixing by Lemma \ref{weakmixandfree}. 

\vspace{2mm}

\begin{claim}\label{weakmixonR} The action $B\curvearrowright^{\gamma}\mathcal{R}$ is weak mixing.
\end{claim}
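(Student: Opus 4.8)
Claim \ref{weakmixonR} asserts that the action $B\curvearrowright^{\gamma}\mathcal{R}$ is weak mixing, where $\mathcal{R}=\mathcal{P}'\cap\tilde{\mathcal{M}}$, $\mathcal{P}=\Delta(\mathcal{L}(A^{(I)}))$, and $\gamma_b=\mathrm{Ad}(\Delta(u_{\hat b}))$. We already know from Lemma \ref{weakmixandfree} that $B\curvearrowright^{\gamma}\mathcal{P}$ is weak mixing, and from \eqref{PinQinR} that $\mathcal{P}\subset\tilde{\mathcal{Q}}p\subset\mathcal{R}$. The plan is to exploit the fact that $\mathcal{R}$ sits, up to the ambient compression, inside the Cartan subalgebra $\tilde{\mathcal{Q}}$, so that $\mathcal{R}$ is abelian, and then to propagate the weak mixing of the action on $\mathcal{P}$ to the (slightly larger) abelian algebra $\mathcal{R}$.

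\textbf{Key steps.} First I would recall that $\tilde{\mathcal{Q}}$ is a Cartan, hence maximal abelian, subalgebra of $\tilde{\mathcal{M}}$; since $\mathcal{P}\subset\tilde{\mathcal{Q}}p$, we have $\tilde{\mathcal{Q}}p\subset\mathcal{P}'\cap p\tilde{\mathcal{M}}p=\mathcal{R}$, and on the other hand $\mathcal{R}$, being normalized by $\Delta(\mathcal{M})\supset\mathcal{P}$ and having been shown in Claim \ref{intertwiningcios} to satisfy $\mathcal{R}\prec^s_{\tilde{\mathcal{M}}}\tilde{\mathcal{Q}}$, is forced to equal $\tilde{\mathcal{Q}}p$: indeed the relative commutant of the MASA $\tilde{\mathcal{Q}}p$ in $p\tilde{\mathcal{M}}p$ is $\tilde{\mathcal{Q}}p$ itself, and the chain $\mathcal{P}\subset\tilde{\mathcal{Q}}p\subset\mathcal{R}\subset\mathcal{P}'\cap p\tilde{\mathcal{M}}p$ combined with $\mathcal{R}$ abelian (a consequence of the strong intertwining into the Cartan, together with $\mathcal{R}$ being $\Delta(\mathcal{M})$-normalized) pins down $\mathcal{R}=\tilde{\mathcal{Q}}p$. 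Thus it suffices to prove that $B\curvearrowright^{\gamma}\tilde{\mathcal{Q}}p$ is weak mixing. Second, I would identify the action concretely: under \eqref{Delta0} and \eqref{actionwithcocycle}, the action $\gamma$ on $\tilde{\mathcal{Q}}=\mathcal{L}(A^{(I)})\,\overline\otimes\,\mathcal{L}(A^{(I)})^{\mathrm{op}}\,\overline\otimes\,\mathcal{L}(A^{(I)})\,\overline\otimes\,\mathbb{D}_n\,\overline\otimes\,\mathbb{D}_n^{\mathrm{op}}$ is (up to the cocycle phases, which act trivially on the abelian pieces and permute the finite-dimensional diagonal factors) the triple Bernoulli-type action $B\curvearrowright I\sqcup I\sqcup I$ built over the three copies of $G\curvearrowright I$, tensored with a permutation action of $B$ on the finitely many minimal projections of $\mathbb{D}_n\,\overline\otimes\,\mathbb{D}_n^{\mathrm{op}}$ that fixes $p$. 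Third, I would show weak mixing of this action on $\tilde{\mathcal{Q}}p$: the action $B\curvearrowright I\sqcup I\sqcup I$ has all orbits infinite (each of the three copies of $B\curvearrowright I$ has infinite orbits since the action of $B$ on $I$ has infinite orbits by hypothesis in \ref{strongsuperrig}), so the generalized Bernoulli action $B\curvearrowright\bigotimes_{j\in I\sqcup I\sqcup I}(\text{copy of }A)$ is weak mixing by the standard criterion for Bernoulli actions over actions with infinite orbits; the finite-dimensional factor $\mathbb{D}_n\,\overline\otimes\,\mathbb{D}_n^{\mathrm{op}}$ contributes only finite-dimensional, hence bounded, orbits, but since the $p$-compression restricts to the single projection $p=\sum_i\Delta(1)e_i$-type element fixed by $B$, and the weak-mixing algebra tensor a finite-dimensional algebra with a trivial (on $p\tilde{\mathcal{Q}}p$) $B$-action still has no nontrivial finite-dimensional subrepresentation, weak mixing of the whole is preserved. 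Concretely, any $B$-finite vector in $L^2(\tilde{\mathcal{Q}}p)\ominus\mathbb{C}$ would, after projecting onto a fixed minimal projection of the diagonal part, give a $B$-finite vector in the $L^2$ of the Bernoulli part $\ominus\mathbb{C}$, contradicting its weak mixing.

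\textbf{Main obstacle.} The delicate point is the first step—identifying $\mathcal{R}$ exactly with $\tilde{\mathcal{Q}}p$ (or at least showing $\mathcal{R}$ is abelian and the $\gamma$-action on it is a "built-over-infinite-orbits" action), rather than merely knowing $\mathcal{P}\subset\tilde{\mathcal{Q}}p\subset\mathcal{R}$ with a one-sided strong intertwining $\mathcal{R}\prec^s\tilde{\mathcal{Q}}$. One must leverage that $\mathcal{R}$ is $\Delta(\mathcal{M})$-normalized together with the structure of $\tilde{\mathcal{Q}}$ as a Cartan subalgebra to upgrade the intertwining to the containment $\mathcal{R}\subseteq\tilde{\mathcal{Q}}p$ after a further unitary conjugation absorbed into $\Delta$; this is exactly the kind of argument carried out in \cite[Section 8]{cfqt23} and \cite[Section 4]{cios22}, and the weak-mixing statement for $\mathcal{R}$ is really a corollary of that identification plus Lemma \ref{weakmixandfree} applied in triplicate. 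Once $\mathcal{R}=\tilde{\mathcal{Q}}p$ is in hand, the weak mixing is a routine consequence of the weak mixing of generalized Bernoulli actions over actions with infinite orbits, so I expect the write-up to be short, citing Lemma \ref{weakmixandfree} and the explicit form of $\gamma$ from \eqref{actionwithcocycle}.
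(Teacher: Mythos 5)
Your proposal has a genuine gap, and moreover its route is different from the paper's.

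The central difficulty is that both of your first two steps are \emph{premature} at this stage of the argument. First, you want to identify $\mathcal{R}=\tilde{\mathcal{Q}}p$ (in particular, assert that $\mathcal{R}$ is abelian). But at this point all that is known is the one-sided chain $\mathcal{P}\subset\tilde{\mathcal{Q}}p\subset\mathcal{R}$ together with $\mathcal{R}\prec^s_{\tilde{\cM}}\tilde{\mathcal{Q}}$; this does \emph{not} force $\mathcal{R}$ to be abelian. Indeed, in the paper it is \emph{after} Claim \ref{weakmixonR} is established that one deduces $\mathcal{R}$ is of type I$_k$, with the decomposition $\mathcal{R}=\mathscr{Z}(\mathcal{R})\:\overline\otimes\:\mathbb{M}_k(\C)^{\rm op}$ and $\tilde{\mathcal{Q}}p=\mathscr{Z}(\mathcal{R})\:\overline\otimes\:\mathbb{D}_k^{\rm op}$; the reduction to $k=1$ comes only at the very end of the proof of \ref{strongsuperrig}. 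So your ``main obstacle'' paragraph essentially tries to derive weak mixing from a structural identification of $\mathcal{R}$ that itself is a downstream consequence of the weak mixing statement — the argument is circular. Second, your attempt to identify $\gamma$ with the triple Bernoulli action on $\tilde{\mathcal{Q}}$ is likewise premature: $\gamma_b=\mathrm{Ad}(\Delta(u_{\hat b}))$, and at this stage nothing is known about the shape of $\Delta(u_{\hat b})$ beyond the commutation relations; the formula $\Delta(u_g)=\tilde\lambda_g\,(u_g\otimes\overline{u}_{g^{-1}}\otimes u_g)$ is only obtained in the final paragraphs of the proof, long after Claim \ref{weakmixonR} has been used.

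The paper's actual argument avoids both obstructions. It takes an arbitrary finite-dimensional $\gamma(B)$-invariant subspace $\mathcal{H}\subset L^2(\mathcal{R})$, forms the closure $\mathcal K$ of $\mathcal{H}\Delta(\cM)$, and observes that $\mathcal{K}$ is a right finitely generated $\Delta(\cM)$-sub-bimodule of $L^2(p\tilde{\cM}p)$ (left-invariance comes from the factorization $G=\{a\hat b\}$ combined with $\mathcal{P}\mathcal{K}=\mathcal{K}$ and $\Delta(u_{\hat b})\mathcal{K}=\mathcal{K}$). Then Lemma \ref{intertwiningpropofcom}(c) gives $\mathcal{K}\subset L^2(\Delta(\cM))$, hence $\mathcal{H}\subset L^2(\Delta(\cM))$, hence (since $\mathcal{H}$ commutes with $\mathcal{P}$) $\mathcal{H}\subset L^2(\mathcal{P})$, and finally the weak mixing of $\gamma$ on $\mathcal{P}$ from Lemma \ref{weakmixandfree} forces $\mathcal{H}\subset\C p$. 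You do quote the right ingredients (Lemma \ref{weakmixandfree}, the inclusion $\mathcal{P}\subset\tilde{\mathcal{Q}}p\subset\mathcal{R}$), but you do not invoke Lemma \ref{intertwiningpropofcom}(c) — the bimodule criterion — which is the engine of the paper's proof; without it, you have no way to force a finite-dimensional invariant subspace of $L^2(\mathcal{R})$ back into $L^2(\mathcal{P})$ short of the circular structural identification you propose.
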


\begin{subproof}[Proof of Claim \ref{weakmixonR}] Let $\mathcal{H}\subset L^2(\mathcal{R})$ be a finite dimensional $\gamma(B)$-invariant subspace. Let $\mathcal K\subset L^2(p\tilde{\cM}p)$ be the $\|\cdot\|_2$-closure of the linear span of $\mathcal{H}\Delta(\cM)$. Since $\mathcal{H}$ and $\mathcal{P}$ commute, we get that $\mathcal{P}\mathcal{K}=\mathcal{K}$. If $b\in B$, then $\Delta(u_{\hat{b}})\mathcal{H}=\mathcal{H}\Delta(u_{\hat{b}})$ and thus $\Delta(u_{\hat{b}})\mathcal{K}=\mathcal{K}$. Since $G=\{a\hat{b}:a\in A^{(I)},\: b\in B\}$ then $\mathcal{K}$ is a left $\Delta(\cM)$-module. Thus, $\mathcal{K}$ is a $\Delta(\cM)$-bimodule which is right finitely generated since $\mathcal{H}$ is finite dimensional. Using Lemma \ref{intertwiningpropofcom}(c), we obtain $\mathcal{K}\subset L^2(\Delta(\cM))$, and hence $\mathcal{H}\subset L^2(\Delta(\cM))$. Since $\mathcal{H}$ commutes with $\mathcal{P}$, we have $\mathcal{H}\subset L^2(\mathcal{P})$. By Lemma \ref{weakmixandfree} the restriction of $\gamma$ to $\mathcal{P}$ is weak mixing implying that $\mathcal{H}\subset\C p$.
\end{subproof}

From the prior two claims, it follows that $\mathcal{R}$ is an algebra of type  I$_k$, for some $k\in\N$. Using the inclusion \eqref{PinQinR} and the first paragraph of the proof of \cite[Lemma 3.8]{cios22}, we have a decomposition $\mathcal{R}=\mathscr{Z}(\mathcal{R})\:\overline{\otimes}\:\mathbb{M}_k(\C)^{\text{op}}$ such that $\tilde{\mathcal{Q}}p=\mathscr{Z}(\mathcal{R})\:\overline{\otimes}\:\mathbb{D}_k^{\text{op}}$. Thus, $(\mathcal{R})_1\subseteq\sum_{i=1}^k(\tilde{\mathcal{Q}}p)_1x_i$ for some $x_1,...,x_n\in\mathcal{R}$. Moreover, by \cite[Lemma 3.8]{cios22}, there exists an action $\beta=(\beta_b)_{b\in B}$ of $B$ on $\mathcal{R}$ such that

\begin{enumerate}
    \item[(a)] for every $b\in B$ we have $\beta_b=\gamma_b\circ\text{Ad}(w_b)=\text{Ad}(\Delta(u_{\hat{b}})w_b)$, for some $w_b\in \mathscr{U}(\mathcal{R})$;
    \item[(b)] $\tilde{\mathcal{Q}}p$ is $\beta(B)$-invariant and the restriction of $\beta$ to $\tilde{\mathcal{Q}}p$ is free; and
    \item[(c)] the minimal projections $p_1,\ldots,p_{k^2}$ of $1\:\overline{\otimes}\:\mathbb{D}_k\:\overline{\otimes}\:\mathbb{D}_k^{\text{op}}\subset\tilde{\mathcal{Q}}p$ are $\beta(B)$-invariant and the restriction of $\beta$ to $\tilde{\mathcal{Q}}p_i$ is weak mixing for every $1\leq i\leq k^2$.
\end{enumerate}

\noindent Let $\tilde{B}=B\times B\times B$ and consider the action $\alpha:=(\alpha_b)_{b\in\tilde{B}}$ of $\tilde{B}$ on $\cL(A^{(I)})\:\overline{\otimes}\:\cL(A^{(I)})^{\text{op}}\:\overline{\otimes}\:\cL(A^{(I)})$ given by 

\begin{equation*}
    \alpha_{(b_1,b_2,b_3)}=\text{Ad}(u_{\widehat{b_1}}\otimes \overline{u}_{\widehat{b_2^{-1}}}\otimes u_{\widehat{b_3}}),
\end{equation*}

\noindent for $b_1,b_2,b_3\in B$, where the conjugation action is built from $G\curvearrowright A^{(I)}$ and the cocycle $d$ as in \eqref{actionwithcocycle}. Let $(Y,\nu)$ be the dual of $A$ with its respective Haar measure, and let $(X,\mu):=(Y^{I}\times Y^I\times Y^I,\nu^I\times\nu^I\times \nu^I)$. Define $(\tilde{X},\tilde{\mu})=(X\times\Z/n\Z\times \Z/n\Z,\mu\times \lambda\times \lambda)$, where $\lambda$ is the counting measure on $\Z/n\Z$, and identify $\tilde{Q}=L^{\infty}(\tilde{X},\tilde{\mu})$. Let $\tilde{B}\times\Z/n\Z\times\Z/n\Z\curvearrowright^{\tilde{\alpha}}\tilde{\mathcal{Q}}$ be given by 

\begin{equation*}
    \tilde{\alpha}(h,a_1,a_2)(x,a_3,a_4)=(h\cdot x, a_1+a_3,a_2+a_4),
\end{equation*}

\noindent for $h\in \tilde{B}$, $a_1,a_2,a_3,a_4\in\Z/n\Z$. Denote by $\tilde{\alpha}$ still the corresponding p.m.p. action $\tilde{B}\times\Z/n\Z\times\Z/n\Z\curvearrowright(\tilde{X},\tilde{\mu})$. Let $X_0\subseteq \tilde{X}$ be a measurable set for which $p=\mathbbm{1}_{X_0}$. Since $\tilde{\mathcal{Q}}p=L^{\infty}(X_0)$ is $\beta(B)$-invariant, we get a measure preserving action $B\curvearrowright^{\beta}(X_0,\tilde{\mu}|_{X_0})$.

Notice $\tilde{\mathcal{Q}}\subset \tilde{\cM}$ is a Cartan subalgebra and the equivalence relation associated to the inclusion is equal to $\mathscr{R}(\tilde{B}\times\Z/n\Z\times\Z/n\Z\curvearrowright \tilde{X})$. Since the restriction of $\beta$ to $\tilde{\mathcal{Q}}p$ is implemented by unitaries in $\tilde{\cM}$ we deduce that 

\begin{equation}\label{betaxsubsetalphax}
    \beta(B)\cdot x\subset \tilde{\alpha}(\tilde{B}\times\Z/n\Z\times\Z/n\Z)\cdot x\text{, for almost every }x\in X_0.
\end{equation}


\noindent Finally, to apply \cite[Theorem 4.1]{cios22} we prove the following claim.

\vspace{2mm}

\begin{claim}\label{setstozero} Let $B_1=B_2=B_3=B$. For every subgroup of infinite index $K_l\leqslant B_l$ there exists a sequence $(h_m)_m\subset B$ such that for every $s,t\in B$ and $1\leq l\leq 3$ we have

\begin{equation*}
    \tilde{\mu}(\{x\in X:\beta_{h_m}(x)\in\tilde{\alpha}(\tilde{B}_{\ddot{l}}\times sK_lt\times\Z/n\Z\times\Z/n\Z)(x)\})\to 0\text{ as } m\to\infty.
\end{equation*}

\noindent The notation $\tilde{B}_{\ddot{l}}$ means we remove the $l$ component from $\tilde{B}=B_1\times B_2\times B_3$.
\end{claim}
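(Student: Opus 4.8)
The plan is to reduce the claim to a statement about the ambient action $\gamma$ (before the unitary perturbation $w_b$) and then to the action on the base algebra $\cL(A^{(I)})^{\otimes 3}$, where weak mixing of $\gamma$ from Lemma~\ref{weakmixandfree} can be used together with the fact that $B$ is a subgroup of a hyperbolic group. First I would recall that by property~(a) above, $\beta_b = \gamma_b\circ\mathrm{Ad}(w_b)$ with $w_b\in\mathscr U(\mathcal R)$, so the orbit $\beta(B)\cdot x$ and the orbit $\gamma(B)\cdot x$ differ only by elements of the equivalence relation generated by $\mathcal R \supset \tilde\cQ p$; since the sets in the claim are saturated for $\tilde\alpha(\tilde B_{\ddot l}\times sK_lt\times\ZZ/n\ZZ\times\ZZ/n\ZZ)$ and the latter relation contains $\mathscr R(\tilde\cQ p)$, it suffices to prove the estimate with $\beta_{h_m}$ replaced by $\gamma_{h_m}$, i.e.\ by $\mathrm{Ad}(u_{\widehat{h_m}}\otimes\overline u_{\widehat{h_m^{-1}}}\otimes u_{\widehat{h_m}})$ acting on $(\tilde X,\tilde\mu)$.

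Next, the $\ZZ/n\ZZ\times\ZZ/n\ZZ$-coordinates play no role: the diagonal-matrix directions are acted on trivially by $\gamma$, so the event in question projects to the analogous event for the p.m.p.\ action $\tilde B\curvearrowright (X,\mu)=(Y^I\times Y^I\times Y^I)$ with $\tilde\alpha_{(b_1,b_2,b_3)}=\mathrm{Ad}(u_{\widehat{b_1}}\otimes\overline u_{\widehat{b_2^{-1}}}\otimes u_{\widehat{b_3}})$, and it is enough to show
\begin{equation*}
    \mu\big(\{x\in X: (h_m,h_m^{-1},h_m)\cdot x\in (\tilde B_{\ddot l}\times sK_lt)\cdot x\}\big)\to 0.
\end{equation*}
Here the three factor actions $B\curvearrowright Y^I$ are (up to the inversion in the middle coordinate) copies of the Bernoulli-type action built over $B\curvearrowright I$, which is weakly mixing by Lemma~\ref{weakmixandfree}. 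The standard mechanism (as in \cite[Section 4]{cios22}, and going back to the measure-equivalence arguments of Popa--Vaes): for a weakly mixing action built over $B\curvearrowright I$ with amenable/finite stabilizers, and a subgroup $K_l\le B_l$ of infinite index, the functions $x\mapsto \mathbbm 1[\,b\cdot x\in sK_lt\cdot x\,]$ become small in $L^1$ as $b$ escapes every finite union of double cosets; one then chooses $(h_m)$ going to infinity in $B$ appropriately (any sequence leaving all finite subsets works once one invokes the mixing on each coordinate to control the "$\tilde B_{\ddot l}$ in the other two coordinates" part). Concretely I would: (i) fix $l$, say $l=1$, and write the event as requiring the middle and third coordinates of $(h_m,h_m^{-1},h_m)\cdot x$ to lie in the $(B\times B)$-orbit of $x$'s middle and third coordinates — automatic — and the \emph{first} coordinate to lie in $sK_1t\cdot(\text{first coordinate of }x)$; (ii) apply weak mixing of the first factor action to see $\mu_Y^{\otimes I}(\{y: h_m\cdot y\in sK_1t\cdot y\})\to 0$ as $h_m\to\infty$, using that $K_1$ has infinite index so $sK_1t$ meets each orbit "negligibly"; (iii) Fubini over the other two coordinates. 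The cases $l=2,3$ are symmetric (for $l=2$ one uses the $g\mapsto g^{-1}$ twist, which does not affect infinite-index-ness of $K_2$).

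The main obstacle I anticipate is bookkeeping the twisting cocycle $d$ and the inversion in the second coordinate when transporting the von Neumann algebraic conjugation action $\gamma_b=\mathrm{Ad}(\Delta(u_{\widehat b})w_b)$ down to an honest p.m.p.\ action on $(\tilde X,\tilde\mu)$: one must check that $\gamma$ really does descend to $\cL(A^{(I)})^{\otimes 3}\,\overline\otimes\,\mathbb D_n\,\overline\otimes\,\mathbb D_n^{\mathrm{op}}$ as the $\tilde\alpha$-type action with the $\ZZ/n\ZZ$ coordinates fixed, which is where \eqref{actionwithcocycle} and the triviality of $d$ on the free abelian base $A^{(I)}$ are used, together with the observation (made just before Lemma~\ref{weakmixandfree}) that the action is built over $G\curvearrowright I$. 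Once that identification is in place, the convergence to $0$ is a routine weak-mixing estimate; the only quantitative input needed is that infinite-index subgroups of $B$ (a subgroup of a hyperbolic group) have the property that $\{b: b\in sK_lt\}$ intersects every $B$-orbit in $Y^I$ in measure tending to $0$ along any sequence $h_m\to\infty$, which follows from weak mixing of the factor actions and has nothing to do with hyperbolicity per se — hyperbolicity of $B$ was already consumed in Claim~\ref{intertwiningcios}.
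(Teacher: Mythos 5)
Your reduction breaks at the second step, and the breakdown is fatal. You assert that proving the estimate for $\gamma_{h_m}$ amounts to analyzing $\mathrm{Ad}\bigl(u_{\widehat{h_m}}\otimes\overline u_{\widehat{h_m^{-1}}}\otimes u_{\widehat{h_m}}\bigr)$ on $(\tilde X,\tilde\mu)$, i.e.\ you implicitly identify $\Delta(u_{\widehat{h_m}})$ with $u_{\widehat{h_m}}\otimes\overline u_{\widehat{h_m^{-1}}}\otimes u_{\widehat{h_m}}\otimes 1\otimes 1$. But at this point of the argument $\Delta$ is still a completely abstract unital $*$-homomorphism: the triple comultiplication $\Delta_0$ takes the explicit form \eqref{3commul} only on the canonical $H$-unitaries $v_h$ of $\cL_c(H)$, not on the $G$-unitaries $u_g$ that we are tracking, and the identification $\cL_d(G)^t=\cL_c(H)$ is exactly the unknown isomorphism we are trying to describe. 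The statement that $\Delta(u_g)=r_g(u_g\otimes\overline u_{g^{-1}}\otimes u_g\otimes 1\otimes 1)$ with $r_g$ in the Cartan part is the \emph{conclusion} of a long chain of reasoning (it appears only after Claims~\ref{epsilonsconjugated}--\ref{assumemapsareidentity}); Claim~\ref{setstozero} is an input to that chain, not a consequence of it. So you cannot replace $\gamma_{h_m}$ by a concrete Bernoulli shift and invoke Lemma~\ref{weakmixandfree} — if you could, the theorem would already be proved.

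What actually makes the claim work is Popa's intertwining machinery, not a self-contained ergodic estimate. The paper sets $G_l=\varepsilon^{-1}(K_l)$, invokes Lemma~\ref{intertwiningpropofcom}(a) — the \emph{non-intertwining} of $\Delta(\cM)$ into $\cL(G_l)\overline\otimes\cM^{\text{op}}\overline\otimes\cM\overline\otimes\mathbb M_n(\C)\overline\otimes\mathbb M_n(\C)^{\text{op}}$ and its cyclic permutations — and then uses condition~(2) of Popa's intertwining criterion to extract a sequence $(g_m)\subset G$ with $\|\mathbb E_{\cdots}(x\Delta(u_{g_m})y)\|_2\to 0$. Setting $h_m=\varepsilon(g_m)$, the twisting $w_{h_m}\in\mathscr U(\mathcal R)$ is absorbed not by the orbit-saturation observation you propose (which is itself shaky, since $\mathscr U(\mathcal R)$ does not normalize $\tilde\cQ p$), but by the type-$\mathrm I_k$ structure $(\mathcal R)_1\subseteq\sum_{i=1}^k(\tilde\cQ p)_1 x_i$, which gives $\Delta(u_{\widehat{h_m}})w_{h_m}\in\sum_i\Delta(u_{g_m})(\tilde\cQ)_1 x_i$ and lets the $\|\cdot\|_2$-decay pass through. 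Finally, the conditional-expectation decay is converted to the measure estimate by the Cartan identification $\tilde\cQ\cong L^\infty(\tilde X)$ and the explicit formulas relating $\|\mathbb E(\cdot)\|_2^2$ to $\tilde\mu$ of the events. Your ``routine weak-mixing estimate'' has no access to this decay because the sequence $(h_m)$ itself is produced by the intertwining criterion, not chosen a priori.
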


\begin{subproof}[Proof of Claim \ref{setstozero}] Let $G_l:=\varepsilon^{-1}(K_l)$, and notice that $G_l$ is an infinite index subgroup of $G$. By Lemma \ref{intertwiningpropofcom}(a), we can find a sequence $(g_m)_m\subset G$ such that

\begin{align*}
    &\|\mathbb{E}_{\cM\overline{\otimes}\cM^{\text{op}}\overline{\otimes}\cL_d(G_3)\overline{\otimes}\mathbb{M}_n(\C)\overline{\otimes}\mathbb{M}_n(\C)^{\text{op}}}(x\Delta(u_{g_m})y)\|_2,\:\|\mathbb{E}_{\cM\overline{\otimes}\cL_d(G_2)^{\text{op}}\overline{\otimes}\cM\overline{\otimes}\mathbb{M}_n(\C)\overline{\otimes}\mathbb{M}_n(\C)^{\text{op}}}(x\Delta(u_{g_m})y)\|_2,\\
    &\quad\quad\quad\quad\quad\quad\quad\quad\quad\quad\mathbb\|\mathbb{E}_{\cL_d(G_1)\overline{\otimes}\cM^{\text{op}}\overline{\otimes}\cM\overline{\otimes}\mathbb{M}_n(\C)\overline{\otimes}\mathbb{M}_n(\C)^{\text{op}}}(x\Delta(u_{g_m})y)\|_2
\end{align*}

\noindent converge to zero for all $x,y\in \tilde{\cM}$, as $m\ra \infty$. Let $h_m:=\varepsilon(g_m)\in B$. We claim $(h_m)_m$ is the sequence that satisfies the statement. Since $g_m^{-1}\widehat{h_m}\in A^{(I)}$ and $w_{h_m}\in\mathscr{U}(\mathcal{R})$ we get  $\Delta(u_{\widehat{h_m}})w_{h_m}\in\Delta(u_{g_m})\mathscr{U}(\mathcal{R})$. Thus, for every $m$, $\Delta(u_{\widehat{h_m}})w_{h_m}\in \sum_{i=1}^t\Delta(u_{g_m})(\tilde{\mathcal{Q}})_1x_i$ for $x_1,...,x_t\in\mathcal{R}$. In particular, this implies that the following quantities

\begin{align}\label{Deltaw_htozero}
&\|\mathbb{E}_{\cM\overline{\otimes}\cM^{\text{op}}\overline{\otimes}\cL_d(G_3)\overline{\otimes}\mathbb{M}_n(\C)\overline{\otimes}\mathbb{M}_n(\C)^{\text{op}}}(x\Delta(u_{\widehat{h_m}})w_{h_m}y)\|_2,\nonumber\\ &\|\mathbb{E}_{\cM\overline{\otimes}\cL_d(G_2)^{\text{op}}\overline{\otimes}\cM\overline{\otimes}\mathbb{M}_n(\C)\overline{\otimes}\mathbb{M}_n(\C)^{\text{op}}}(x\Delta(u_{\widehat{h_m}})w_{h_m}y)\|_2,\\
&\|\mathbb{E}_{\cL_d(G_1)\overline{\otimes}\cM^{\text{op}}\overline{\otimes}\cM\overline{\otimes}\mathbb{M}_n(\C)\overline{\otimes}\mathbb{M}_n(\C)^{\text{op}}}(x\Delta(u_{\widehat{h_m}})w_{h_m}y)\|_2\nonumber
\end{align}

\noindent converge to zero for every $x,y\in\tilde{\mathcal{M}}$ as $m\ra \infty$. On the other hand, $\beta_{h_m}=\text{Ad}(\Delta(u_{\widehat{h_m}})w_{h_m})$ and $\alpha(b_1,b_2,b_3)=\text{Ad}(u_{(\widehat{b_1},\widehat{b_2^{-1}},\widehat{b_3})})$ for all $b_1,b_2,b_3\in B$. Altogether, these imply each of the following 

\begin{align*}
    \|\mathbb{E}_{\cM\overline{\otimes}\cM^{\text{op}}\overline{\otimes}\cL_d(G_3)\overline{\otimes}\mathbb{M}_n(\C)\overline{\otimes}\mathbb{M}_n(\C)^{\text{op}}}&((1\otimes 1\otimes u_{\hat{s}}^*)\Delta(u_{\widehat{h_m}})w_{h_m}(1\otimes 1 \otimes u_{\hat{t}}^*))\|_2^2\\
    &=\mu(\{x\in X:\beta_{h_m}(x)\in\tilde{\alpha}(\tilde{B}_{\ddot{3}}\times sK_3t\times\Z/n\Z\times\Z/n\Z)(x)\}),
\end{align*}

\begin{align*}
    \|\mathbb{E}_{\cM\overline{\otimes}\cL_d(G_2)^{\text{op}}\overline{\otimes}\cM\overline{\otimes}\mathbb{M}_n(\C)\overline{\otimes}\mathbb{M}_n(\C)^{\text{op}}}&((1\otimes \overline{u_{\hat{t}}}^*\otimes 1)\Delta(u_{\widehat{h_m}})w_{h_m}(1\otimes \overline{u_{\hat{s}}}^*\otimes 1))\|_2^2\\
    &=\mu(\{x\in X:\beta_{h_m}(x)\in\tilde{\alpha}(\tilde{B}_{\ddot{2}}\times sK_2t\times\Z/n\Z\times\Z/n\Z)(x)\}),
\end{align*}

\begin{align*}
    \|\mathbb{E}_{\cL_d(G_1)\overline{\otimes}\cM^{\text{op}}\overline{\otimes}\cM\overline{\otimes}\mathbb{M}_n(\C)\overline{\otimes}\mathbb{M}_n(\C)^{\text{op}}}&((u_{\hat{s}}^*\otimes 1\otimes 1)\Delta(u_{\widehat{h_m}})w_{h_m}(u_{\hat{t}}^*\otimes 1\otimes 1))\|_2^2\\
    &=\mu(\{x\in X:\beta_{h_m}(x)\in\tilde{\alpha}(\tilde{B}_{\ddot{1}}\times sK_1t\times\Z/n\Z\times\Z/n\Z)(x)\})
\end{align*}

\noindent converge to zero, as $m\ra \infty$. These yield the desired conclusion.
\end{subproof}

Fix $j\in I$, so that $j\in I_l$ for some $1\leq l\leq 3$, and let $b=(b_1,b_2,b_3)\in \tilde{B}\setminus\{1\}$. Then, $\text{Stab}_{\tilde{B}}(j)=\tilde{B}_{\ddot{l}}\times \text{Stab}_{B_l}(j)$ and $C_{\tilde{B}}(b)=C_{B_1}(b_1)\times C_{B_2}(b_2)\times C_{B_3}(b_3)$. Since $b\neq 1$, there exists $1\leq l\leq 3$ for which $b_l\neq 1$, and hence, $C_{\tilde{B}}(b)\leqslant \tilde{B}_{\ddot{l}}\times C_{B_l}(b_l)$, where $C_{B_l}(b_l)$ is an infinite index subgroup of $B_l$. 

Fix $1\leq i\leq k^2$. Let $X_i\subset X_0$ be a $\beta(B)$-invariant measurable set for which $p_i=\mathbbm{1}_{X_i}$. Since $\beta|_{X_i}$ is free, weak mixing, $B$ has property (T), equation \eqref{betaxsubsetalphax}, the prior paragraph and Claim \ref{setstozero}, we can apply \cite[Theorem 4.1]{cios22} to obtain that $\tilde{\mu}(X_i)=1$ and there exists $\theta_i\in[\mathscr{R}(\tilde{B}\times\Z/n\Z\times\Z/n\Z\curvearrowright^{\tilde{\alpha}}\tilde{X})]$ and an injective group homomorphism $\rho_i=(\rho_{i,1},\rho_{i,2},\rho_{i,3}):B\to \tilde{B}$ such that $\theta_i(X_i)=X\times\{i\}\equiv X$ and $\theta_i\circ\beta(b)|_{X_i}=\tilde{\alpha}(\rho_i(b))\circ\theta_i|_{X_i}$, for all $b\in B$.

Now, let $u_i\in\mathcal N_{\tilde{\mathcal M}}(\tilde{\mathcal{Q})}$ such that $u_iau_i^*=a\circ\phi_i^{-1}$, for every $a\in\tilde{\mathcal{Q}}$. Then, $u_ip_iu_i^*=1\otimes 1\otimes 1\otimes e_i$ and the last relation implies that we can find $(\zeta_{i,b})_{b\in B}\subset\mathscr U(\cL(A^{(I)})\:\overline{\otimes}\:\cL(A^{(I)})^{\text{op}}\:\overline{\otimes}\:\cL(A^{(I)}))$ such that

\begin{equation}\label{conjugat}
    u_i\Delta(u_{\widehat{b}})w_bp_iu_i^*=\zeta_{i,b}u_{(\widehat{\rho_{i,1}(b)},\widehat{\rho_{i,2}(b)^{-1}},\widehat{\rho_{i,3}(b)})}\otimes e_i,\quad \text{for every $b\in B$.}
\end{equation}

\vspace{2mm}

\begin{claim}\label{epsilonsconjugated} $\rho_i$ is conjugate to $\rho_j$ for all $1\leq i,j\leq k^2$.
\end{claim}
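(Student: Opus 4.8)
The plan is to deduce the conjugacy of $\rho_i$ and $\rho_j$ from a (cocycle‑twisted) orbit equivalence between the actions $B\curvearrowright^{\tilde\alpha\circ\rho_i}X$ and $B\curvearrowright^{\tilde\alpha\circ\rho_j}X$, obtained by transporting one onto the other through a partial isometry linking $p_i$ to $p_j$. Fix $1\le i,j\le k^2$. Since $\widetilde{\mathcal M}$ is a II$_1$ factor, $\widetilde{\mathcal Q}$ is a Cartan subalgebra of it, and $p_i,p_j$ are equivalent projections of $\widetilde{\mathcal Q}$, I would first choose a partial isometry $v\in\mathcal N_{\widetilde{\mathcal M}}(\widetilde{\mathcal Q})$ with $v^*v=p_i$ and $vv^*=p_j$, and set $w:=u_jvu_i^*$. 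Then $w$ is a partial isometry in $\mathcal N_{\widetilde{\mathcal M}}(\widetilde{\mathcal Q})$ with $w^*w=1\otimes1\otimes1\otimes e_i$ and $ww^*=1\otimes1\otimes1\otimes e_j$, so it induces a measure‑preserving transformation $\omega_0\in[\mathscr R(\widetilde B\curvearrowright X)]$ once the (fixed) $\Z/n\Z\times\Z/n\Z$‑labels $i,j$ are discarded.

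The next step is to conjugate relation \eqref{conjugat} for the index $i$ by $w$ and compare the outcome, via \eqref{conjugat} for the index $j$, inside the corner $(1\otimes1\otimes1\otimes e_j)\,\widetilde{\mathcal M}\,(1\otimes1\otimes1\otimes e_j)\cong\cL(A^{(I)})^{\,\overline\otimes\,3}\rtimes\widetilde B$. Writing $W_b:=\Delta(u_{\widehat b})w_b$, and using $\beta_b(p_i)=p_i$ (so that $W_bp_i=p_iW_b$) together with the facts that $W_b$ normalizes $\widetilde{\mathcal Q}p$ (as $\widetilde{\mathcal Q}p$ is $\beta(B)$‑invariant) and $v$ normalizes $\widetilde{\mathcal Q}$, one checks that $w\,(u_iW_bp_iu_i^*)\,w^*=u_j\,(vW_bv^*)\,u_j^*$ and that $vW_bv^*=d_b\,(W_bp_j)$ for a unitary $d_b$ of $p_j\widetilde{\mathcal M}p_j$ normalizing $\widetilde{\mathcal Q}p_j$. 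As the scalar corrections $\zeta_{i,b},\zeta_{j,b}$ only contribute unimodular functions, passing to the induced transformations of $X$ yields a family $(\nu_b)_{b\in B}\subset[\mathscr R(\widetilde B\curvearrowright X)]$ with
\[
   \omega_0\,\tilde\alpha(\rho_i(b))\,\omega_0^{-1}=\nu_b\,\tilde\alpha(\rho_j(b)),\qquad b\in B,
\]
and the homomorphism property of $b\mapsto\tilde\alpha(\rho_i(b))$ then shows that $b\mapsto\nu_b$ is a $1$‑cocycle for the action $\tilde\alpha\circ\rho_j$ valued in $[\mathscr R(\widetilde B\curvearrowright X)]$.

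Finally, I would run the orbit‑equivalence rigidity argument once more, exactly as in the proof of \cite[Theorem 1.3]{cios22}. The action $\tilde\alpha\circ\rho_i$ is conjugate via $\theta_i$ to $\beta|_{\widetilde{\mathcal Q}p_i}$, hence free and weakly mixing (the weak mixing being property (c) of $\beta$ recorded above, the freeness following from that of $\tilde\alpha$, which is an external product of free actions by Lemma \ref{weakmixandfree}); moreover $B$ has property (T), and $\tilde\alpha$ is built over $\widetilde B\curvearrowright\bigsqcup_{l=1}^{3}I$ with finite stabilizers. By Popa's cocycle superrigidity \cite{popa07} the cocycle $\nu$ untwists; feeding this, together with the freeness of $\tilde\alpha|_{\widetilde B}$, the weak mixing of $\tilde\alpha\circ\rho_i$, and the escape conditions for the infinite‑index subgroups $\widetilde B_{\ddot{l}}\times C_{B_l}(b_l)$ and $\widetilde B_{\ddot{l}}\times\mathrm{Stab}_{B_l}(j)$ already verified in Claim \ref{setstozero}, into \cite[Theorem 4.1]{cios22}, one concludes that $\omega_0$ agrees almost everywhere with $\tilde\alpha(g_0)$ for a single $g_0\in\widetilde B$, whence $\rho_j=\mathrm{Ad}(g_0)\circ\rho_i$. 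The step I expect to be the main obstacle is the second one: pushing the twisted relations \eqref{conjugat} for the two indices through the corner and keeping precise track of the corrections $\zeta_{i,b},\zeta_{j,b}$ and $\nu_b$ so as to land a genuine cocycle identity; once this is in place, the concluding rigidity input is essentially a repetition of tools already used above.
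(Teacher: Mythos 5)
Your proposed route is genuinely different from the paper's. Rather than re-running the orbit-equivalence rigidity machinery, the paper proceeds by a much more elementary, purely algebraic/combinatorial argument. First it shows that $\rho_{i,1}(B),\rho_{i,2}(B),\rho_{i,3}(B)$ all have finite index in $B$ (via Lemma \ref{intertwiningpropofcom}(a) applied to the relation \eqref{conjugat}); this step is absent from your outline but is used in an essential way. Then it takes a unitary $z\in\mathscr U(\mathcal R)$ with $p_j=zp_iz^*$, uses that $\Delta(u_{\hat b})w_b$ normalizes $\mathcal R$ to write $\Delta(u_{\hat b})w_bp_j=z_b\Delta(u_{\hat b})w_bp_iz^*$ with $z_b\in\mathscr U(\mathcal R)$, and substitutes both instances of \eqref{conjugat}. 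After choosing a finite subset $F\subset B\times B\times B$ so that the Fourier projections $P_{F\rho_i(b)F}$ of the resulting expressions are uniformly bounded below, one reads off $\rho_j(b)\in F\rho_i(b)F$ for all $b$. Since $\rho_{i,k}(B)$ have finite index and $B$ is ICC, the orbits $\{\rho_i(b)g\rho_i(b)^{-1}\}$ are infinite for $g\neq(1,1,1)$, and the conjugacy $\rho_j=\mathrm{Ad}(g)\circ\rho_i$ follows directly from the combinatorial Lemma 7.1 of \cite{bv13}. No cocycle superrigidity is invoked at this stage.

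There are two places where your version, beyond being heavier than necessary, is genuinely incomplete. First, the partial isometry $v$ you pick is in $\mathcal N_{\widetilde{\mathcal M}}(\widetilde{\mathcal Q})$ rather than a unitary of $\mathcal R=\Delta(\mathcal P)'\cap p\widetilde{\mathcal M}p$; the paper's choice is natural because $\mathcal R$ is the algebra where $p_i,p_j$ are manifestly equivalent and which is normalized by $\Delta(u_{\hat b})w_b$, making the computation of the correction term routine. Your version needs $v$ to interact well with the $W_b$'s, and you would have to verify, not merely assert, that $vW_bv^*=d_b\,W_bp_j$ with $d_b$ normalizing $\widetilde{\mathcal Q}p_j$. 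Second, and more seriously, the final step---untwisting the full-group-valued cocycle $\nu_b$ and concluding that $\omega_0$ coincides a.e. with a fixed $\tilde\alpha(g_0)$---is not a direct application of \cite[Theorem 4.1]{cios22} (which starts from an orbit \emph{inclusion}, not an orbit equivalence intertwining two distinct homomorphisms), nor of Popa's cocycle superrigidity without a cocycle reduction step; you would need to state precisely which rigidity theorem yields $\omega_0\in\tilde\alpha(\widetilde B)$. The paper avoids all of this by replacing superrigidity with the finite-index plus Fourier plus Lemma 7.1 of \cite{bv13} argument, which is both simpler and self-contained.
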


\begin{subproof}[Proof of Claim \ref{epsilonsconjugated}] We first show $B_0=\rho_{i,1}(B)$ has finite index in $B$. If this is not true, then $G_0=\varepsilon^{-1}(B_0)$ has infinite index in $G$. On the other hand, equation \eqref{conjugat} gives that $\Delta(\cM)\prec_{\tilde{\cM}}\cL_d(G_0)\:\overline{\otimes}\:\cM^{\text{op}}\:\overline{\otimes}\:\cM\:\overline{\otimes}\:\mathbb{M}_n(\C)\:\overline{\otimes}\:\mathbb{M}_n(\C)^{\text{op}}$, which contradicts Lemma \ref{intertwiningpropofcom}(a). Similarly, $\rho_{i,2}(B)$ and $\rho_{i,3}(B)$ are finite index in $B$.

Let $1\leq i,j\leq k^2$. Since $p_i,p_j\in\mathcal{R}$ are equivalent projections, as they are minimal projections, then $p_j=zp_iz^*$ for $z\in\mathscr{U}(\mathcal{R})$. As $\Delta(u_{\hat{b}})w_b\in\mathscr{U}(p\tilde{\mathcal{M}}p)$ normalizes $\mathcal{R}$, we get that $z_b:=\text{Ad}(\Delta(u_{\hat{b}})w_b)(z)\in\mathscr{U}(\mathcal{R})$. Then,

\begin{equation*}
    \Delta(u_{\hat{b}})w_bp_j=\Delta(u_{\hat{b}})w_bzp_iz^*=z_b\Delta(u_{\hat{b}})w_bp_iz^*.
\end{equation*}

\noindent Using \eqref{conjugat} we get that

\begin{align}\label{conjugatedepsilonsij}
    \zeta_{j,b}(u_{(\widehat{\rho_{j,1}(b)},\widehat{\rho_{j,2}(b)^{-1}},\widehat{\rho_{j,3}(b)})}\otimes e_j)&=u_j\Delta(u_{\hat{b}})w_bp_ju_j^*=u_j(z_b\Delta(u_{\hat{b}})w_bp_iz^*)u_j^*\nonumber\\
    &=u_j(z_b(u_i^*(\zeta_{i,b}u_{(\widehat{\rho_{i,1}(b)},\widehat{\rho_{i,2}(b)^{-1}},\widehat{\rho_{i,3}(b)})}\otimes e_i)u_i)z^*)u_j^*,
\end{align}

\noindent for all $b\in B$. For $b\in B$, denote by $\tilde{\xi}_b=u_j(z_bu_i^*(\zeta_{i,b}u_{(\widehat{\rho_{i,1}(b)},\widehat{\rho_{i,2}(b)^{-1}},\widehat{\rho_{i,3}(b)})}\otimes e_i)u_iz^*)u_j^*$. For a finite subset $F\subset B\times B\times B$, denote by $P_F$ the orthogonal projection from $L^2(\tilde{\cM})$ onto the $\|\cdot\|_2$-closure of the linear span of 

\begin{equation*}
    \{xu_g\otimes y:x\in \cL(A^{(I)})\:\overline{\otimes}\:\cL(A^{(I)})^{\text{op}}\:\overline{\otimes}\:\cL(A^{(I)}),\: g\in (\varepsilon\times\varepsilon\times\varepsilon)^{-1}(F),\: y\in\mathbb{M}_n(\C)\:\overline{\otimes}\:\mathbb{M}_n(\C)^{\text{op}}\}.
\end{equation*}

\noindent Using that $\cL(A^{(I)})\:\overline{\otimes}\:\cL(A^{(I)})^{\text{op}}\:\overline{\otimes}\:\cL(A^{(I)})\subset\cM\:\overline{\otimes}\:\cM^{\text{op}}\:\overline{\otimes}\:\cM$ is a Cartan subalgebra and approximating $u_j,u_i,z$ in $\|\cdot\|_2$, we can find a finite set $F\subset B\times B\times B$ such that $P_{F\rho_i(b)F}(\tilde{\xi}_h)\neq 0$, for all $b\in B$. Since $\zeta_{j,b}\in\mathscr{U}(\cL(A^{(I)})\:\overline{\otimes}\:\cL(A^{(I)})^{\text{op}}\:\overline{\otimes}\:\cL(A^{(I)}))$, \eqref{conjugatedepsilonsij} implies that $\rho_j(b)\in F\rho_i(b)F$ for all $b\in B$. If $g\in B\setminus\{1\}$, then as $B$ is ICC and $\rho_{i,1}(B),\rho_{i,2}(B),\rho_{i,3}(B)$ have finite index in $B$, the set $\{\rho_{i,k}(b)g\rho_{i,k}(b)^{-1}:b\in B\}$ is infinite for every $k=1,2,3$. Thus, the set $\{\rho_i(b)g\rho_i(b)^{-1}:b\in B\}$ is infinite for $g\in\tilde{B}\setminus\{(1,1,1)\}$. By \cite[Lemma 7.1]{bv13}, there exists $g\in B\times B\times B$ with $\rho_j(b)=g\rho_i(b)g^{-1}$ for all $b\in B$.
\end{subproof}

The prior claim gives a homomorphism $\delta=(\delta_1,\delta_2,\delta_3):B\to B\times B\times B$ such that for every $1\leq i\leq k^2$, there exists $g_i\in B\times B\times B$ satisfying $\rho_i(b)=g_i\delta(b)g_i^{-1}$ for all $b\in B$. Hence, after replacing $\theta_i$ with $\alpha(g_i^{-1})\circ\theta_i$, we may assume that $\rho_i=\delta$ and 

\begin{equation*}
    u_i\Delta(u_{\hat{b}})w_bp_iu_i^*=\zeta_{i,h}(u_{(\widehat{\delta_1(b)},\widehat{\delta_2(b)^{-1}},\widehat{\delta_3(b)})}\otimes e_i),\text{ for all }1\leq i\leq k^2,\: b\in B.
\end{equation*}

\noindent Let $u=\sum_{i=1}^{k^2}u_ip_i$, $e=\sum_{i=1}^{k^2}e_i$. Then, $u$ is a partial isometry with $uu^*=1\otimes 1\otimes 1\otimes e$, $u^*u=p$, and $u\tilde{\mathcal{Q}}pu^*=\tilde{\mathcal{Q}}(1\otimes 1\otimes 1\otimes e)$. If $\zeta_b=\sum_{i=1}^{k^2}\zeta_{i,b}\otimes e_i\in\mathscr{U}(\tilde{\mathcal{Q}}(1\otimes 1\otimes 1\otimes e))$, then 

\begin{equation*}
    u\Delta(u_{\hat{b}})w_bu^*=\zeta_b(u_{(\widehat{\delta_1(b)},\widehat{\delta_2(b)^{-1}},\widehat{\delta_3(b)})}\otimes e),\text{ for all }b\in B.
\end{equation*}

\noindent In particular, $t^2=(\tau\otimes\tau^{\text{op}}\otimes\tau\otimes \text{Tr}\otimes\text{Tr}^{\text{op}})(p)=(\text{Tr}\otimes\text{Tr}^{\text{op}})(e)=k^2$. So, $n=t=k$, $e=1$ and $p=1\otimes 1\otimes 1\otimes 1\otimes 1$. Hence, replacing $\Delta$ with $\text{Ad}(u)\circ\Delta$ we may assume that $(\zeta_b)_{b\in B}\subset\mathscr{U}(\tilde{\mathcal{Q}})$ and

\begin{equation}\label{conjug}
    \Delta(u_{\widehat{b}})w_b=\zeta_{b}(u_{(\widehat{\delta_{1}(b)}}\otimes \overline{u}_{\widehat{\delta_{2}(b)^{-1}}}\otimes u_{\widehat{\delta_{3}(b)})}\otimes 1\otimes 1),\quad \text{for every $b\in B$.}
\end{equation}

\vspace{2mm}

\begin{claim}\label{RcontainedtildeQ}
    $\mathcal R \subseteq \cL(A^{(I)})\:\overline{\otimes}\:\cL(A^{(I)})^{\text{op}}\:\overline{\otimes}\:\cL(A^{(I)})\:\overline{\otimes}\:\mathbb{M}_n(\C)\:\overline{\otimes}\:\mathbb{M}_n(\C)^{\text{op}}$.
\end{claim}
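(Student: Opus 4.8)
The plan is to show that $\mathcal{R} = \mathcal{P}' \cap \tilde{\cM}$ is contained in the ``reduced'' algebra $\mathcal{N} := \cL(A^{(I)})\:\overline{\otimes}\:\cL(A^{(I)})^{\text{op}}\:\overline{\otimes}\:\cL(A^{(I)})\:\overline{\otimes}\:\mathbb{M}_n(\C)\:\overline{\otimes}\:\mathbb{M}_n(\C)^{\text{op}}$ by exploiting the $\beta$-equivariance relation \eqref{conjug}, together with the fact (established above via \eqref{PinQinR}) that $\mathcal{P}\subset\tilde{\mathcal Q}\subset\mathcal{R}$ and $\mathcal{R}$ is type I$_n$ with $\tilde{\mathcal Q}=\mathscr{Z}(\mathcal R)\:\overline{\otimes}\:\mathbb{D}_n^{\text{op}}$ a Cartan subalgebra of $\mathcal{R}$, so that $\mathcal{R}$ is generated by $\tilde{\mathcal Q}$ and finitely many elements $x_1,\dots,x_n$. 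Since $\tilde{\mathcal Q}\subset\mathcal{N}$ trivially, it suffices to show any single element $x\in\mathcal{R}$ lies in $\mathcal{N}$.

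First I would use \eqref{conjug}: for each $b\in B$ we have $\Delta(u_{\widehat b})w_b = \zeta_b\,(u_{\widehat{\delta_1(b)}}\otimes\overline u_{\widehat{\delta_2(b)^{-1}}}\otimes u_{\widehat{\delta_3(b)}}\otimes 1\otimes 1)$ with $\zeta_b\in\mathscr{U}(\tilde{\mathcal Q})\subset\mathscr{U}(\mathcal{N})$. Writing $V_b := u_{\widehat{\delta_1(b)}}\otimes\overline u_{\widehat{\delta_2(b)^{-1}}}\otimes u_{\widehat{\delta_3(b)}}\otimes 1\otimes 1$, the conjugation by $\Delta(u_{\widehat b})w_b$ equals $\mathrm{Ad}(\zeta_b)\circ\mathrm{Ad}(V_b)$, and since $\mathcal{R}$ is normalized by $\Delta(u_{\widehat b})w_b$ (these are unitaries in $p\tilde{\mathcal M}p$ normalizing $\mathcal{R}$), and since $\mathrm{Ad}(\zeta_b)$ preserves $\mathcal{N}$, we conclude that $\mathrm{Ad}(V_b)$ maps $\mathcal{R}\cap\mathcal{N}$ into $\mathcal{N}$ and more importantly that $\beta(B)$ essentially acts on $\mathcal{R}$ through the unitaries $V_b$ modulo $\mathcal{N}$. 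Now I expand a generic $x\in\mathcal{R}\ominus\mathcal{N}$ in its Fourier decomposition over the group elements of $G\times G\times G$ (using that $\tilde{\cM}=\bigl(\cL(A^{(I)})\:\overline{\otimes}\:\cL(A^{(I)})^{\text{op}}\:\overline{\otimes}\:\cL(A^{(I)})\bigr)\rtimes(B\times B\times B)$ twisted by matrices): write $x=\sum_{(g_1,g_2,g_3)}x_{g_1,g_2,g_3}$ where the sum ranges over triples with $(\varepsilon(g_1),\varepsilon(g_2),\varepsilon(g_3))$ \emph{not} all trivial (that is the component of $x$ outside $\mathcal{N}$). The relation $V_b\,x\,V_b^* \in \mathcal{N}^{\perp}$-part being $\mathrm{Ad}(\zeta_b)$ of $x$'s $\mathcal{N}^\perp$-part, combined with $x$ commuting with $\mathcal{P}=\Delta(\cL(A^{(I)}))$, forces a weak-mixing/transience argument: using Lemma \ref{weakmixandfree} (weak mixing of $\gamma$ and $\gamma|_B$) and Lemma \ref{intertwiningpropofcom}(b), any nonzero $\mathcal{N}^\perp$-component would produce a $\Delta(\cM)$-subbimodule of $L^2(p\tilde{\cM}p)$ that is right finitely generated but not contained in $L^2(\Delta(\cM))$ — contradicting Lemma \ref{intertwiningpropofcom}(c) — or it would produce an intertwining of $\mathcal{P}$ into one of the forbidden corners of Lemma \ref{intertwiningpropofcom}(b).

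More concretely, the cleanest route mirrors the proof of Claim \ref{weakmixonR}: let $\mathcal{H}\subset L^2(\mathcal{R})$ be the $\|\cdot\|_2$-closure of the span of the $\mathcal{N}^\perp$-components of elements of $\mathcal{R}$ (equivalently, $\mathcal H = L^2(\mathcal R)\ominus L^2(\mathcal R\cap \mathcal N)$). Since $\mathcal R$ is type I$_n$ with separable $\mathscr Z(\mathcal R)$ and $\mathcal R\subset\mathcal P'\cap\tilde{\cM}$, one shows $\mathcal H$ is a right-finitely-generated $\Delta(\cM)$-bimodule: it is a $\Delta(\cM)$-bimodule because $\Delta(\cM)$ normalizes both $\mathcal R$ and (via \eqref{conjug} and $\zeta_b\in\tilde{\mathcal Q}\subset\mathcal N$) the subalgebra $\mathcal R\cap\mathcal N$; and it is right finitely generated because $(\mathcal R)_1\subseteq\sum_{i=1}^n(\tilde{\mathcal Q}p)_1 x_i$ with $\tilde{\mathcal Q}p\subset\mathcal N$, so $\mathcal H$ is generated as a right $\Delta(\cM)$-module by the finitely many $\mathcal{N}^\perp$-components of $x_1,\dots,x_n$. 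Applying Lemma \ref{intertwiningpropofcom}(c) gives $\mathcal H\subset L^2(\Delta(\cM))$. But $\Delta(\cM)\subset\mathcal N$: indeed $\Delta(u_g)$ for $g\in G$ decomposes, via $g = a\widehat b$ with $a\in A^{(I)}$ and \eqref{conjug}, as $\Delta(u_a)$ times $\zeta_b V_b w_b^*$ — and $\Delta(u_a)\in\mathcal P\subset\tilde{\mathcal Q}\subset\mathcal N$, while $w_b\in\mathcal R$; here one needs the additional observation that the $w_b$ can be taken in $\mathcal R\cap\mathcal N$ after the earlier reductions, or argue directly that $\Delta(\cM)\subset\mathcal R\cap\mathcal N$ using $\Delta(\cM)\subset\mathcal P'\cap\tilde\cM$ together with the fact that $\Delta(\cM)$'s Fourier support over $B\times B\times B$ is carried by the graph of $\delta$ which, combined with commutation with $\mathcal P$ and Lemma \ref{weakmixandfree}, lands in $\mathcal N$. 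Either way, $\mathcal H\subset L^2(\mathcal N)\cap L^2(\mathcal R)^\perp\cap\ldots$ forces $\mathcal H=0$, i.e. $\mathcal R=\mathcal R\cap\mathcal N\subseteq\mathcal N$.

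The main obstacle I anticipate is the bookkeeping around the $w_b$'s and showing cleanly that $\Delta(\cM)\subseteq\mathcal N$ (or at least that the relevant bimodule lies in $L^2(\mathcal N)$): the unitaries $w_b\in\mathscr U(\mathcal R)$ were introduced abstractly in item (a) following \eqref{PinQinR}, and a priori they need not lie in $\mathcal N$, so one must either track them carefully through the normalization relations or, preferably, bypass them by phrasing everything in terms of the $\Delta(\cM)$-bimodule $\mathcal H$ and invoking Lemma \ref{intertwiningpropofcom}(c) directly, as sketched above. A secondary technical point is verifying that $\mathcal H$ is genuinely right finitely generated rather than merely countably generated — this uses the type I$_n$ structure of $\mathcal R$ (finite $n$) in an essential way, via the decomposition $\mathcal{R}=\mathscr{Z}(\mathcal{R})\:\overline{\otimes}\:\mathbb{M}_n(\C)^{\text{op}}$ and $(\mathcal{R})_1\subseteq\sum_{i=1}^n(\tilde{\mathcal Q}p)_1x_i$ recorded above.
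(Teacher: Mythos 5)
Your proposal takes a genuinely different route from the paper's, but it contains several gaps that, as far as I can tell, cannot be closed without essentially reverting to the paper's argument.

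The paper's proof is a weak-mixing/Fourier argument: since the unitaries $V_b := u_{(\widehat{\delta_1(b)},\widehat{\delta_2(b)^{-1}},\widehat{\delta_3(b)})}\otimes 1\otimes 1$ normalize $\mathcal{R}$ (a direct consequence of \eqref{conjug}, the normalization of $\mathcal R$ by $\Delta(u_{\hat b})w_b$, and $\zeta_b\in\tilde{\mathcal Q}$), and since $(\mathcal{R})_1\subseteq\sum_{i=1}^k(\tilde{\mathcal{Q}}p)_1 x_i$ gives a \emph{uniform} finite Fourier support (over $B\times B\times B$) approximation of the unit ball of $\mathcal{R}$, one shows that the sequence $(h_m)$ from Claim \ref{setstozero} forces $\|\mathbb{E}_{\cL(A^{(I)})\overline{\otimes}\cL(A^{(I)})^{\text{op}}\overline{\otimes}\cL(A^{(I)})}(xV_{h_m}zV_{h_m}^*y)\|_2\to 0$ for every $z$ orthogonal to the base and every $x,y$. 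Conjugation by $V_{h_m}$ preserves the $\|\cdot\|_2$-norm of the base-orthogonal component of any $r\in(\mathcal R)_1$, so this mixing kills that component. You do not attempt this Fourier/centralizer analysis at all.

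Your route has three concrete problems. First, the space $\mathcal{H}=L^2(\mathcal{R})\ominus L^2(\mathcal{R}\cap\mathcal{N})$ is not shown to be a $\Delta(\cM)$-sub-bimodule: while $\Delta(\cM)$ normalizes $\mathcal{R}$, there is no reason for $\Delta(\cM)$ to normalize $\mathcal{R}\cap\mathcal{N}$ \emph{before} the claim is proved. From \eqref{conjug} one has $\Delta(u_{\hat b}) = \zeta_b V_b w_b^*$, and $\mathrm{Ad}(\zeta_b V_b)$ does preserve $\mathcal{N}$, but $w_b\in\mathscr U(\mathcal R)$ has no a priori reason to lie in $\mathcal{N}$; that is exactly what the claim is meant to give. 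You flag this yourself, but do not resolve it — and it is circular, not merely a bookkeeping nuisance. Second, your assertion of right finite generation of $\mathcal{H}$ over $\Delta(\cM)$ does not follow from $(\mathcal{R})_1\subseteq\sum_i(\tilde{\mathcal{Q}}p)_1 x_i$: this decomposition shows $\mathcal{R}$ is finite over $\tilde{\mathcal{Q}}p$, but $\tilde{\mathcal{Q}}p$ is \emph{not} a subalgebra of $\Delta(\cM)$ (the copy $\mathcal P=\Delta(\cL(A^{(I)}))$ is a small diagonal in $\tilde{\mathcal{Q}}p$). In Claim \ref{weakmixonR} the right finite generation over $\Delta(\cM)$ came from $\mathcal{H}$ there being \emph{finite dimensional}, a feature absent from your $\mathcal{H}$. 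Third, the assertion ``$\Delta(\cM)\subset\mathcal{N}$'' is simply false: $\Delta(u_g)$ for $g\notin A^{(I)}$ has nontrivial Fourier support in $B\times B\times B$, so it is not in the base algebra $\cL(A^{(I)})\overline{\otimes}\cL(A^{(I)})^{\text{op}}\overline{\otimes}\cL(A^{(I)})\overline{\otimes}\mathbb{M}_n\overline{\otimes}\mathbb{M}_n^{\text{op}}$. The correct way to conclude from $\mathcal{H}\subset L^2(\Delta(\cM))$ would be (as in Claim \ref{weakmixonR}) to use that $\mathcal{H}$ commutes with $\mathcal{P}$ to land in $L^2(\mathcal{P})$, and $\mathcal{P}\subset\mathcal{N}$; but this conclusion only becomes available once the bimodule structure and finite generation hypotheses are genuinely verified, which they are not here.
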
 

\begin{subproof}[Proof of Claim \ref{RcontainedtildeQ}] Since $(\Delta(u_{\hat{b}})w_b)_{b\in B}\subset\mathscr{U}(\tilde{\mathcal{M}})$ normalizes $\mathcal{R}$ and $(\zeta_b)_{b\in B}\subset\mathscr{U}(\tilde{\mathcal{Q}})$, equation \eqref{conjug} shows that $(u_{(\widehat{\delta_{1}(b)},\widehat{\delta_{2}(b)^{-1}},\widehat{\delta_{3}(b)})}\otimes 1\otimes 1)_{b\in B}$ normalizes $\mathcal{R}$. Thus, to prove the claim it suffices to argue that for every $x,y\in\cM\:\overline{\otimes}\:\cM^{\text{op}}\:\overline{\otimes}\:\cM$ and $z\in \left (\cM\:\overline{\otimes}\:\cM^{\text{op}}\:\overline{\otimes}\:\cM\right )\ominus \left (\cL(A^{(I)})\:\overline{\otimes}\:\cL(A^{(I)})^{\text{op}}\:\overline{\otimes}\:\cL(A^{(I)})\right)$ the sequence $(h_m)_m\subset B$ from Claim \ref{setstozero} satisfies

\begin{equation*}
    \|\mathbb{E}_{\cL(A^{(I)})\:\overline{\otimes}\:\cL(A^{(I)})^{\text{op}}\:\overline{\otimes}\:\cL(A^{(I)})}(xu_{(\widehat{\delta_{1}(h_m)},\widehat{\delta_{2}(h_m)^{-1}},\widehat{\delta_{3}(h_m)})}zu_{(\widehat{\delta_{1}(h_m)},\widehat{\delta_{2}(h_m)^{-1}},\widehat{\delta_{3}(h_m)})}^*y)\|_2\to 0.
\end{equation*}

\noindent Assume $x=u_a$, $y=u_b$ and $z=u_g$ for $a,b,g\in G\times G\times G$ and $g\not\in A^{(I)}\times A^{(I)}\times A^{(I)}$. Write $(\varepsilon\times\varepsilon\times\varepsilon)(a)=(a_1,a_2,a_3)$, $(\varepsilon\times\varepsilon\times\varepsilon)(b)=(b_1,b_2,b_3)$ and $(\varepsilon\times\varepsilon\times\varepsilon)(g)=(g_1,g_2,g_3)$. Since $(g_1,g_2,g_3)\neq (1,1,1)$, we have that $g_i\neq 1$ for some $i=1,2,3$.

Suppose $g_1\neq 1$. For $h\in B$ denote 

$$s_h=\|\mathbb{E}_{\cL(A^{(I)})\:\overline{\otimes}\:\cL(A^{(I)})^{\text{op}}\:\overline{\otimes}\:\cL(A^{(I)})}(xu_{(\widehat{\delta_{1}(h)},\widehat{\delta_{2}(h)^{-1}},\widehat{\delta_{3}(h)})}zu_{(\widehat{\delta_{1}(h)},\widehat{\delta_{2}(h)^{-1}},\widehat{\delta_{3}(h)})}^*y)\|_2.$$

\noindent If $s_h=0$ for all $h\in B$, then the assertion follows. Otherwise, if $s_h\neq 0$ for $h\in B$, then $a_1\delta_1(h)g_1\delta_1(h)^{-1}b_1=1$. In particular, there exists $l\in B$ with $a_1lg_1l^{-1}b_1=1$. If $s_{h_m}\neq 0$ for some $m\in\N$, then $\delta_1(h_m)g_1\delta_1(h_m)^{-1}=a_1^{-1}b_1^{-1}=lg_1l^{-1}$ and therefore, $\delta_1(h_m)\in lB_0$ for $B_0=C_B(g_1)$. 

Let $G_0=\varepsilon^{-1}(B_0)$. By \eqref{conjug} we obtain that $\Delta(u_{h_m})w_{h_m}\in (u_{\hat{l}}\otimes 1\otimes 1)(\cL_d(G_0)\:\overline{\otimes}\:\cL_d(G)^{\text{op}}\:\overline{\otimes}\:\cL_d(G))$ for any such $m$. Since $g_1\neq 1$ and $B$ is ICC, $B_0\leqslant B$ is infinite index. Thus, \eqref{Deltaw_htozero} implies that $\{m\in\N:s_{h_m}\neq 0\}$ is finite, proving that $s_{h_m}\to 0$.

The claim follows similarly if $g_3\neq 1$. Now, assume $g_2\neq 1$. Suppose $s_{h_m}\neq 0$ for some $m\in\N$. Then, $b_2\delta_2(h_m)g_2\delta_2(h_m)^{-1}a_2=1$. In particular, there exists $f\in B$ with $b_2fg_2f^{-1}a_2=1$, and hence, $\delta_2(h_m)g_2\delta_2(h_m)^{-1}=b_2^{-1}a_2^{-1}=fg_2f^{-1}$, and therefore, $\delta_2(h_m)\in fB_0$ for $B_0=C_B(g_2)$. Let $G_0=\varepsilon^{-1}(B_0)$. By \eqref{conjug} we obtain that $\Delta(u_{\widehat{h_m}})w_{h_m}\in (\cL_d(G)\:\overline{\otimes}\:\cL_d(G_0)^{\text{op}}\overline{\otimes}\:\cL_d(G))(1\otimes u_{\widehat{f}}\otimes 1)$ for any such $m$. Thus, \eqref{Deltaw_htozero} implies that $\{m\in\N:s_{h_m}\neq 0\}$ is finite, proving that $s_{h_m}\to 0$, as $m\ra \infty$.
\end{subproof}

Next, the prior claim implies $w_b\in\cL(A^{(I)})\:\overline{\otimes}\:\cL(A^{(I)})^{\text{op}}\:\overline{\otimes}\:\cL(A^{(I)})\:\overline{\otimes}\:\mathbb{M}_n(\C)\:\overline{\otimes}\:\mathbb{M}_n(\C)^{\text{op}}$ for every $b\in B$. Thus, $\eta_b:=\zeta_b\text{Ad}(u_{(\widehat{\delta_{1}(b)},\widehat{\delta_{2}(b)^{-1}},\widehat{\delta_{3}(b)})})(w_b^*)\in\cL(A^{(I)})\:\overline{\otimes}\:\cL(A^{(I)})^{\text{op}}\:\overline{\otimes}\:\cL(A^{(I)})\:\overline{\otimes}\:\mathbb{M}_n(\C)\:\overline{\otimes}\:\mathbb{M}_n(\C)^{\text{op}}$ and also 

\begin{equation*}
    \Delta(u_{\hat{b}})=\eta_b(u_{\widehat{\delta_1(b)}}\otimes \overline u_{\widehat{\delta_2(b)^{-1}}}\otimes u_{\widehat{\delta_3(b)}}\otimes 1\otimes 1),\quad\text{for every } b\in B.
\end{equation*}

\vspace{2mm}

\begin{claim}\label{assumemapsareidentity} We may assume that $\delta_1=\delta_2=\delta_3={\rm Id}_B$.
\end{claim}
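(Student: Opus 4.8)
The plan is to eliminate the three (possibly distinct) homomorphisms $\delta_1,\delta_2,\delta_3\colon B\to B$ one after another, using the flip properties \eqref{flipproperty} of the triple comultiplication together with the position of $\Delta$ obtained so far. First I would apply the intertwining relation $(\Delta_0\otimes\mathrm{id}\otimes\mathrm{id})\circ\Delta_0=(\mathrm{id}\otimes\mathrm{id}\otimes\Delta_0)\circ\Delta_0$ to the element $\Delta(u_{\hat b})$ and compare the two sides using the explicit formula $\Delta(u_{\hat b})=\eta_b(u_{\widehat{\delta_1(b)}}\otimes\overline u_{\widehat{\delta_2(b)^{-1}}}\otimes u_{\widehat{\delta_3(b)}}\otimes 1\otimes 1)$, where the correction $\eta_b$ lies in the ``abelian part'' $\cL(A^{(I)})\:\overline\otimes\:\cL(A^{(I)})^{\mathrm{op}}\:\overline\otimes\:\cL(A^{(I)})\:\overline\otimes\:\mathbb M_n(\C)\:\overline\otimes\:\mathbb M_n(\C)^{\mathrm{op}}$. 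Since $\Delta_0$ sends $v_h$ to a product of $v_{h^{\pm1}}$'s in the three legs, applying it componentwise to the formula for $\Delta(u_{\hat b})$ produces, on each side, a word in the group unitaries $v_{\widehat{\delta_k(b)}^{\pm1}}$ in the various tensor legs, twisted by elements of $\cL(A^{(I)})$-type subalgebras; matching the $B$-components of these words (after projecting onto the appropriate Cartan subalgebra as in Claim \ref{RcontainedtildeQ}) forces relations among $\delta_1,\delta_2,\delta_3$.

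The cleanest route is to exploit that $\Delta_0$ applied to $u_g$ for $g\in G$ (as opposed to $u_{\hat b}$) is known — indeed the whole point of Lemma \ref{positiveheight} — so I would first combine \eqref{conjug}/the last displayed formula with the behaviour of $\Delta_0$ on the base subalgebra $\cL(A^{(I)})$, which is abelian and on which $d$ is trivial. This should pin down, via the star-flip identity $F_{1,2}\circ(\Delta_0\otimes\mathrm{id}\otimes\mathrm{id})\circ\Delta_0=F_{3,4}\circ(\mathrm{id}\otimes\mathrm{id}\otimes\Delta_0)\circ\Delta_0$, that the first and third legs carry the ``same'' homomorphism up to the flip, i.e.\ $\delta_1=\delta_3$ and $\delta_2$ is the composition of $\delta_1$ with the inversion coming from the $\mathrm{op}$ leg; concretely one expects $\widehat{\delta_2(b)^{-1}}$ to play symmetrically the role that $\widehat{b^{-1}}$ plays in \eqref{3commul}, forcing $\delta_2=\delta_1$. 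Once all three agree with a single homomorphism $\delta_0\colon B\to B$, I would show $\delta_0$ is surjective — because $\delta_0(B)=\rho_{i,1}(B)$ has finite index in $B$ (already established in the proof of Claim \ref{epsilonsconjugated}) and, being the image of an ICC hyperbolic-type group under a map that must also be injective (again from Claim \ref{epsilonsconjugated}), a finite-index self-embedding of such a $B$; here I would invoke the co-Hopfian-type rigidity for ICC subgroups of hyperbolic groups, or more elementarily re-run the intertwining obstruction Lemma \ref{intertwiningpropofcom}(a) to rule out $\delta_0(B)$ being proper of infinite index, together with Hopficity to rule out proper finite index. Composing $\Delta$ with $\mathrm{Ad}$ of a suitable $u_{\widehat{g}}$ and with the automorphism of $\cL_d(G)$ induced by $\delta_0^{-1}$, we may then assume $\delta_1=\delta_2=\delta_3=\mathrm{Id}_B$.

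The step I expect to be the main obstacle is the symmetry-forcing argument, i.e.\ extracting $\delta_1=\delta_2=\delta_3$ from the flip identities: the corrections $\eta_b$ (and the nested corrections produced when $\Delta_0$ is applied to them) live in genuinely noncommutative ambient algebras $\mathbb M_n(\C)\:\overline\otimes\:\mathbb M_n(\C)^{\mathrm{op}}$, so one cannot just ``read off'' group elements naively; one must project onto the relevant Cartan/group-von-Neumann subalgebras and track the $B$-support carefully, using that $B$ is ICC to discard infinite conjugacy classes (the same mechanism used in Claims \ref{setstozero} and \ref{RcontainedtildeQ}). A secondary subtlety is that the identity \eqref{flipproperty} is stated for $\Delta_0$ and its amplification, so one must propagate it through the $\mathrm{Ad}(U)\circ\psi\circ(\Delta\otimes\mathrm{Id}_n)$ decomposition \eqref{Delta0}; this is bookkeeping but needs care about where the matrix units $e_i$ and the unitary $U$ sit. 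I would structure the write-up as: (i) record the flip identities for $\Delta$; (ii) plug in the formula for $\Delta(u_{\hat b})$; (iii) project and match $B$-supports to get $\delta_1=\delta_2=\delta_3=:\delta_0$; (iv) prove $\delta_0\in\Aut(B)$; (v) absorb $\delta_0$ by composing with an automorphism of $\cL_d(G)$ and conclude.
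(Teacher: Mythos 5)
The overall scaffolding you outline (flip identities, project onto group-von-Neumann ``coordinates'', use ICC of $B$, absorb by conjugation) matches the paper, but your steps (iv) and (v) go in the wrong direction and miss the mechanism that actually makes the argument work.

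The paper does not prove $\delta_0\in\Aut(B)$, and it does not absorb $\delta_0$ by composing with an automorphism of $\cL_d(G)$. That route would be problematic: an automorphism of $B$ need not lift to an automorphism of the wreath-like product $G$, and even if it did, it would typically change the $2$-cocycle $d$, so ``composing with the automorphism of $\cL_d(G)$ induced by $\delta_0^{-1}$'' is not an available move. The key step you are missing is the use of coassociativity itself (the unflipped identity $(\Delta_0\otimes\mathrm{id}\otimes\mathrm{id})\circ\Delta_0=(\mathrm{id}\otimes\mathrm{id}\otimes\Delta_0)\circ\Delta_0$) to produce a \emph{recursion}: after first arguing (via the flip automorphism $x\otimes y\otimes z\mapsto z\otimes y\otimes x$, exactly as in Step 6 of the proof of \cite[Theorem 1.3]{cios22}) that $\delta_1$ and $\delta_3$ are conjugate, so that one may take $\delta_1=\delta_3=:\delta$, one applies coassociativity and the finite-support projection argument to extract $\delta(\delta(b))F_1\cap F_1\delta(b)\ne\emptyset$ for all $b\in B$. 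Because $\delta(B)$ has finite index in $B$ (already known from Claim \ref{epsilonsconjugated}) and $B$ is ICC, \cite[Lemma 7.1]{bv13} then forces $\delta(b)=lbl^{-1}$ for some fixed $l\in B$. This makes $\delta$ inner, so it is absorbed simply by replacing $\Delta$ with $\mathrm{Ad}(u_{l^{-1}}\otimes 1\otimes u_{l^{-1}}\otimes 1\otimes 1)\circ\Delta$ — a unitary conjugation inside $\tilde{\cM}$, not a change of the source algebra. The star-flip identity $F_{1,2}\circ(\Delta_0\otimes\mathrm{id}\otimes\mathrm{id})\circ\Delta_0=F_{3,4}\circ(\mathrm{id}\otimes\mathrm{id}\otimes\Delta_0)\circ\Delta_0$ is then used in a second, separate pass to show $\delta_2$ is likewise conjugation by some $k\in B$, absorbed by $\mathrm{Ad}(1\otimes u_k\otimes 1\otimes 1\otimes 1)$. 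Your single-pass ``match $B$-supports and conclude $\delta_1=\delta_2=\delta_3$'' is too optimistic: the flips only give conjugacy relations among the $\delta_i$'s, and it is the recursion $\delta\circ\delta\sim\delta$ coming from coassociativity, together with the ICC/finite-index hypotheses feeding into \cite[Lemma 7.1]{bv13}, that identifies $\delta$ with an inner automorphism; co-Hopficity of $B$ plays no role.
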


\begin{subproof}[Proof of Claim \ref{assumemapsareidentity}] First, we argue that we may assume that $\delta_1=\delta_3=\text{Id}_B$. Using the flip automorphism $x\otimes y\otimes z\mapsto z\otimes y \otimes x$ and the same argument form the first part of Step 6 in the proof of \cite[Theorem 1.3]{cios22} we obtain that $\delta_1$ is conjugated to $\delta_3$ by a group element $g\in B$. Hence, after replacing $\Delta$ with $\text{Ad}(1\otimes 1\otimes u_{g^{-1}}\otimes 1\otimes 1)\circ \Delta$ and $\eta_b$ by $\text{Ad}(1\otimes 1\otimes u_{g^{-1}}\otimes 1\otimes 1)(\eta_b)d(g^{-1},g\widehat{\delta_1(b)}g^{-1})d(\widehat{\delta_1(b)}g^{-1},g)$, we may assume that $\delta_1=\delta_3=:\delta$ and 

\begin{equation*}
    \Delta(u_{\hat{b}})=\eta_b(u_{\widehat{\delta(b)}}\otimes \overline u_{\widehat{\delta_2(b)^{-1}}}\otimes u_{\widehat{\delta(b)}}\otimes 1\otimes 1),\quad\text{for all } b\in B.
\end{equation*}

\noindent Following a similar argument as in \cite[Theorem 1.3, Step 6]{cios22}, we let $X_1=(U\otimes 1\otimes 1\otimes 1\otimes 1)^*(\Delta_0\otimes\text{id}\otimes\text{id})(U^*)$, $X_{1,b}=(\Delta_0\otimes\text{id}\otimes\text{id})(U\psi(\eta_b\otimes 1))(U\psi(\eta_{\delta(b)}\otimes 1)\otimes 1\otimes 1\otimes 1\otimes 1)$, $X_2=(1\otimes 1\otimes 1\otimes 1\otimes U)^*(\text{id}\otimes\text{id}\otimes\Delta_0)(U^*)$ and $X_{2,b}=(\text{id}\otimes\text{id}\otimes\Delta_0)(U\psi(\eta_b\otimes 1))(1\otimes 1\otimes 1\otimes 1\otimes U\psi(\eta_{\delta(b)}\otimes 1))$, and we obtain 

\begin{align*}
    (\Delta_0\otimes\text{id}\otimes\text{id})\Delta_0(u_{\hat{b}}\otimes 1)=X_{1,b}(u_{\widehat{\delta(\delta(b))}}\otimes 1\otimes \overline{u}_{\widehat{\delta_2(\delta(b))^{-1}}}\otimes 1\otimes u_{\widehat{\delta(\delta(b))}}\otimes 1\otimes \overline{u}_{\widehat{\delta_2(b)^{-1}}}\otimes 1 \otimes u_{\widehat{\delta(b)}}\otimes 1)X_1,
\end{align*}

\noindent and

\begin{align*}
    (\text{id}\otimes\text{id}\otimes\Delta_0)\Delta_0(u_{\hat{b}}\otimes 1)=X_{2,b}(u_{\widehat{\delta(b)}}\otimes 1 \otimes \overline{u}_{\widehat{\delta_2(b)^{-1}}}\otimes 1\otimes u_{\widehat{\delta(\delta(b))}}\otimes 1\otimes \overline{u}_{\widehat{\delta_2(\delta(b))^{-1}}}\otimes 1\otimes u_{\widehat{\delta(\delta(b))}}\otimes 1)X_2.
\end{align*}

\noindent Thus \eqref{flipproperty} in combination with the two relations above show for every $b\in B$ we have

\begin{align}\label{equaltensors1}
    (u_{\widehat{\delta(\delta(b))}}&\otimes 1 \otimes \overline u_{\widehat{\delta_2(\delta(b))^{-1}}}\otimes 1\otimes u_{\widehat{\delta(\delta(b))}}\otimes 1\otimes  \overline u_{\widehat{\delta_2(b)^{-1}}}\otimes 1\otimes u_{\widehat{\delta(b)}}\otimes 1)X_1X_2^*\nonumber\\
    &=X_{1,b}^*X_{2,b}(u_{\widehat{\delta(b)}}\otimes 1\otimes \overline u_{\widehat{\delta_2(b)^{-1}}}\otimes 1\otimes u_{\widehat{\delta(\delta(b))}}\otimes 1\otimes \overline u_{\widehat{\delta_2(\delta(b))^{-1}}}\otimes 1\otimes u_{\widehat{\delta(\delta(b))}}\otimes 1).
\end{align}

\noindent Denote by $\mathscr M :=\cM\:\overline{\otimes}\:\mathbb{M}_n(\C)\:\overline{\otimes}\:\cM^{\text{op}}\:\overline{\otimes}\:\mathbb{M}_n(\C)^{\text{op}}\:\overline{\otimes}\:\cM\:\overline{\otimes}\:\mathbb{M}_n(\C)\:\overline{\otimes}\:\cM^{\text{op}}\:\overline{\otimes}\:\mathbb{M}_n(\C)^{\text{op}}\:\overline{\otimes}\:\cM\:\overline{\otimes}\:\mathbb{M}_n(\C)$ and $\mathscr Q:=\cL(A^{(I)})\:\overline{\otimes}\:\mathbb{M}_n(\C)\:\overline{\otimes}\:\cL(A^{(I)})^{\text{op}}\:\overline{\otimes}\:\mathbb{M}_n(\C)^{\text{op}}\:\overline{\otimes}\:\cL(A^{(I)})\:\overline{\otimes}\:\mathbb{M}_n(\C)\:\overline{\otimes}\:\cL(A^{(I)})^{\text{op}}\:\overline{\otimes}\:\mathbb{M}_n(\C)^{\text{op}}\:\overline{\otimes}\:\cL(A^{(I)})\:\overline{\otimes}\:\mathbb{M}_n(\C)$. Also let $\vardbtilde{B}:=B \times B \times B\times B \times B$.

For a set $F=F_1\times \cdots\times F_5\subset \vardbtilde{B}$, let $\mathcal{H}_{F}$ be the $\|\cdot\|_2$-closure of the linear span of 

\begin{equation*}
    \{u_{g_1}\otimes x_1\otimes \overline{u}_{g_2}\otimes \overline{x}_2\otimes u_{g_3}\otimes x_3\otimes \overline{u}_{g_4}\otimes \overline{x}_4\otimes u_{g_5}\otimes x_5:g_i\in\varepsilon^{-1}(F_i),x_i\in\mathbb{M}_n(\C), 1\leq i\leq 5\}
\end{equation*}

\noindent and $P_{F}$ be the orthogonal projection from $L^2(\mathscr M)$ onto $\mathcal{H}_{F}$. For later use, notice that for every pair of subsets $G=G_1\times \cdots \times G_5, \: F =F_1\times \cdots \times F_5\subset \vardbtilde{B}$ the following hold:

\begin{enumerate}
    \item\label{orthogonalproj} If $F_j\cap G_j =\emptyset$ for some $1\leq j\leq 5$ then  we have $P_F P_G =0$.
    \item\label{secondproperty} For every $(g_1,\ldots,g_5),\: (h_1,\ldots, h_5)\in \vardbtilde{B}$ we have  
    
    \begin{align*}
        (u_{g_1}\otimes 1\otimes &\:\overline u_{g_2} \otimes 1\otimes u_{g_3} \otimes 1 \otimes \overline u_{g_4} \otimes 1\otimes u_{g_5}\otimes 1)\mathcal H_F (u_{h_1}\otimes 1\otimes \overline u_{h_2} \otimes 1\otimes u_{h_3} \otimes 1\otimes \overline u_{h_4} \otimes 1\otimes u_{h_5}\otimes 1)\\
        &=\mathcal H_{g_1F_1h_1\times h_2F_2g_2\times g_3F_3h_3\times h_4F_4g_4\times g_5F_5h_5}.
    \end{align*}
\end{enumerate}

\noindent Next, since $\mathcal{H}_{F}$ is a $\mathscr Q$-bimodule, using the definitions of $\eta_b$, $X_{1,b}$, $X_{2,b}$, $X_1$, and $X_2$ one can find a finite set $F=F_1\times \cdots\times F_5\subset \vardbtilde{B}$ such that 

\begin{equation*}
    \| X_1X_2^*-P_F(X_1X_2^*)\|_2<1/4, \text{ and } \|X_{1,b}^*X_{2,b}-P_{F}(X_{1,b}^*X_{2,b})\|_2<1/4,\text{ for every }b\in B.
\end{equation*} 

\noindent These inequalities combined with \eqref{equaltensors1} show that for all $b\in B$ we have 

\begin{align*}
    \langle P_{F}&(X_{1,b}^*X_{2,b})(u_{\widehat{\delta(b)}}\otimes 1\otimes \overline u_{\widehat{\delta_2(b)^{-1}}}\otimes 1\otimes u_{\widehat{\delta(\delta(b))}}\otimes 1\otimes \overline u_{\widehat{\delta_2(\delta(b))^{-1}}}\otimes1\otimes u_{\widehat{\delta(\delta(b))}}\otimes 1),\\
    &\quad\quad (u_{\widehat{\delta(\delta(b))}}\otimes 1\otimes \overline u_{\widehat{\delta_2(\delta(b))^{-1}}}\otimes 1\otimes u_{\widehat{\delta(\delta(b))}}\otimes 1\otimes\overline u_{\widehat{\delta_2(b)^{-1}}}\otimes 1\otimes u_{\widehat{\delta(b)}}\otimes 1)P_{F}(X_1X_2^*)\rangle \geq1/2.
\end{align*}

\noindent The prior relation together with properties \eqref{orthogonalproj} and \eqref{secondproperty} above imply that $\delta(\delta(b))F_1\cap  F_1\delta(b)\neq \emptyset$, for every $b\in B$. Since $\delta(B)\leqslant B$ has finite index and $B$ is ICC, by \cite[Lemma 7.1]{bv13} one can find $l\in B$ such that $\delta(b)=lbl^{-1}$, for every $b\in B$. Thus, after replacing $\Delta$ with $\text{Ad}(u_{l^{-1}}\otimes 1\otimes u_{l^{-1}}\otimes 1\otimes)\circ\Delta$ and $\eta_b$ with $\text{Ad}(u_{l^{-1}}\otimes 1\otimes u_{l^{-1}}\otimes 1\otimes)(\eta_b)d(l^{-1},lbl^{-1})^2d(bl^{-1},l)^2$, we have

\begin{equation*}
    \Delta(u_{\hat{b}})=\eta_b(u_{\hat{b}}\otimes \overline u_{\widehat{\delta_2(b)^{-1}}}\otimes u_{\hat{b}}\otimes 1\otimes 1),\quad\text{for all }b\in B.
\end{equation*}

\noindent To show $\delta_2$ can be conjugated to the identity we follow the argument from \cite[Claim 8.11]{cfqt23}. 

\begin{align*}
    F_{1,2}\:\circ &\:(\Delta_0\otimes\text{id}\otimes\text{id})\Delta_0(u_{\hat{b}}\otimes 1)\\
    &=F_{1,2}(X_{1,b})(u_{\widehat{\delta_2(b)^{-1}}}^*\otimes 1\otimes \overline {u_{\hat{b}}^*}\otimes 1\otimes u_{\hat{b}}\otimes 1\otimes \overline u_{{\widehat{\delta_2(b)^{-1}}}}\otimes 1\otimes u_{\hat{b}}\otimes 1)F_{1,2}(X_{1}).
\end{align*}

\noindent Similarly, 

\begin{align*}
    F_{3,4}\:\circ &\: (\text{id}\otimes\text{id}\otimes\Delta_0)\Delta_0(u_{\hat{b}}\otimes 1)\\
    &=F_{3,4}(X_{2,b})(u_{\hat{b}}\otimes 1\otimes \overline u_{\widehat{\delta_2(b)^{-1}}}\otimes 1\otimes u_{\widehat{\delta_2(b)^{-1}}}^*\otimes 1\otimes \overline{u_{\hat{b}}^*}\otimes 1\otimes u_{\hat{b}}\otimes 1)F_{3,4}(X_{2}).
\end{align*}

\noindent Using these relations together with \eqref{flipproperty} we get that for all $b\in B$ we have 

\begin{align}\label{equaltensor3}
    (u_{\widehat{\delta_2(b)^{-1}}}^*&\otimes 1\otimes \overline {u_{\hat{b}}^*}\otimes 1\otimes u_{\hat{b}}\otimes 1\otimes \overline u_{{\widehat{\delta_2(b)^{-1}}}}\otimes 1\otimes u_{\hat{b}}\otimes 1) F_{1,2}(X_{1})F_{3,4}(X_{2})^*\nonumber\\
    &= F_{1,2}(X_{1,b})^*F_{3,4}(X_{2,b}) (u_{\hat{b}}\otimes 1\otimes \overline u_{\widehat{\delta_2(b)^{-1}}}\otimes 1\otimes u_{\widehat{\delta_2(b)^{-1}}}^*\otimes 1\otimes \overline{u_{\hat{b}}^*}\otimes 1\otimes u_{\hat{b}}\otimes 1).
\end{align}

\noindent Let $F=F_1\times \cdots\times F_5\subset \vardbtilde{B}$ be a finite set for which $\|F_{1,2}(X_1)F_{3,4}(X_2)^*-P_F(F_{1,2}(X_1)F_{3,4}(X_2)^*)\|_2<1/4$ and $\|F_{1,2}(X_{1,b})^*F_{3,4}(X_{2,b})-P_F(F_{1,2}(X_{1,b})^*F_{3,4}(X_{2,b}))\|_2<1/4$ for all $b\in B$. As in the prior case, we obtain

\begin{align*}
    \langle P_{F}&(F_{1,2}(X_{1,b})^*F_{3,4}(X_{2,b}))(u_{\hat{b}}\otimes 1\otimes \overline u_{\widehat{\delta_2(b)^{-1}}}\otimes 1\otimes u_{\widehat{\delta_2(b)^{-1}}}^*\otimes 1\otimes \overline u_{\hat{b}}^*\otimes 1\otimes u_{\hat{b}})\otimes 1,\\
    &(u_{\widehat{\delta_2(b)^{-1}}}^*\otimes 1\otimes \overline u_{\hat{b}}^*\otimes 1\otimes u_{\hat{b}}\otimes 1\otimes\overline u_{\widehat{\delta_2(b)^{-1}}}\otimes 1\otimes u_{\hat{b}}\otimes 1)P_{F}(F_{1,2}(X_{1})F_{3,4}(X_{2})^*)\rangle \geq 1/2.
\end{align*}

\noindent Therefore, using the prior equation and properties \eqref{orthogonalproj} and \eqref{secondproperty}, we obtain that $F_1b\cap \delta_2(b)F_1\neq\emptyset$, for every $b\in B$. Since the map $B \ni b\rightarrow \delta_2(b)\in B$ is a group homomorphism whose image has finite index in $B$, and thus is an ICC group, we can apply \cite[Lemma 7.1]{bv13} to find an element $k\in B$ with $\delta_2(b)=kbk^{-1}$, or $\delta_2(b)^{-1}=kb^{-1}k^{-1}$, for every $b\in B$. 

Hence, after replacing $\Delta$ with $\text{Ad}(1\otimes u_{k}\otimes 1\otimes 1\otimes 1)\circ\Delta$ and $\eta_b$ with $\text{Ad}(1\otimes u_{k}\otimes 1\otimes 1\otimes 1)(\eta_b)d(kb^{-1}k^{-1},k)d(k^{-1},kb^{-1})\in\cL(A^{(I)})\:\overline{\otimes}\:\cL(A^{(I)})^{\text{op}}\:\overline{\otimes}\:\cL(A^{(I)})\:\overline{\otimes}\:\mathbb{M}_n(\C)\:\overline{\otimes}\:\mathbb{M}_n(\C)^{\text{op}}$, we may assume $\delta_2=\text{Id}_B$, and 

\begin{equation*}
    \Delta(u_{\hat{b}})=\eta_b(u_{\hat{b}}\otimes \overline u_{\widehat{b^{-1}}}\otimes u_{\hat{b}}\otimes 1\otimes 1),\quad\text{for all }b\in B.
\end{equation*}\end{subproof}

\noindent To finish the proof, let $g\in G$. Let $b=\varepsilon(g)\in B$ and $a=g\widehat{b}^{-1}\in A^{(I)}$. Then

\begin{align*}
    \Delta(u_g)&=d(a,\widehat{b})^{-1}\Delta(u_a)\Delta(u_{\widehat{b}})=d(a,\widehat{b})^{-1}\Delta(u_a)\eta_b(u_{\widehat{b}}\otimes \overline u_{\widehat{b^{-1}}}\otimes u_{\widehat{b}}\otimes 1\otimes 1)\\
    &=d(a,\widehat{b})^{-1}d(a^{-1},g)^{-2}d(g^{-1},gd(b^{-1},b)^{-1}g^{-1}a)^{-1}\Delta(u_a)\eta_b(u_{a^{-1}}\otimes \overline u_{gd(b^{-1},b)^{-1}g^{-1}a}\otimes u_{a^{-1}}\otimes 1\otimes 1)\\
    &\quad\quad\quad(u_g\otimes \overline u_{g^{-1}}\otimes u_g\otimes 1\otimes 1).
\end{align*}

\noindent Thus, if we denote 

\begin{equation*}
    r_g=d(a,\widehat{b})^{-1}\Delta(u_a)\eta_bd(a^{-1},g)^2d(g^{-1},gd(b^{-1},b)^{-1}g^{-1}a)^{-1}(u_{a^{-1}}\otimes \overline u_{g d(b^{-1},b)^{-1}g^{-1}a}\otimes u_{a^{-1}}\otimes 1\otimes 1),
\end{equation*}

\noindent then $r_g\in \mathscr U(\cL(A^{(I)})\:\overline{\otimes}\:\cL(A^{(I)})^{\text{op}}\:\overline{\otimes}\:\cL(A^{(I)})\:\overline{\otimes}\:\mathbb{M}_n(\C)\:\overline{\otimes}\:\mathbb{M}_n(\C)^{\text{op}})$ and 

\begin{equation*}\label{w_g'}
    \Delta(u_g)=r_g(u_g\otimes \overline u_{g^{-1}}\otimes u_g\otimes 1\otimes 1), \quad \text{for every } g\in G.
\end{equation*}

\noindent Observe that for $g,h\in G$

\begin{equation*}
    \Delta(u_g)\Delta(u_h)=d(g,h)\Delta(u_{gh})=d(g,h)r_{gh}(u_{gh}\otimes \overline{u}_{(gh)^{-1}}\otimes u_{gh}\otimes 1\otimes 1),
\end{equation*}

\noindent while, on the other hand,

\begin{equation*}
    \Delta(u_g)\Delta(u_h)=r_g(\gamma_g\otimes\text{Id}_n\otimes\text{Id}_n^{\text{op}})(r_h)d(g,h)^2d(h^{-1},g^{-1})(u_{gh}\otimes \overline{u}_{h^{-1}g^{-1}}\otimes u_{gh}\otimes 1\otimes 1),
\end{equation*}

\noindent where $\gamma:G\curvearrowright \cL(A^{(I)})\:\overline{\otimes}\:\cL(A^{(I)})^{\text{op}}\:\overline{\otimes}\:\cL(A^{(I)})$ is the action given by $\gamma_g=\text{Ad}(u_g\otimes\overline{u}_{g^{-1}}\otimes u_g)$ built over the 2-cocycle $d\colon G\times G\to \mathbb{T}$ as in \eqref{actionwithcocycle}. In this case, and since $d(g,h)=\overline{d(h^{-1},g^{-1})}$, we obtain

\begin{equation*}
    r_{gh}=r_g(\gamma_g\otimes\text{Id}_n\otimes\text{Id}_n^{\text{op}})(r_h).
\end{equation*}

\noindent In other words, $r$ is a 1-cocycle for the action $\gamma_g\otimes\text{Id}_n\otimes\text{Id}_n^{\text{op}}$. Since the action $\gamma$ is built over $G\curvearrowright I$, by \cite[Theorem 3.6(a)]{cios22} there exists a unitary $s\in \cL(A^{(I)})\:\overline{\otimes}\:\cL(A^{(I)})^{\text{op}}\:\overline{\otimes}\:\cL(A^{(I)})\:\overline{\otimes}\:\mathbb{M}_n(\C)\:\overline{\otimes}\:\mathbb{M}_n(\C)^{\text{op}}$, a homomorphism $\xi\colon G\to\mathscr{U}_n(\C)\otimes\mathscr{U}_n(\C)^{\text{op}}$ and $\lambda_g\in\mathbb{T}$ such that $r_g=\lambda_gs^*(1\otimes 1\otimes 1\otimes\xi(g))(\gamma_g\otimes\text{Id}_n\otimes\text{Id}_n^{\text{op}})(s)$, for every $g\in G$.
Thus, after replacing $\Delta$ by $\text{Ad}(s)\circ\Delta$, we obtain  

\begin{equation}\label{xi_g}
    \Delta(u_g)=\lambda_g(u_g\otimes \overline u_{g^{-1}}\otimes u_g\otimes \xi(g)),\text{ for all }g\in G.
\end{equation}

\noindent Let $\mathscr{N}$ be the von Neumann algebra generated by $\{u_g\otimes \overline{u}_{g^{-1}}\otimes u_g\otimes x\otimes y:g\in G, x\in\mathbb{M}_n(\C), y\in\mathbb{M}_n(\C)^{\text{op}}\}$. Then, $\Delta(\cM)\subset\mathscr{N}\subset\Delta(\cM)(\mathbb{M}_n(\C)\:\overline{\otimes}\: \mathbb{M}_n(\C)^{\text{op}})$. By Lemma \ref{intertwiningpropofcom}(c), $\mathscr{N}=\Delta(\cM)$, and so $1\:\overline{\otimes}\:1\:\overline{\otimes}\:1\:\overline{\otimes}\:\mathbb{M}_n(\C)\:\overline{\otimes}\: \mathbb{M}_n(\C)^{\text{op}}\subset \Delta(\cM)$. In combination with \eqref{xi_g}, this implies $n=1$ and hence $t=1$. Also, $\xi(g)\in\mathbb{T}$ and 

\begin{equation*}
    \Delta(u_g)=\tilde{\lambda}_g(u_g\otimes \overline u_{g^{-1}}\otimes u_g),\quad\text{ for every } g\in G.
\end{equation*}

\noindent By Lemma \ref{positiveheight}, the height $h_H(G)>0$. Since $G$ is ICC, the unitary representation $(\text{Ad}(u_g))_{g\in G}$ of $G$ on $L^2(\mathcal M)\ominus\mathbb C1$ is weakly mixing, and since $H$ is ICC, $\cL_c(H)\not\prec\cL_c(C_H(k))$ for $k\neq e$. By applying \cite[Theorem 4.1]{DV24} we conclude that there exists a unitary $w\in\mathcal M$ and a group isomorphism $\rho\colon G\rightarrow H$ such that $u_g=\tilde{\lambda}_gwv_{\rho(g)}w^*$, for every $g\in G$.
\end{proof}

In light of our previous result, we conclude this section by proposing the following broader conjecture for study, which extends Popa's strengthening of Connes' Rigidity Conjecture \cite{popa07} to twisted II$_1$ factors associated with ICC property (T) groups.


\begin{conj}\label{twistedCRC} Let $G$ be an ICC property (T) group. Let $H$ be an arbitrary countable ICC group and let $d:G\times G\to\mathbb{T}$ and $c:H\times H\to\mathbb{T}$ be any 2-cocycles. Let $t>0$ and assume that $\Psi:\cL_d(G)^t\to\cL_c(H)$ is a $*$-isomorphism. 

Then $t=1$ and there exists a group isomorphism $\delta:G\to H$ for which $d$ is cohomologous to $c\circ \delta$. Moreover, there exists a map $\xi:G\to\mathbb{T}$ with $d(g,h)=\xi_g\xi_hc(\delta(g),\delta(h))\overline{\xi_{gh}}$, a multiplicative character $\eta:G\to\mathbb{T}$ and a unitary $w\in\cL_d(H)$ such that

\begin{equation*}
    \Psi(u_g)=\eta_g\xi_gwv_{\delta(g)}w^*,\text{ for all }g\in G.
\end{equation*}
\end{conj}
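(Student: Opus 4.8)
The plan is to run the triple-comultiplication deformation/rigidity argument that drives the proof of \ref{strongsuperrig}, and to isolate precisely which structural inputs have to be replaced. First I would identify $\cL_d(G)^t$ with $\cL_c(H)$, normalize $d$ so that $d(g,h)=\overline{d(h^{-1},g^{-1})}$, set $\cM=\cL_d(G)$, and build the amplified comultiplication $\Delta\colon\cM\to p\tilde{\cM}p$, with $\tilde{\cM}=\cM\:\overline{\otimes}\:\cM^{\text{op}}\:\overline{\otimes}\:\cM\:\overline{\otimes}\:\mathbb{M}_n(\C)\:\overline{\otimes}\:\mathbb{M}_n(\C)^{\text{op}}$, from the twisted triple comultiplication $\Delta_0(v_h)=\overline{c(h^{-1},h)}\,v_h\otimes\overline v_{h^{-1}}\otimes v_h$ exactly as in Section~\ref{section4}. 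The whole point is then to show that $\Delta$ can be conjugated by a unitary into diagonal form $\Delta(u_g)=\tilde{\lambda}_g(u_g\otimes\overline u_{g^{-1}}\otimes u_g)$ for some $\tilde{\lambda}\colon G\to\mathbb{T}$ (which in particular forces $n=t=1$). Granting this, the endgame is the final paragraph of the proof of \ref{strongsuperrig} word for word: Lemma~\ref{positiveheight} gives $h_H(G)>0$, the representation $(\text{Ad}(u_g))_{g\in G}$ on $L^2(\cM)\ominus\C 1$ is weakly mixing since $G$ is ICC, $\cL_c(H)\not\prec\cL_c(C_H(k))$ for $k\neq e$ since $H$ is ICC, and \cite[Theorem~4.1]{DV24} then produces the isomorphism $\delta\colon G\to H$, the multiplicative character $\eta$, the map $\xi$ with $d(g,h)=\xi_g\xi_h c(\delta(g),\delta(h))\overline{\xi_{gh}}$, and the unitary $w$.

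So all the difficulty lies in pinning down the position of $\Delta(\cM)$ inside the triple tensor product. In the wreath-like case this is carried out in two structural moves: (i) intertwine the image $\Delta(\cL(A^{(I)}))$ of the abelian base into the Cartan-type subalgebra --- using the crossed-product picture $\cM=\cL(A^{(I)})\rtimes B$, the hyperbolicity of $B$, and the relative strong-solidity-type input \cite[Theorem~3.10]{cios22}, so as to reach the situation $\mathcal{P}\subset\tilde{\mathcal{Q}}p\subset\mathcal{R}$; and then (ii) upgrade this to a rigidification of the associated orbit-equivalence relation through the cocycle-superrigidity machine \cite[Theorem~4.1]{cios22}, Popa's cocycle-untwisting \cite[Theorem~3.6(a)]{cios22} finally clearing the residual coboundary. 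For an arbitrary ICC property~(T) group $G$ none of these inputs is available: there is no distinguished abelian or amenable subalgebra to intertwine, no hyperbolic-like quotient to feed the intertwining step, and no action built over a set for the untwisting.

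Consequently, the only route I can see is to prove, for the given $G$, an abstract substitute for these facts. The general-purpose half is already in hand: part (c) of Lemma~\ref{intertwiningpropofcom} --- any right finitely generated $\Delta(\cM)$-subbimodule of $L^2(p\tilde{\cM}p)$ sits inside $L^2(\Delta(\cM))$ --- does not use the wreath-like structure and is exactly what promotes ``$\Delta(\cM)$ is almost diagonal'' to ``$\Delta(\cM)$ is diagonal''. What is missing is the input that makes $\Delta(\cM)$ almost diagonal in the first place, namely a relative form of the assertion that $\cL(G)$ has essentially no embeddings into $\cM\:\overline{\otimes}\:\cM^{\text{op}}\:\overline{\otimes}\:\cM$ beyond the obvious one. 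I expect this to be the genuine obstacle, and I do not expect it to be accessible in full generality: specializing $d$ and $c$ to be trivial and taking $H=G$, Conjecture~\ref{twistedCRC} already implies strong W$^*$-superrigidity for every ICC property~(T) group, hence Connes' Rigidity Conjecture. A realistic intermediate target is therefore to establish the conjecture for those classes of property~(T) groups for which a comultiplication-rigidity theorem is currently available --- such as the generalized wreath-like products and central extensions of this paper, or groups with a suitable hyperbolic-like feature --- by combining the relevant deformation/rigidity machinery with the fiberwise analysis of Section~\ref{section4}; in all such cases the cocycle bookkeeping above is unchanged, so the twisted statement follows from the untwisted (strong) one at essentially no extra cost.
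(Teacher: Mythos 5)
You have correctly read the situation: the statement labeled \ref{twistedCRC} is proposed as a \emph{conjecture} in the paper, not proved --- indeed it is introduced explicitly as a twisted strengthening of Connes' Rigidity Conjecture, and the paper offers no argument for it beyond the evidence of \ref{strongsuperrig} for wreath-like product groups. There is therefore no ``paper's own proof'' to compare against, and your analysis is exactly the right response: you have identified that the comultiplication-plus-height template of Section~\ref{section4} handles the cocycle bookkeeping and the endgame via Lemma~\ref{positiveheight} and \cite[Theorem~4.1]{DV24} in full generality, that the group-specific content sits entirely in forcing $\Delta(\cM)$ into diagonal position, that for wreath-like products this is supplied by the crossed-product structure together with \cite[Theorems~3.10, 4.1, 3.6(a)]{cios22}, and that no abstract substitute exists for a general ICC property~(T) group. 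Your observation that the trivial-cocycle case of the conjecture already subsumes Connes' Rigidity Conjecture is the correct explanation for why this must remain open, and your suggested intermediate targets (classes with a comultiplication-rigidity theorem, where the twisted statement should follow from the untwisted one at essentially no extra cost) are consistent with what \ref{strongsuperrig} actually achieves.
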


\section{Rigidity for property (T) central extensions}\label{section5}

\noindent This section is dedicated to proving the main results of the paper. Since we heavily rely on the results from \cite{cfqt23} regarding integral decomposition and the reconstruction of the center, we will use the same group notations from this paper. In particular, as in \cite[Definition 2.1]{cfqt23} and the remarks proceeding them, group extensions $1\ra Z(G)\ra G \overset{\e}{\ra} W\ra 1$ will be denoted by $G=Z(G)\rtimes_c W$ where $c\colon  W\times W\ra Z(G)$ is the $2$-cocycle satisfying $\sigma(g)\sigma(h)=c(g,h)\sigma(gh)$ for all $g,h\in W$ for $\sigma\colon  W\ra G$ a section with $\e\circ\sigma=id_W$. We encourage the interested reader to consult these notations and the results from \cite{cfqt23} beforehand.

We start by demonstrating  the following more precise statement from which Theorems A and B and \ref{TheoremC} will be derived.

\begin{thm}\label{mainresult} Let $C$ be a nontrivial free abelian group and let $D$ be a nontrivial ICC subgroup of a hyperbolic group with an action $D\curvearrowright I$ with amenable stabilizers. Let $W\in\mathcal{WR}(C,D\curvearrowright I)$ be any property (T) group. Assume that $G= A\rtimes_c W$ is any property (T) central extension with infinite center $A=Z(G)$ that splits over $C^{(I)}\leqslant W$. Let $H$ be any group and let $\Theta\colon  \cL(G)\ra \cL(H)$ be any $*$-isomorphism.

Then, $H\cong_{v}G$. More precisely, we can find a countable family of projections $\mathcal{P}\subset \cL(Z(H))$ with $\sum_{p\in\mathcal{P}}p=1$, finite subgroups $A_p<A$, $B_p<Z(H)$, a finite index subgroup $G_0\leqslant G$ containing $W$, $[G,G]$ and $A_p$ for all $p$, maps $\delta_p\colon G_0\to H$, for each $p\in\mathcal{P}$, a finite abelian group $\mathscr{V}$, a group homomorphism $r\colon W/[W,W]\to\mathscr{V}$, a character $\eta\colon W\to\mathbb{T}$, and a unitary $w\in\mathscr{U}(\cL(H))$ satisfying the following relations:

\begin{enumerate}
    \item The map $\Theta$ is given by 
    
    \begin{equation*}
        \Theta(u_g )=\eta_{\pi_1(g)}r_{\pi_2(g)}w\left(\sum_{p\in\mathcal{P}}    v_{\delta_p(g)}p\right)w^*, \quad \text{for all }  g\in G_0.
    \end{equation*}

    \noindent Here, $\pi_1\colon G_0\to W$ and $\pi_2\colon G_0\to W/[W,W]$ are the canonical quotient maps.

    \item For all $p \in \mathcal P$, $\delta_p (A_p)\leqslant B_p $ and $\text{Im}(\delta_p)$ is a finite index subgroup of $H$.
    \item For every $p$ there exists a subgroup  $[G,G] A_p \leqslant G_1\leqslant G_0$  such that the canonical map $\widehat\delta_p\colon  G_1/A_p \ra \text{Im}(\delta_p)B_p/B_p$ given by $\widehat \delta_p(g A_p)= \delta_p(g)B_p$ is a group isomorphism.
\end{enumerate}
\end{thm}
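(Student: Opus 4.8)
The plan is to run the two-step program announced in the introduction: recycle the center-reconstruction technology of \cite{cfqt23} to put $\cL(G)$ and $\cL(H)$ into compatible direct-integral form with twisted-group-factor fibers, then apply Theorem \ref{strongsuperrig} fiberwise and glue. Since $W$ is ICC — under our hypotheses $D$ is non-amenable with amenable point stabilizers, so all orbits of $D\curvearrowright I$ are infinite and $W$ is ICC by standard facts on wreath-like products (cf.\ \cite[Lemma 4.11]{cios22}) — we have $\mathscr{Z}(\cL(G))=\cL(Z(G))=\cL(A)$, and $\Theta$ carries this onto $\mathscr{Z}(\cL(H))$, which is therefore diffuse; in particular $H$ has infinite FC-center. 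Invoking the integral-decomposition and FC-center analysis of \cite{cfqt23}, one extracts a finite normal subgroup of $H$ — the source of the finite groups $B_p$ — so that, after passing to finite index as well, $H$ becomes a central extension $1\to Z'\to H'\to W'\to 1$ with $W'$ ICC, together with a measurable field of $2$-cocycles $(c'_y)_y$ and an identification
\[
\cL(H)\;\cong\;\int_{Y}^{\oplus}\big(\cL_{c'_y}(W')\big)^{t_y}\,d\nu(y)
\]
under which $\Theta(\cL(A))$ becomes the diagonal $L^{\infty}(Y)$, the amplifications $t_y$ accounting for the finite discrepancy between $H$ and $H'$. On the other side, \cite[Proposition 6.6]{cfqt23} gives $\cL(G)=\int_{\widehat{A}}^{\oplus}\cL_{c_x}(W)\,d\mu(x)$, where $c_x=x\circ c$ is the fiber at $x\in\widehat{A}$ of the $A$-valued $2$-cocycle $c$ defining $G=A\rtimes_{c}W$.

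Identifying $(\widehat{A},\mu)$ with $(Y,\nu)$ via $\Theta|_{\cL(A)}$, we disintegrate $\Theta=\int\Theta_x\,d\mu(x)$ so that each $\Theta_x$ realizes a $*$-isomorphism $\cL_{c_x}(W)^{s_x}\cong\cL_{c'_x}(W')$ for some $s_x>0$, for a.e.\ $x$. Since $W\in\mathcal{WR}(C,D\curvearrowright I)$ with $C$ a nontrivial free abelian group, $D$ a nontrivial ICC subgroup of a hyperbolic group acting with amenable stabilizers, and $W$ has property (T), while $W'$ is ICC, Theorem \ref{strongsuperrig} applies to each $\Theta_x$: it gives $s_x=1$, a group isomorphism $\delta_x\colon W\to W'$ with $c_x$ cohomologous to $c'_x\circ\delta_x$, a map $\xi^{(x)}\colon W\to\mathbb{T}$ with $c_x(g,h)=\xi^{(x)}_g\xi^{(x)}_h\overline{\xi^{(x)}_{gh}}\,c'_x(\delta_x(g),\delta_x(h))$, a character $\eta^{(x)}\colon W\to\mathbb{T}$, and a unitary $w_x\in\cL_{c'_x}(W')$ with $\Theta_x(u_g)=\eta^{(x)}_g\,\xi^{(x)}_g\,w_x v_{\delta_x(g)}w_x^{*}$ for all $g\in W$. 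In particular $s_x=1$ a.e., so the finite-kernel discrepancy has no effect at the von Neumann algebraic level and the $B_p$ may be taken inside $Z(H)$.

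For the gluing, $\mathrm{Isom}(W,W')$ is countable, so $x\mapsto\delta_x$ has countable range and $\widehat{A}$ partitions into Borel sets $(X_p)_{p\in\mathcal{P}}$ with $\delta_x\equiv\delta_p$ on $X_p$; the projections $p:=\mathbf{1}_{X_p}$, transported through $\Theta$, lie in $\mathscr{Z}(\cL(H))=\cL(Z(H))$ (again using the center reconstruction), and the field $(w_x)_x$ assembles into a unitary $w\in\mathscr{U}(\cL(H))$. It remains to convert the fiberwise equalities into one group-level formula. Since $c_x$ and $c'_x\circ\delta_p$ are the values at $x$ of the group-valued cocycles $c$ and $c'\circ\delta_p$ (the latter transporting the cocycle $c'$ of $H'$ to $W$ through $\delta_p$), and $\widehat{A}$ separates the points of the free abelian group $A$, the cohomology relation above — valid throughout the positive-measure set $X_p$ — forces $c$ and $c'\circ\delta_p$ to become cohomologous into a common abelian group once finite subgroups $A_p<A$ and $B_p<Z(H)$ are factored out; the trivializer $\xi^{(x)}$ may then be taken of the form $x\circ\zeta_p$ for a fixed group cochain $\zeta_p$, up to a residual homomorphism $W/[W,W]\to\mathbb{T}$ of finite order. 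Bundling these residues with the measurable family $\eta^{(x)}$ (all trivial on $[W,W]$) produces the global character $\eta\colon W\to\mathbb{T}$ and the finite homomorphism $r\colon W/[W,W]\to\mathscr{V}$. The relation $c\sim c'\circ\delta_p$ modulo $A_p,B_p$ then promotes $\delta_p$ to a group homomorphism on the subgroup $G_0\le G$ generated by a section of $W$ and the $A_p$; Lemma \ref{finiteindex[W,W]}, applied over a finite-index subgroup of $W$ on which the relevant coboundary is defined, guarantees $G_0$ has finite index in $G$ (and, in the notation of the statement, contains $W$ and each $A_p$), and the same mechanism produces the subgroup $[G,G]A_p\le G_1\le G_0$ and the isomorphism $\widehat{\delta}_p\colon G_1/A_p\to\mathrm{Im}(\delta_p)B_p/B_p$. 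Finally, decomposing $u_g=u_a\,u_{\sigma(\pi_1(g))}$ for $g\in G_0$ with $a=g\,\sigma(\pi_1(g))^{-1}\in A$ and applying $\Theta$ factor by factor — using that $\Theta$ is diagonal on $\cL(A)=L^{\infty}(Y)$ and that the fiber of $u_{\sigma(w)}$ is the canonical twisted unitary $v_w\in\cL_{c_x}(W)$, to which the fiberwise formula applies — one obtains conclusion (1); conclusions (2) and (3) are then immediate from the construction of the $\delta_p$.

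We expect the genuine difficulty to be concentrated in this last gluing stage: the two heavy analytic inputs — the center reconstruction of \cite{cfqt23} and the twisted-factor rigidity Theorem \ref{strongsuperrig} — are used as black boxes, so the remaining work is the measure-theoretic coherence of the fiberwise isomorphisms and, above all, the precise bookkeeping of the finite subgroups $A_p,B_p$ and the finite-index subgroup $G_0$ needed to descend the ICC-level isomorphism $W\cong W'$ to a virtual isomorphism between $G$ and $H$. One further point to verify — though it should come essentially for free once the reconstruction is in place — is that the fibers of $\cL(H)$ are twisted group factors of a \emph{single} ICC group $W'$, rather than of a measurably varying family of pairwise non-isomorphic groups, so that Theorem \ref{strongsuperrig} can be applied uniformly across the disintegration.
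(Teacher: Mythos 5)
Your proposal follows the same two-step program that the paper implements: (i) use the center-reconstruction machinery of \cite{cfqt23} to show $H$ is (essentially) a central extension and to put $\cL(G)$ and $\cL(H)$ into compatible direct-integral form with twisted group factor fibers, and (ii) apply \ref{strongsuperrig} fiberwise, then use countability of group isomorphisms (from property (T)/finite generation) plus measurable selection to assemble the field of fiberwise conjugacies into a single unitary $w$, and finally do cohomology bookkeeping in the quotient $\mathscr{U}(\cL(Z(H)))/\Theta(A)\cdot(\sum_p Z(H)p)$ to extract the homomorphism $r$, the character $\eta$, and the virtual identification $\widehat\delta_p\colon G_1/A_p\to \operatorname{Im}(\delta_p)B_p/B_p$. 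This matches the paper.

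One notable imprecision: you attribute the finite subgroups $B_p$ to a finite normal subgroup of $H$ extracted in the FC-center analysis. In the paper this is not how they arise. The FC-center analysis does produce a finite normal subgroup $F=[H^{fc},H^{fc}]$, but Claim~\ref{finiteconjiscenter} (using \ref{strongsuperrig} on each fiber to force $n_x=l_x=1$ and $\mathcal A_x=\mathbb C$) proves $H^{hfc}=Z(H)$ and $F=\{1\}$, so $H$ is \emph{exactly} a central extension, with no residual finite normal subgroup and no genuine amplification $t_y$ in the integral decomposition. The finite groups $A_p<A$ and $B_p<Z(H)$ instead appear at the very end, as $A_p=\{a\in A':u_a f_p = f_p\}$ and $B_p=\{h\in Z(H):v_h e_p = e_p\}$, where $e_p$ are the projections in $\cL(Z(H))$ coming from the countable partition and $f_p=\Theta^{-1}(e_p)$; i.e., they measure the failure of the atomic projections $e_p$ to separate the points of $Z(H)$ (and $A$), not the failure of $H$ to be a central extension. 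This ordering matters: you must first prove $F=\{1\}$ (otherwise the fibers carry an extra matrix amplification that obstructs a direct application of \ref{strongsuperrig}), and only afterwards read off $A_p,B_p$ from the projections. The rest of your sketch, including the use of Lemma~\ref{finiteindex[W,W]} to force finite index and the coherence of the fibers being over a fixed group, correctly anticipates the paper's argument.
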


Before presenting the proof, we introduce some notation and preliminary results. Let $H$ be a countable discrete group. The \emph{FC-center} of $H$, denoted by $H^{fc}$, consists of all $g\in H$ whose conjugacy class $\mathscr O(g)=\{ hgh^{-1}\,:\, h\in H\}$ is finite. The upper FC-series of $H$ is the series $\{1\}=H^h_0\leqslant H^h_1\leqslant \cdots \leqslant H^h_n\leqslant\cdots$, where $H^h_{n+1}/H^h_n$ is the set of all FC-elements of $H/H^h_n$. Note that $H_1^h=H^{fc}$. This series stabilizes at some ordinal called the \textit{hyper-FC center} of $H$ which is denoted by $H^{hfc}$. By \cite[Proposition 2.2]{hfc}, $H/H^{hfc}$ is ICC.

To prove the previous theorem, we need an analogue of \cite[Theorem B]{cfqt23} for non-split central extensions. Since the proof is the same as the original, we present the statement here and recommend that interested readers consult the original result beforehand. 

\begin{thm}\label{almostcenter} Let $W\in \mathcal{WR}(C,D\curvearrowright I)$ be a property (T) group where $C$ is a non-trivial free abelian, $D$ is a non-trivial ICC subgroup of a hyperbolic group, and $D\curvearrowright I$ has amenable stabilizers. Let $G=A\rtimes_c W$ be any property (T) central extension with infinite center $A=Z(G)$ that splits over $C^{(I)}\leqslant W$. Assume that $H$ is an arbitrary group such that $\cL(G)\cong \cL(H)$.

Then $H^{fc}\leqslant H^{hfc}$ is finite index and the commutator $F:=[H^{fc},H^{fc}]$ is finite. Moreover, if we consider the central projection $z= |F|^{-1}\sum_{g\in F}  u_g \in \mathscr Z (\cL(H))$ then one can find a countable family of orthogonal projections $r_n \in \cL(H^{fc})z$ such that $\cL(H^{fc})z = \oplus_n \mathscr Z (\cL(H))r_n$.
\end{thm}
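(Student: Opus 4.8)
The plan is to run the proof of \cite[Theorem B]{cfqt23} essentially verbatim; the only genuinely new feature here — the $2$-cocycle $c$ in the central extension $G=A\rtimes_c W$, as opposed to a direct product — is invisible to the argument, since every ingredient used below depends only on intrinsic features of $\cL(G)$: namely that $\cL(G)$ has property (T) (because $G$ does, by \cite{connesjones}) and that its center $\mathscr Z(\cL(G))\supseteq\cL(Z(G))=\cL(A)$ is diffuse (because $A$ is infinite). Moreover $W=G/Z(G)$ is ICC — the stabilizers of $D\curvearrowright I$ are amenable while $D$, being an ICC subgroup of a hyperbolic group, is non-amenable (amenable subgroups of hyperbolic groups are virtually cyclic, hence not ICC), so all orbits are infinite and $W$ is ICC by \cite[Lemma 4.11(b)]{cios22} — whence $\mathscr Z(\cL(G))=\cL(A)$ exactly; but only the diffuseness of the center and property (T) are needed.

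First I would transport these facts across $\Theta$: $\cL(H)\cong\cL(G)$ has property (T) and $\mathscr Z(\cL(H))$ is diffuse. Since the center of any group von Neumann algebra is the fixed-point algebra of the conjugation action — equivalently the $\|\cdot\|_2$-closed linear span of the class sums over the \emph{finite} conjugacy classes of $H$ — we get $\mathscr Z(\cL(H))\subseteq\cL(H^{fc})$, and diffuseness forces $H^{fc}$ to be infinite. This also identifies $\mathscr Z(\cL(H))$ with the conjugation-fixed subalgebra of $\cL(H^{fc})$.

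The core of the argument, and the step I expect to be the main obstacle, is establishing the two finiteness statements: $[H^{hfc}:H^{fc}]<\infty$ and $F:=[H^{fc},H^{fc}]$ finite. These are proved exactly as in \cite[Theorem B]{cfqt23}. Property (T) of $\cL(H)$ is used to produce a uniform bound on the sizes of the (finite) $H$-conjugacy classes of elements of $H^{fc}$; B.H. Neumann's theorem on groups with boundedly finite conjugacy classes then yields $|F|<\infty$. The finiteness of $[H^{hfc}:H^{fc}]$ is obtained by the same rigidity together with \cite[Proposition 2.2]{hfc} (so that $H/H^{hfc}$ is ICC) and an induction along the upper FC-series. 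All of these inputs refer solely to property (T) of $\cL(H)$ and the diffuseness of its center, so the presence of the cocycle $c$ changes nothing and the proof may be quoted unchanged.

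It remains to produce the decomposition. Since $F$ is finite and normal in $H$, the averaging element $z:=|F|^{-1}\sum_{g\in F}u_g$ is a projection lying in $\mathscr Z(\cL(H))$ (it is central because it belongs to $\cL(F)\subseteq\cL(H^{fc})$ and is fixed by every $\mathrm{Ad}(u_h)$), and $\cL(H^{fc})z\cong\cL(H^{fc}/F)$ is abelian because $H^{fc}/F$ is abelian; moreover $\mathscr Z(\cL(H))z=\mathscr Z(\cL(H)z)$ is the center of the corner $\cL(H)z$. The remaining point is that $\cL(H^{fc})z$ is atomic over $\mathscr Z(\cL(H))$, i.e. $\cL(H^{fc})z=\bigoplus_n\mathscr Z(\cL(H))r_n$ for orthogonal projections $r_n\in\cL(H^{fc})z$ with $\sum_n r_n=1$; this is where property (T) of $\cL(H)$ (equivalently, of its corner $\cL(H)z$) enters decisively — it rules out the ``profinite fibre'' behaviour that an abelian FC normal subgroup could otherwise exhibit — and it is carried out via the direct-integral techniques of \cite[Section 6]{cfqt23}, again with no dependence on $c$.
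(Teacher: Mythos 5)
Your proposal is correct and takes exactly the paper's approach: the paper itself observes that this theorem is proved by running the argument of \cite[Theorem B]{cfqt23} verbatim, and your analysis correctly pinpoints why the cocycle $c$ is invisible — the whole argument feeds only on the isomorphism invariants ``$\cL(H)\cong\cL(G)$ has property (T)'' and ``$\mathscr Z(\cL(H))$ is diffuse,'' both of which are unaffected by whether the central extension splits.

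One small side remark: the ICC claim about $W$ and the exact identification $\mathscr Z(\cL(G))=\cL(A)$, while true for the reasons you give, are not needed for the present statement (as you note), and including them risks obscuring the genuinely load-bearing observation, which is that the hypotheses on $W$, $C$, $D$ in the statement are simply carried along for later use in the proof of Theorem~\ref{mainresult} rather than being used here.
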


We now proceed to prove the main result of this paper, closely following the approach taken in \cite[Theorem A]{cfqt23}.

\begin{proof}[Proof of Theorem \ref{mainresult}] Let $H$ be a countable group and assume $\Theta(\cL(A\rtimes_cW))=\cL(H)=:\mathcal M$, where $A$ is infinite abelian and $W\in\mathcal{WR}(C,D\curvearrowright I)$ is a wreath-like product group. Notice that $H$ can be written as an extension of $H^{hfc}$ by $H/H^{hfc}$. To prove our result, we must first establish that $H/H^{hfc}$ is isomorphic to $W$, $Z(H)=H^{hfc}=H^{fc}$ and $Z(H)$ is virtually isomorphic to $A$. Denote by $F=[H^{fc},H^{fc}]$, and by $(X,\mu)$ (resp. $(Y,\nu)$) the probability spaces with $\mathscr{Z}(\cL(H))=L^{\infty}(X,\mu)$ (resp. $\cL(A)=L^{\infty}(Y,\nu)$). 

\vspace{2mm}

\begin{claim}\label{quotientiswreathlikeprod} $H/H^{hfc}\cong W$ and $H^{hfc}/F=Z(H/F)$.
\end{claim}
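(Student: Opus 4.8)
The plan is to transport through $\Theta$ the direct integral decomposition of $\cL(G)$ over its center onto the $H$-side, and then to run the fiberwise rigidity of \ref{strongsuperrig}.

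First, observe that $D$, being an ICC subgroup of a hyperbolic group, is neither finite nor virtually cyclic, hence non-amenable; since all point stabilizers of $D\curvearrowright I$ are amenable, none of them is of finite index, so every $D$-orbit on $I$ is infinite, and by \cite[Lemma 4.11(b)]{cios22} the wreath-like product $W$ is ICC. Consequently $\cL_c(W)$ is a factor for every scalar $2$-cocycle $c$, and since $\cL(A)\subseteq\mathscr Z(\cL(G))$ we obtain $\mathscr Z(\cL(G))=\cL(A)=L^\infty(Y,\nu)$, so that the disintegration recalled in Section \ref{section3} gives
\[
\cL(G)=\int_Y^{\oplus}\cL_{c_y}(W)\,d\nu(y),
\]
where $c_y\colon W\times W\to\mathbb T$ is the measurable field of scalar $2$-cocycles induced by the group $2$-cocycle $c$ of the central extension $G=A\rtimes_c W$. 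Through $\Theta$ I identify $\mathscr Z(\cL(H))$ with $L^\infty(Y,\nu)$. By Theorem \ref{almostcenter}, $H^{fc}\leqslant H^{hfc}$ has finite index, $F=[H^{fc},H^{fc}]$ is finite, and for the central projection $z=|F|^{-1}\sum_{g\in F}u_g$ one has $\cL(H^{fc})z=\bigoplus_n\mathscr Z(\cL(H))r_n$; in particular $\cL(H^{fc})z$ is abelian, so $H^{fc}/F$ is abelian. Let $Y_0\subseteq Y$ be the measurable set with $\Theta^{-1}(z)=\mathbbm 1_{Y_0}$; then $\nu(Y_0)>0$, $\cL(H)z\cong\cL(H/F)$, and $H^{hfc}/F\lhd H/F$ with quotient $H/H^{hfc}$, which is ICC by \cite[Proposition 2.2]{hfc}. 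Since $\cL(H^{fc})z$ is a direct sum of corners of $\mathscr Z(\cL(H))z$ and $\cL(H^{hfc})z=\cL(H^{hfc}/F)$ is a finite-index extension of it, the integral-decomposition analysis of \cite[Section 8]{cfqt23} yields
\[
\cL(H)z=\cL(H/F)=\int_{Y_0}^{\oplus}\cL_{d_y}\big(H/H^{hfc}\big)\,d\nu(y)
\]
for a measurable field of scalar $2$-cocycles $d_y$ on $H/H^{hfc}$, with factorial fibers.

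Next, on the corners over $Y_0$ the isomorphism $\Theta$ carries $\cL(A)$ onto $\mathscr Z(\cL(H))z$; after identifying both base algebras with $L^\infty(Y_0,\nu)$, $\Theta$ disintegrates as $\int_{Y_0}^{\oplus}\Theta_y\,d\nu(y)$, where $\Theta_y\colon\cL_{c_y}(W)\to\cL_{d_y}(H/H^{hfc})$ is a $*$-isomorphism for a.e.\ $y$ (cf.\ \cite[Section IV.8]{takesaki}). For such $y$ the hypotheses of \ref{strongsuperrig} are satisfied --- $W\in\mathcal{WR}(C,D\curvearrowright I)$ is a property (T) group of the required shape and $H/H^{hfc}$ is a countable ICC group --- so the amplification parameter is $1$ and there is a group isomorphism $W\to H/H^{hfc}$. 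Hence $H/H^{hfc}\cong W$, the first assertion.

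To recover the center, note first that if $gF\in Z(H/F)$ then the $H$-conjugacy class of $g$ is contained in the finite set $gF$, so $g\in H^{fc}\leqslant H^{hfc}$, giving $Z(H/F)\subseteq H^{fc}/F\subseteq H^{hfc}/F$. For the reverse inclusion: since $H/H^{hfc}\cong W$ is ICC and, by Theorem \ref{almostcenter}, $\cL(H^{fc})z$ is a direct sum of corners of the center, an argument parallel to the proof of \cite[Theorem A]{cfqt23} shows that $H^{hfc}=H^{fc}$ and that the conjugation action of $H$ on the abelian group $H^{fc}/F$ is trivial, i.e.\ $[H,H^{fc}]\subseteq F$; equivalently, $H^{hfc}/F=H^{fc}/F$ is central in $H/F$. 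Together with the first inclusion this gives $H^{hfc}/F=Z(H/F)$.

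The hard part will be the second displayed decomposition: checking that, after transport by $\Theta$, the fibers of the central disintegration of $\cL(H)z\cong\cL(H/F)$ are genuinely twisted group factors of $H/H^{hfc}$. This is exactly where the structural conclusion of Theorem \ref{almostcenter} --- that $\cL(H^{hfc})$ is, modulo the finite piece $F$, only a finite-index extension of a direct sum of corners of $\mathscr Z(\cL(H))$ --- must be exploited, so that the part of $\cL(H^{hfc})$ beyond $\mathscr Z(\cL(H))$ does not survive the disintegration. This step, together with the subsequent fiberwise disintegration of $\Theta$, is the technically heaviest input and is handled by recycling the integral-decomposition machinery of \cite[Sections 6 and 8]{cfqt23}.
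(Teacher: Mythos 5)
You are aware that "the second displayed decomposition is the hard part," but as stated it is in fact circular, and this is a genuine gap. Once you identify $\mathscr Z(\cL(H))z$ with $L^\infty(Y_0)$ and disintegrate over the center, the $H$-side fibers are a priori of the form $\mathcal D_x\rtimes_{\beta_x,\tilde d_x} H/H^{hfc}$, where $\mathcal D_x$ is the fiber of $\cL(H^{hfc})z$ over $\mathscr Z(\cL(H/F))$. Theorem~\ref{almostcenter} (together with \cite[Lemma~6.11, Theorem~6.12]{cfqt23}) only tells you that $\mathcal D_x$ is completely atomic / finite-dimensional; it does \emph{not} tell you $\mathcal D_x=\C$. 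But your assertion that the fibers are $\cL_{d_y}(H/H^{hfc})$ is precisely the statement $\mathcal D_x=\C$, which is logically equivalent to $\cL(H^{hfc}/F)=\mathscr Z(\cL(H/F))$, i.e.\ to the second half of the claim you are trying to prove. You then go on to prove that second half "separately," so the argument as written uses the conclusion to justify the decomposition, and then re-derives the conclusion; it does not close.

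The paper's proof avoids this circle by keeping $\mathcal D_x$ general. After matching fibers via \cite[Theorem~2.1.14]{spaas}, the $G$-side fiber $\cL_{\tilde c p_{f^{-1}(x)}}(W)$ is a II$_1$ factor (since $W$ is ICC), so \cite[Lemma~2.5]{cfqt23} forces $\mathcal D_x$ to be finite-dimensional; \cite[Proposition~2.6]{cfqt23} then rewrites $\mathcal D_x\rtimes_{\beta_x,\tilde d_x}H/H^{hfc}$ as an amplification $\cL_{\eta_x}(H_x)^{n_xl_x}$ for some finite-index subgroup $H_x\le H/H^{hfc}$. It is only at this point that \ref{strongsuperrig} is invoked, and it does two things at once: it kills the amplification ($n_xl_x=1$) and it identifies $H_x=H/H^{hfc}\cong W$. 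The consequence $\mathcal D_x=\C$ is then read off, giving $\cL(H^{hfc}/F)=\mathscr Z(\cL(H/F))$ and hence $H^{hfc}/F=Z(H/F)$ simultaneously with the first assertion, not after it. Your concluding paragraph about $[H,H^{fc}]\subseteq F$ is a plausible alternative route for the reverse inclusion, but as written it is only a sketch of "an argument parallel to the proof of \cite[Theorem~A]{cfqt23}"; to make the proposal sound you would need to either supply that argument or, more efficiently, adopt the paper's line: work with the honest fibers $\mathcal D_x\rtimes H/H^{hfc}$, reduce to $\cL_{\eta_x}(H_x)^{n_xl_x}$ via \cite[Proposition~2.6]{cfqt23}, and let \ref{strongsuperrig} collapse both the amplification and the subgroup in one stroke.
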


\begin{subproof}[Proof of Claim \ref{quotientiswreathlikeprod}] From Theorem \ref{almostcenter}, the commutator $F=[H^{fc},H^{fc}]$ is finite. Furthermore, observe that $H/F=H^{hfc}/F\rtimes_{\alpha,d}H/H^{hfc}$ is a group extension. Letting $z=|F|^{-1}\sum_{f\in F}u_f\in\mathscr{P}(\mathscr{Z}(\cL(H)))$ then \cite[Proposition 6.15]{cfqt23} yields the following cocycle cross-product decomposition $\cL(H)z\cong\cL(H^{hfc})z\rtimes_{\beta,\tilde{d}}H/H^{hfc}$. Here, $(\beta,\tilde d)$ is the von Neumann algebra cocycle action induced from the group cocycle action $(\alpha,d):H/H^{hfc}\curvearrowright H^{hfc}/F$, where $\tilde d\colon H/H^{hfc}\times H/H^{hfc}\to \mathscr U (\cL(H^{hfc}/F))$ is the $2$-cocycle induced by the group cocycle $d$, i.e.\ $\tilde d(g,h)=v_{d(g,h)}$ for all $g,h \in H/H^{hfc}$. In addition, we have $\cL(H^{hfc})z\cong\cL(H^{hfc}/F)$.

From Theorem \ref{almostcenter} one can find orthogonal projections $s_i\in \cL(H^{fc})z$ with $\sum_i s_i=z$ such that $\cL(H^{fc}/F)=\oplus_{i=1}^{\infty}\mathscr{Z}(\cL(H/F))s_i$. By \cite[Lemma 6.11]{cfqt23} the inclusion of abelian von  Neumann algebras $\mathscr Z(\cL(H))z \subseteq \cL(H^{hfc})z$ yields an integral decomposition $\cL(H^{hfc})z= \int^\oplus_{X_0} \mathcal D_x d\mu_0(x)$ where $\mathcal D_x$ are completely atomic for $\mu_0$-a.e.\ $x\in X_0$. Here, $X_0$ is a measurable space with $z=\mathbbm{1}_{X_0}$. Using \cite[Propositions 6.6 and 6.8]{cfqt23}, we further get

\begin{equation*}
    \mathcal M z=\cL(H)z\cong\int_{X_0}^{\oplus}\mathcal{D}_x\rtimes_{\beta_x,\tilde{d}_x}H/H^{hfc}\:d\mu_0(x),
\end{equation*}

\noindent Let $z= \Theta(p)\in \mathscr Z(\mathcal M)$ and note that $\cL(H)z= \Theta(\cL(A)p\rtimes_{\tilde{c} p}W)$, where $\tilde{c}p$ maps into the corner $\cL(A)p$; that is, $\tilde{c}p(g,h)=\tilde{c}(g,h)p$ for every $g,h\in W$. Then,

\begin{equation}\label{twistedfibersisom}
    \int_{Y_0}^{\oplus}\cL_{\tilde{c}p_y}(W)\:d\nu_0(y)=\cL(A)p\rtimes_{\tilde{c}p} W\cong\int_{X_0}^{\oplus}\mathcal{D}_x\rtimes_{\beta_x,\tilde{d}_x}H/H^{hfc}\:d\mu_0(x).
\end{equation}

\noindent By \cite[Theorem 2.1.14]{spaas}, the fibers are isomorphic; that is, there exists a Borel isomorphism $f\colon Y_0\to X_0$ with $\cL_{\tilde{c}p_{f^{-1}(x)}}(W)\cong\mathcal{D}_x\rtimes_{\beta_x,\tilde{d}_x}H/H^{hfc}$ for $\mu_0$-a.e.\ $x\in X_0$. In particular, $\mathcal{D}_x\rtimes_{\beta_x,\tilde{d}_x}H/H^{hfc}$ is a II$_1$ factor, and hence, by \cite[Lemma 2.5]{cfqt23}, $\mathcal{D}_x$ is finite dimensional. From \cite[Proposition 2.6]{cfqt23}, we find a subgroup $H_x\leqslant H/H^{hfc}$ of index $n_x\in\mathbb{N}$, $l_x\in\mathbb{N}$ and a 2-cocycle $\eta_x\colon H_x\times H_x\to\mathbb{T}$ satisfying $\mathcal{D}_x=\mathbb{M}_{l_x}(\mathbb C)\:\otimes\:\mathbb{D}_{n_x}$ and $\mathcal{D}_x\rtimes_{\beta,\tilde{d}_x}H/H^{hfc}\cong\cL_{\eta_x}(H_x)^{n_xl_x}$. 

By \eqref{twistedfibersisom}, $\cL_{\tilde{c}p_{f^{-1}(x)}}(W)\cong\cL_{\eta_x}(H_x)^{n_xl_x}$ and \ref{strongsuperrig} implies $n_xl_x=1$ and $W\cong H_x=H/H^{hfc}$. Moreover, since $n_x,l_x=1$, $\mathcal{D}_x=\C$ and $\cL(H^{hfc}/F)=\mathscr{Z}(\cL(H/F))$, we have $H^{hfc}/F=Z(H/F)$.
\end{subproof}

\begin{claim}\label{finiteconjiscenter} $H^{hfc}=Z(H)$.
\end{claim}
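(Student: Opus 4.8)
The plan is to reduce the assertion to showing that the finite normal subgroup $F=[H^{fc},H^{fc}]$ is trivial. Since every central element has a one-point conjugacy class, $Z(H)\leqslant H^{fc}\leqslant H^{hfc}$ always holds; and Claim \ref{quotientiswreathlikeprod} gives $H^{hfc}/F=Z(H/F)$, which forces $F=[H^{hfc},H^{hfc}]$ and hence the equivalences $H^{hfc}=Z(H)\iff H^{hfc}\text{ is abelian}\iff F=\{1\}$. Throughout I would keep in mind the identification $\mathcal M=\cL(H)$, $\mathscr{Z}(\mathcal M)=\Theta(\cL(A))$ with $A$ free abelian, the direct integral $\cL(G)=\cL(A)\rtimes_cW=\int_Y^\oplus\cL_{c_y}(W)\,d\nu(y)$ of $\cL(G)$ over its center $\cL(A)=L^{\infty}(Y,\nu)$, and the fact that $W$ is ICC with property (T).

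First I would exploit the cutting coming from $\cL(F)\subset\mathcal M$. As $F$ is a finite normal subgroup, $z=|F|^{-1}\sum_{f\in F}u_f$ is a central projection of $\mathcal M$; writing $\cL(F)=\bigoplus_{\rho\in\widehat F}\cL(F)z_\rho$ with $\cL(F)z_\rho\cong\mathbb M_{d_\rho}(\C)$ and grouping the $z_\rho$ into $H$-conjugation orbits $O$, the projections $z_O=\sum_{\rho\in O}z_\rho$ belong to $\mathscr{Z}(\mathcal M)$, sum to $1$, and one obtains $\mathcal Mz_O\cong\mathbb M_{|O|d_\rho}(\C)\:\overline{\otimes}\:\cL_{\omega_O}(H_O/F)$, where $H_O\leqslant H$ is the finite index stabilizer of a fixed $\rho\in O$ and $\omega_O$ the $2$-cocycle on $H_O/F$ arising from the projective action of $H_O$ on $\mathbb C^{d_\rho}$ together with the extension data (with $z_{\{\mathrm{triv}\}}=z$ and $\mathcal Mz=\cL(H/F)$). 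On the other hand, since $z_O=\Theta(q_O)$ with $q_O\in\mathscr{Z}(\cL(G))=\cL(A)$, the algebra $\mathcal Mz_O$ equals $\Theta(\cL(A)q_O\rtimes_cW)=\int^\oplus\cL_{c_y}(W)\,d\nu(y)$, a direct integral over $\cL(A)q_O$ of twisted group factors of $W$, which are property (T) II$_1$ factors. By Claim \ref{quotientiswreathlikeprod}, $H/F$ is a central extension of $W$ with abelian kernel $H^{hfc}/F$; hence the finite index subgroup $H_O/F$ is again a central extension, with ICC quotient $W_O$ (the image of $H_O$ in the ICC group $W$, a finite index subgroup, hence ICC), so decomposing $\cL_{\omega_O}(H_O/F)$ over its center produces a direct integral of twisted factors $\cL_{\tau_t}(W_O)$. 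Matching the two direct integral decompositions of $\mathcal Mz_O$ over its center, via the fiberwise isomorphism of \cite[Theorem 2.1.14]{spaas}, yields for a.e.\ fiber $\cL_{c_y}(W)\cong\mathbb M_{|O|d_\rho}(\C)\:\overline{\otimes}\:\cL_{\tau_t}(W_O)$, i.e.\ $\cL_{c_y}(W)^{1/(|O|d_\rho)}\cong\cL_{\tau_t}(W_O)$. Since $W\in\mathcal{WR}(C,D\curvearrowright I)$ has property (T) and $W_O$ is ICC, Theorem \ref{strongsuperrig} forces $|O|d_\rho=1$; thus $|O|=1$ and $d_\rho=1$ for every $\rho\in\widehat F$. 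Therefore $F$ is abelian and $H$ fixes every character of $F$, hence acts trivially on $F$, so $F\leqslant Z(H)$. In particular $[H,H^{hfc}]\leqslant F\leqslant Z(H)$, so for each $g\in H^{hfc}$ the map $h\mapsto[h,g]$ is a homomorphism $H\to F$ with finite index kernel $C_H(g)$; hence $g\in H^{fc}$ and $H^{fc}=H^{hfc}$. Since $H$ has property (T) it has finite abelianization, so $\Hom(H,F)$ is finite, and the injection $H^{hfc}/Z(H)\hookrightarrow\Hom(H,F)$, $gZ(H)\mapsto(h\mapsto[h,g])$, shows $H^{hfc}/Z(H)$ is finite.

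To finish I would run a second direct integral, now of $\mathcal M=\cL(Z(H))\rtimes_e(H/Z(H))=\int_{\widehat{Z(H)}}^\oplus\cL_{e_\chi}(H/Z(H))\,d\chi$ over $\cL(Z(H))$. Refining each fiber over its own center amounts to passing to the subgroup $R_\chi\leqslant H^{hfc}/Z(H)=Z(H/Z(H))$ of $e_\chi$-regular central elements, giving $\cL_{e_\chi}(H/Z(H))=\int_{\widehat{R_\chi}}^\oplus\cL_{(e_\chi)_\psi}\big((H/Z(H))/R_\chi\big)\,d\psi$ with factor fibers; by transitivity of direct integrals these factors are precisely the fibers of $\mathcal M$ over $\mathscr{Z}(\mathcal M)$, hence are among the $\cL_{c_y}(W)$. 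If $M_\chi=(H^{hfc}/Z(H))/R_\chi\neq\{1\}$, then $M_\chi$ is the center of $(H/Z(H))/R_\chi$, the fiber is a factor, so $(e_\chi)_\psi$ is nondegenerate on $M_\chi$; then $\cL_{(e_\chi)_\psi}(M_\chi)\cong\mathbb M_k(\C)$ with $k=\sqrt{|M_\chi|}\geqslant2$, and taking the relative commutant of this central matrix subalgebra gives $\cL_{(e_\chi)_\psi}\big((H/Z(H))/R_\chi\big)\cong\mathbb M_k(\C)\:\overline{\otimes}\:\cL_{\sigma}(W)$, since $(H/Z(H))/(H^{hfc}/Z(H))=W$. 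Comparing with the corresponding $\cL_{c_y}(W)$ would give $\cL_{c_y}(W)^{1/k}\cong\cL_{\sigma}(W)$, contradicting Theorem \ref{strongsuperrig} (with $W$ in both roles). Hence $M_\chi=\{1\}$, i.e.\ $R_\chi=H^{hfc}/Z(H)$, for a.e.\ $\chi$; but $R_\chi=H^{hfc}/Z(H)$ holds exactly when $\chi|_{[H,H^{hfc}]}=1$, and such $\chi$ form a subgroup of $\widehat{Z(H)}$ of Haar measure $|[H,H^{hfc}]|^{-1}$. Since this set must have full measure we conclude $[H,H^{hfc}]=\{1\}$, that is $H^{hfc}\leqslant Z(H)$, so $H^{hfc}=Z(H)$ and $F=\{1\}$.

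I expect the main obstacle to lie in the bookkeeping of the two fiberwise analyses — tracking amplifications, matrix sizes and $2$-cocycles through the successive direct integral decompositions — and, at each use of Theorem \ref{strongsuperrig}, in checking that the group occurring in the fibers is genuinely ICC (using that finite index subgroups of ICC groups are ICC and that the central quotients involved are ICC because $W$ is). Making precise the isomorphisms $\mathcal Mz_O\cong\mathbb M_{|O|d_\rho}(\C)\:\overline{\otimes}\:\cL_{\omega_O}(H_O/F)$ and the description of centers and relative commutants in the twisted group setting is where most of the technical work will go.
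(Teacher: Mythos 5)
Your proposal is correct in strategy, but it takes a genuinely different route from the paper's proof, essentially re-deriving by hand the Clifford/Mackey structure theory for $\cL(H)$ over the finite normal subgroup $F$ that the paper obtains in one stroke from \cite[Theorem 2.8 and Proposition 2.6]{cfqt23}. The paper writes $\cL(H)\cong\cL(H^{hfc})\rtimes H/H^{hfc}$, decomposes this once over $\mathscr Z(\cL(H))\subset\cL(H^{hfc})$, and invokes \cite[Proposition 2.6]{cfqt23} to identify each fiber $\mathcal A_x\rtimes_{\beta_x,\gamma_x}H/H^{hfc}$ with $\cL_{\eta_x}(H_x)^{n_xl_x}$ for a finite index subgroup $H_x\leqslant H/H^{hfc}\cong W$; a single application of Theorem F to the comparison with the $\cL_{\tilde c_y}(W)$ fibers kills the amplification $n_xl_x$, forcing $\mathcal A_x=\C$, i.e.\ $\cL(H^{hfc})=\mathscr Z(\cL(H))$, which gives both $H^{hfc}=Z(H)$ and $F=\{1\}$ at once. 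You instead split the proof into two passes: first cut by the $H$-orbits $O$ of $\widehat F$ and match fibers to conclude $|O|d_\rho=1$, hence $F$ abelian and centrally acted upon, so $F\leqslant Z(H)$, $[H,H^{hfc}]\leqslant Z(H)$, and (via $\Hom(H,F)$) $H^{fc}=H^{hfc}$ with $H^{hfc}/Z(H)$ finite; then run a second direct integral over $\cL(Z(H))$, identify the center of the $\chi$-fiber with $\cL(R_\chi)$ for the regular set $R_\chi\leqslant H^{hfc}/Z(H)$, and show via a second application of Theorem F together with the Haar-measure count $\mu\{\chi:\chi|_{[H,H^{hfc}]}=1\}=|[H,H^{hfc}]|^{-1}$ that $[H,H^{hfc}]=\{1\}$. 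Both routes are sound. What the paper's approach buys is concision and a single invocation of the superrigidity theorem; what yours buys is a more transparent group-theoretic narrative (the intermediate facts $F\leqslant Z(H)$, $H^{fc}=H^{hfc}$, $[H,H^{hfc}]$ finite central are made explicit) at the cost of more bookkeeping, which you correctly flag as the main remaining technical burden — in particular, in the first stage the fibers of $\cL_{\omega_O}(H_O/F)$ over its center need not yet be twisted algebras of $W_O$ on the nose (they may still carry a matrix amplification, exactly as in \cite[Proposition 2.6]{cfqt23}), but the same Theorem F argument kills that amplification simultaneously with $|O|d_\rho$, so the conclusion you draw is unaffected.
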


\begin{subproof}[Proof of Claim \ref{finiteconjiscenter}] Using \cite[Theorem 2.8]{cfqt23} we can decompose 

\begin{equation}\label{decompLH}
    \cL(H)\cong\bigoplus_{i=1}^k\mathbb{M}_{n_i}(\C)\otimes(\mathbb{D}_{m_i}\rtimes_{\alpha_i,s_i}((H^{hfc}/F)\rtimes_r H/H^{hfc})).
\end{equation}

\noindent In addition, we write $\mathbb{D}_{m_i}=\bigoplus_{t=1}^{t_i}\mathbb{D}_{s_i}^t$ for which there exist finite index subgroups $A_t^i$ of $H^{hfc}/F$ acting transitively on $\mathbb{D}_{s_i}^t$. Moreover, we obtain unitaries $x_{t,a}^i\in\mathbb{D}_{s_i}^t$ and a finite index subgroup $A_0\leqslant H^{hfc}/F$ such that

\begin{equation*}
    \mathscr{Z}(\cL(H))\cong\left\{\left(\sum_ix_a^i\right)a:a\in A_0\right\}\subseteq \mathscr{Z}(\cL(F\rtimes_{\alpha,w}H^{hfc}/F))\cong \bigoplus_i\bigoplus_j\cL_{\eta_t^i}(A_t^i).
\end{equation*}

\noindent Denote by $\mathcal{A}=\cL(F\rtimes_{\alpha,w}H^{hfc}/F)$. Taking the integral decomposition of $\cL(H)$ over its center and by \cite[Proposition 6.8]{cfqt23}, we obtain

\begin{equation*}
    \cL(H)\cong\int^\oplus_X\mathcal{A}_x\rtimes_{\beta_x,\gamma_x}H/H^{hfc}\:d\mu(x).
\end{equation*}

\noindent Here, the von Neumann algebras $(\mathcal{A}_x)_{x\in X}$ are the decomposition of $\mathcal{A}$ over $\mathscr{Z}(\cL(H))\subseteq \mathcal{A}$. On the other hand, since $\cL(H)\cong \cL(A\rtimes_cW)$, taking the decomposition over the center $\cL(A)\cong\mathscr{Z}(\cL(H))$, one can find a measure preserving map $f\colon Y\to X$ satisfying

\begin{equation}\label{isomfibers}
    \cL_{\tilde{c}_{f^{-1}(x)}}(W)\cong\mathcal{A}_x\rtimes_{\beta_x,\gamma_x}H/H^{hfc},\text{ for $\mu$-a.e. }x\in X. 
\end{equation}

\noindent Since this decomposition is over the center, the von Neumann algebras $\mathcal{A}_x\rtimes_{\beta_x,\gamma_x}H/H^{hfc}$ are factors for $\mu$-a.e.\  $x\in X$. Notice the inclusion $\mathscr{Z}(\cL(H))\subseteq \cL(F\rtimes_{\alpha,w}H^{hfc}/F)$ is of finite index. Thus, using \cite[Theorem 6.12]{cfqt23}, $\mathcal{A}_x$ are finite dimensional algebras for $\mu$-a.e.\ $x\in X$. By \cite[Proposition 2.6]{cfqt23}, there exists $l_x,n_x\in\N$ such that $\mathcal{A}_x=\mathbb{M}_{l_x}\:\otimes\:\mathbb{D}_{n_x}$, a subgroup $H_x\leqslant H/H^{hfc}$ of finite index $n_x$ and a 2-cocycle $\eta_x\colon H_x\times H_x\to\mathbb{T}$ satisfying 

\begin{equation}\label{cocycrosstwisted}
    \mathcal{A}_x\rtimes_{\beta_x,\gamma_x}H/H^{hfc}\cong \cL_{\eta_x}(H_x)^{n_xl_x}.
\end{equation}

\noindent Combining isomorphisms \eqref{isomfibers} and \eqref{cocycrosstwisted}, together with \ref{strongsuperrig}, we obtain that $n_x=l_x=1$. This shows, in particular, that $\mathcal{A}_x=\C$ for $\mu$-a.e. $x\in X$ and hence,

\begin{equation*}
    \cL(H^{hfc})\cong\cL(F\rtimes_{\alpha,t}H^{hfc}/F)\cong\int_X\mathcal{A}_xd\mu(x)\cong\mathscr{Z}(\cL(H)).
\end{equation*}

\noindent Therefore, $H^{hfc}=Z(H)$ and $F=\{1\}$.\end{subproof}

Altogether, Claims \ref{quotientiswreathlikeprod} and \ref{finiteconjiscenter} yield $H\cong Z(H)\rtimes_d H/Z(H)\cong Z(H)\rtimes_d W$. To recover the center, we examine the isomorphism $\Theta\colon \cL(A)\rtimes_{\tilde{c}}W\to \cL(Z(H))\rtimes_{\tilde{d}}H/Z(H)$ in its integral decomposition form

\begin{equation*}
    \int_Y\cL_{\tilde{c}_y}(W)d\nu(y)\cong\int_X\cL_{\tilde{d}_x}(H/Z(H))d\mu(x).
\end{equation*}

\noindent By \cite[Theorem 2.1.14]{spaas} there exists full measure sets $X'\subset X$ and $Y'\subset Y$, a Borel isomorphism $f\colon X'\to Y'$ with $f(\mu)$ equivalent to $\nu$, and a measurable field $y\mapsto \Theta_y$ of tracial isomorphisms $\Theta_y
\colon \cL_{\tilde{c}_y}(W) \to \cL_{\tilde{d}_{f^{-1}(y)}}(H/Z(H))$ such that 

\begin{equation*}
    \Theta=\int_Y^{\oplus}\Theta_yd\nu(y).
\end{equation*}

\noindent Using \ref{strongsuperrig} one can find a group isomorphism $\rho_y\colon W\to H/Z(H)$, a map $\xi^y\colon W\to \mathbb{T}$ satisfying 

\begin{equation*}\label{fiberscohomologous1}
    \tilde{d}_{f^{-1}(y)}(\rho_y(g),\rho_y(h))=\overline{\xi^y_g}\overline{\xi^y_h} \xi^y_{gh} \tilde{c}_y(g,h)\text {, for all }g,h \in W,
\end{equation*} 

\noindent a unitary $w_y\in \cL_{\tilde{d}_{f^{-1}(y)}}(H/Z(H))$ and a character $\eta^y\colon W\to\mathbb{T}$ such that 

\begin{equation*}        
    \Theta_y(u_g) = \eta^y_g\xi^y_{g} w_y v_{\rho_y(g)}w_y^*\text{, for all }g\in W.
\end{equation*} 

\noindent Next we will combine the arguments from \cite[Proposition 6.9 and Theorem A]{cfqt23} to describe the map $\Theta$. Since $W$ has property $(T)$, there are at most countably many group isomorphisms, denoted by $(\rho_n)_{n\in\N}$. Consider the set 

\begin{align*}
    Y_n:=\{(y,&(\xi_g)_{g\in W},(\eta_g)_{g\in W},w)\in Y\times\mathscr{B}(\ell^2(W,H/Z(H)))\times\mathscr{B}(\ell^2(W,H/Z(H)))\times\mathscr{B}(\ell^2(H/Z(H))):\\
    &\xi_g,\eta_g\in\mathbb{T}_{f^{-1}(y)}, \text{ for all }g\in W,\:\eta\text{ is a group homomorphism,}\\
    &w\in\mathscr{U}(\cL_{\tilde{d}_{f^{-1}(y)}}(H/Z(H)))\text{ and }\Theta_y(u_g)w=\eta_g\xi_gwv_{\rho_n(g)}\text{ for all }g\in W\}.
\end{align*}

\noindent Since $y\mapsto\Theta_y(u_g)$ and $y\mapsto v_{\rho_n(g)}$ are measurable fields of operators, $Y_n$ is a Borel set. Observe that $\pi_1(Y_n)\subseteq Y$, where $\pi_1$ is the projection onto the first coordinate. By \cite[Theorem A.16]{takesaki} there exist measurable crossed sections $s_n^g,r_n^g\colon \pi_1(Y_n)\to \mathscr{B}(\ell^2(W,H/Z(H)))$, for each $g\in W$, and $w_n\colon \pi_1(Y_n)\to\mathscr{B}(\ell^2(H/Z(H)))$, such that $(y,s_n^g(y),r_n^g(y),w_n(y))\in Y_n$ for all $y\in\pi_1(Y_n)$. 

By the prior paragraph, we see that $\{\pi_1(Y_n)\}_n$ forms a measurable partition of $Y$. Hence, letting $e_n=\chi_{\pi_1(Y_n)}\in\mathscr{P}(\cL(Z(H)))$, $w=\int_Y^{\oplus}w_yd\nu(y)$ where $w_y=w_n(y)$ for $y\in\pi_1(Y_n)$, $\xi_g=\int_Y^{\oplus}s_y^gd\nu(y)$ where $s_y^g=s_n^g(y)$ for $y\in \pi_1(Y_n)$, and $\eta_g=\int_Y^{\oplus}r_y^gd\nu(y)$ where $r_y^g=r_n^g(y)$ for $y\in \pi_1(Y_n)$, we obtain

\begin{equation}\label{isomformula}
    \Theta(u_g)=\int_Yr_y^gs_y^gw_yv_{\rho_y(g)}w_y^*d\nu(y)=\eta_g\xi_gw\left(\sum_ne_nv_{\rho_n(g)}\right)w^*,\text{ for all }g\in W.
\end{equation}

\noindent In particular, we have  

\begin{equation}\label{cornercohomologous}
    \Theta(\tilde{c}(g,h))=\xi_g\xi_h\xi_{gh}^*\left( \sum_n\tilde{d}(\rho_n(g),\rho_n(h))e_n\right ),\text{ for all }g,h\in W.
\end{equation}

\noindent From \eqref{cornercohomologous} the $\ast$-isomorphism $\Theta\colon \cL(A)\rtimes_{\tilde{c}}W\to \cL(H)$ satisfies

\begin{equation}\label{coh2cocycl}
    \Theta(u_{c(g,h)})=\xi_g\xi_h\xi_{gh}^*\left(\sum_n v_{d(\rho_n(g),\rho_n(h))}e_n \right),\quad\text{for all }g,h\in W.
\end{equation}

\noindent Let $\mathscr{U}$ denote the group of all unitaries in $\cL(Z(H))$. Since $Z(H)$ is abelian then so is $\mathscr U$ and therefore $\mathscr A:=\Theta(A)(\sum_n Z(H)e_n)\leqslant \mathscr{U}$ is a normal subgroup. Denote by $\widehat{\mathscr{U}}:=\mathscr{U}/{\mathscr A}$. From relation \eqref{coh2cocycl}, $\xi_{gh} \Theta(u_{c(g,h)})=\xi_g\xi_h\left(\sum_n v_{d(\rho_n(g),\rho_n(h))}e_n \right)$ for all $g,h\in W$, and viewing this equation in $\widehat{\mathscr{U}}$, we have

\begin{equation*}
    \widehat{\xi_{gh}}=\widehat{\xi_g}\widehat{\xi_h},\quad\text{for all }g,h\in W.
\end{equation*}

\noindent Hence, the map $\widehat{\xi}\colon W\to \widehat{\mathscr{U}}$ given by $g\mapsto \widehat{\xi_g}$ is a group homomorphism. Since $\widehat{\mathscr{U}}$ is abelian, $\widehat{\xi_g}=1$ for all $g\in[W,W]$. This means the map $\widehat{\xi}$ factors through $[W,W]$, say $r\colon W/[W,W]\to\widehat{\mathscr{U}}$. Since $W$ has property (T), the image of $r$ is a finite subgroup of $\widehat{\mathscr{U}}$. Let $\pi\colon W\to W/[W,W]$ be the canonical quotient map. Since $\xi_g\in r_{\pi(g)}\Theta(A)(\sum_nZ(H)e_n)$, one can find maps $k\colon W\to A$ and $l^n\colon W\to Z(H)$, for $n\in \mathbb N$ satisfying

\begin{equation}\label{formulaxi}
    \xi_g=r_{\pi(g)}\Theta(u_{{k_g}^{-1}})\left(\sum_nv_{l^n_g}e_n\right ) \text{ for all }g\in W.
\end{equation}

\noindent Substituting these formulae in \eqref{coh2cocycl} gives

\begin{equation}\label{ThetaonLA}
    \Theta(u_{k_gk_hk_{gh}^{-1}c(g,h)})=\sum_n v_{l^n_gl^n_h(l^n_{gh})^{-1}d(\rho_n(g),\rho_n(h))}e_n,\quad\text{for all }g,h\in W.
\end{equation}

\noindent Moreover, \eqref{formulaxi} and \eqref{isomformula} imply that 

\begin{equation}\label{formulahom2}
    \Theta (u_{k_g g} )=\eta_gr_{\pi(g)}w  \left (\sum_n v_{l^n_g \rho_n(g)} e_n\right) w^*, \text{ for all }g \in W.
\end{equation}

\noindent Next we observe that the group $2$-cocycles $t(g,h):=k_gk_hk_{gh}^{-1}c(g,h) \in A$ and  $s^n(\rho_n(g),\rho_n(h)):=l^n_gl^n_h(l^n_{gh})^{-1}d(\rho_n(g),\rho_n(h)))\in Z(H)$ are cohomologous to $c $ and $d$, respectively. Let $\tilde{g}=k_gg$ for $g\in W$ and note that $W\cong \tilde{W}$ and $G=Z\rtimes_t\tilde{W}$. Let $A_0$ be the subgroup of $A$ generated by the image of $t$. The formulae \eqref{ThetaonLA} and \eqref{formulahom2} yield maps $\delta_n\colon A_0\rtimes_{t}\tilde{W}\to Z(H)\rtimes_{s^n}H/Z(H)=H$ with 

\begin{equation*}
    \Theta(u_g) = \eta_{\pi_1(g)}r_{\pi_2(g)}w \left(\sum_nv_{\delta_n(g)}e_n\right)w^*,
\end{equation*}

\noindent for all $g\in A_0\rtimes_t \tilde{W}$ and for all $n\in \mathbb N$. Here $\pi_1\colon A_0\rtimes_t\tilde{W}\to \tilde{W}$ is the canonical quotient map. From construction one can check that ${\rm Im}(\rho_n)\subset {\rm Im}(\delta_n)$, for all $n\in \mathbb N$. This shows part (1) of the theorem.

To argue for the other two parts, recall that for all $g\in [\tilde{W},\tilde{W}]$ we have  $r_g,\eta_g=1$. Let $A'\leqslant A_0$ be the group generated by the image of the cocycle $t$ on $[\tilde{W},\tilde{W}]$ and consider $G_1:=A'\rtimes_t[\tilde{W},\tilde{W}]$. Note that $G_1$ is finite index in $G$ by Lemma \ref{finiteindex[W,W]}. If we denote by $(f_n)_n \subset \cL(A)$ the projections satisfying  $\Theta(f_n)=e_n$,  for all $n\in \mathbb  N$, then the maps $\delta_n\colon G_1\to H$ satisfy $\Theta(u_gf_n)=wv_{\delta_n(g)}e_nw^*$ for all $g\in G_1$ and for all $n\in\N$. Summing these relations we get

\begin{equation}\label{isomon[W,W]}
    \Theta (u_g)= w \left ( \sum_n v_{\delta_n(g)} e_n \right ) w^*\text{ for all }g\in G_1.
\end{equation}

\noindent Consider the subgroups $A_n=\{g\in A':u_gf_n=f_n\}$, $B_n=\{h\in Z(H):v_he_n=e_n\}$ and $H_n=\{h\in H:\text{there exists }g\in G_1 \text{ with }\Theta(u_gf_n)=wv_he_nw^*\}$. Note that $A_n$ and $B_n$ are finite. Moreover, one can check that $\delta_n(A_n)\leqslant B_n$ and also $H_n=\text{Im}(\delta_n)B_n$ for all $n\in \mathbb N$. The last assertion follows because for $h\in H_n$, $v_{\delta_n(g)}e_n=v_he_n$, or $h^{-1}\delta_n(g)\in B_n$. Additionally, the group $H_n$ satisfies $\cL(H_n)e_n=\Theta(\cL(G_1))e_n$, which implies that $H_n$ is a finite index subgroup of $H$. 

Finally, the induced map $\widehat{\delta_n}\colon G_1/A_n\to \text{Im}(\delta_n)B_n/B_n$ given by $\widehat{\delta_n}(gA_n)=\delta_n(g)B_n$ is an isomorphism. This combined with the previous relations conclude parts (2) and (3) of the theorem. \end{proof}

In the remaining part, we will demonstrate \ref{TheoremA}, \ref{theoremB} and \ref{TheoremC}, assuming the same setup and using the same notations and relations from the proof of Theorem \ref{mainresult}. 

\begin{proof}[\textbf{\textbf{Proof of \ref{TheoremA}}}] The conclusion of Theorem \ref{mainresult}, implies that $\widehat{\delta_n}\colon G_1/A_n\to H_n/B_n$ is a group isomorphism; hence $G\cong_v H$, as desired. \end{proof}

\begin{proof}[\textbf{Proof of \ref{theoremB}}] First, since $G$ has trivial abelianization, so has $\tilde{W}$.  Therefore the map $\Theta$ is given by relation \eqref{isomon[W,W]} on $G_1$. Moreover, the trivial abelianization of $G$ implies $A'=A_0=A$, and so $G_1=G$. Second, since $G$ is torsion free, $A_n=\{1\}$ for all $n$ so $\widehat{\delta_n}\colon G\to H/B_n$ is an injective group homomorphism onto $H_n/B_n$. In particular, $H_n/B_n$ has trivial abelianization for all $n$. 

Fix $n\in\N$. Note that $[gB_n,hB_n]=[g,h]B_n$ for all $g,h\in\text{Im}(\delta_n)$. Thus, for $a\in \text{Im}(\delta_n)$, there exists $g_1,...,g_l\in[\text{Im}(\delta_n),\text{Im}(\delta_n)]$ with $aB_n=g_1g_2\cdots g_lB_n$. This means that $a\in[\text{Im}(\delta_n),\text{Im}(\delta_n)]B_n$ and hence, $H_n=[\text{Im}(\delta_n),\text{Im}(\delta_n)]B_n\subseteq [H,H]B_n$. Moreover, since $H/H_n$ is abelian, we obtain $H_n=[H,H]B_n$. Observe that for $n,m\in\N$, we have

\begin{equation*}
    G\cong[H,H]B_n/B_n\cong [H,H]/([H,H]\cap B_n)\geqslant ([H,H]\cap B_m)/(([H,H]\cap B_n)\cap B_m).
\end{equation*}

\noindent Since $G$ is torsion free, each $B_m$ is finite, and $n,m$ are arbitrary, we have $[H,H]\cap B_n=[H,H]\cap B_m$ for all $n,m\in\N$. Next, we show the collection of $\{B_n\}_n$ is finite. Otherwise, $\{[H,H]B_n\}_n$ is an infinite collection of finite index subgroups of $H$ each containing $[H,H]$. Note that $[H,H]\leqslant H$ is finite index, as $H$ has property (T). Since $H$ is finitely generated, it can only have finitely many subgroups of a given finite index, meaning that the collection $\{H_n\}_n$ cannot be infinite. Thus, we obtain there are only finitely many $B_n$'s and $H_n$'s.
\end{proof}

\begin{proof}[\textbf{Proof of \ref{TheoremC}}] As $G$ has trivial abelianization, we have $G= G_1$. Since $\text{Out}(G)=\{1\}$, relation \eqref{isomon[W,W]} implies $n=1$ and the existence of a unique $\delta_1\colon G \to H$ with

\begin{equation*}
    \Theta(u_g)=wv_{\delta_1(g)}w^*,\text{ for all }g\in G.
\end{equation*}

\noindent Moreover, $A_1,B_1=\{1\}$ and hence $\delta_1$ is a group isomorphism.
\end{proof}

\vspace{2mm}

\subsection{A broader open problem}\label{section5.1} Our results demonstrate that the groups constructed in \ref{TheoremD} and \ref{TheoremC} satisfy the following natural extension of Connes' rigidity conjecture:

\begin{conj} Let $G$ be any property (T) group whose central quotient $G/Z(G)$ is an ICC group. Then for any group $H$ satisfying $\cL(G)\cong\cL(H)$ we have $G\cong_vH$.
\end{conj}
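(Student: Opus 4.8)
The plan is to follow the two–step "reconstruct the center, then analyze the fibers" strategy of \cite{cfqt23}, now feeding in the twisted superrigidity of Theorem \ref{strongsuperrig} in place of the untwisted result used there. Write $\mathcal M=\cL(H)$. In \emph{Step 1} I would first invoke Theorem \ref{almostcenter} (the non-split analogue of \cite[Theorem B]{cfqt23}) to get that $H^{fc}$ has finite index in $H^{hfc}$, that $F:=[H^{fc},H^{fc}]$ is finite, and to obtain the decomposition of $\cL(H^{fc})z$ over $\mathscr Z(\mathcal M)$, where $z=|F|^{-1}\sum_{f\in F}u_f$. Then I would run two parallel direct–integral comparisons. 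First, over the center of $\cL(H)z$: since $H/F=(H^{hfc}/F)\rtimes_{\alpha,d}(H/H^{hfc})$, \cite[Propositions 6.6 and 6.8]{cfqt23} give a decomposition $\int \mathcal D_x\rtimes_{\beta_x,\tilde{d}_x}(H/H^{hfc})\,d\mu_0(x)$ with each $\mathcal D_x$ completely atomic, while the same algebra equals $\Theta(\cL(A)p\rtimes_{\tilde{c}p}W)$, which decomposes over $\cL(A)$ as $\int\cL_{\tilde{c}p_y}(W)\,d\nu_0(y)$. By \cite[Theorem 2.1.14]{spaas} the two fields of fibers are isomorphic; since each $\mathcal D_x\rtimes (H/H^{hfc})$ is a II$_1$ factor, $\mathcal D_x$ is finite dimensional, so by \cite[Proposition 2.6]{cfqt23} the fiber is an amplification $\cL_{\eta_x}(H_x)^{n_xl_x}$ of a twisted group factor of a finite–index subgroup $H_x\leqslant H/H^{hfc}$. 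Feeding the isomorphism $\cL_{\tilde{c}p_y}(W)\cong\cL_{\eta_x}(H_x)^{n_xl_x}$ into Theorem \ref{strongsuperrig} forces $n_xl_x=1$ and $W\cong H_x=H/H^{hfc}$, so $\mathcal D_x=\C$ and $H^{hfc}/F=Z(H/F)$. Rerunning the argument over the full center of $\cL(H)$, now using the decomposition \cite[Theorem 2.8]{cfqt23} and the finite–index control \cite[Theorem 6.12]{cfqt23} to see the fibers are finite dimensional, and again applying Theorem \ref{strongsuperrig}, forces those fibers to be $\C$ as well; hence $\cL(H^{hfc})=\mathscr Z(\cL(H))$, i.e.\ $H^{hfc}=Z(H)$ and $F=\{1\}$, so $H\cong Z(H)\rtimes_d W$.

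In \emph{Step 2} I would decompose $\Theta\colon\cL(A)\rtimes_{\tilde{c}}W\to\cL(Z(H))\rtimes_{\tilde{d}}W$ over $\cL(A)\cong\mathscr Z(\cL(H))$, obtaining a Borel isomorphism $f$ of base spaces and a measurable field $y\mapsto\Theta_y$ of tracial isomorphisms $\cL_{\tilde{c}_y}(W)\to\cL_{\tilde{d}_{f^{-1}(y)}}(H/Z(H))$. Applying Theorem \ref{strongsuperrig} to each $\Theta_y$ yields a group isomorphism $\rho_y\colon W\to H/Z(H)$, a map $\xi^y\colon W\to\mathbb T$ with $\tilde{d}_{f^{-1}(y)}(\rho_y(g),\rho_y(h))=\overline{\xi^y_g}\,\overline{\xi^y_h}\,\xi^y_{gh}\,\tilde{c}_y(g,h)$, a character $\eta^y$, and a unitary $w_y$ with $\Theta_y(u_g)=\eta^y_g\xi^y_g w_y v_{\rho_y(g)}w_y^*$. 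Since $W$ has property (T) there are only countably many isomorphisms $W\to H/Z(H)$, say $(\rho_n)$; partitioning $Y$ into the Borel sets $\pi_1(Y_n)$ where $\rho_y=\rho_n$ and invoking a measurable cross–section theorem (\cite[Theorem A.16]{takesaki}), I would assemble global data $e_n=\chi_{\pi_1(Y_n)}\in\cL(Z(H))$, a unitary $w\in\mathscr U(\cL(H))$, and maps $\xi,\eta\colon W\to\mathscr U(\cL(Z(H)))$ such that $\Theta(u_g)=\eta_g\xi_g w\big(\sum_n e_n v_{\rho_n(g)}\big)w^*$.

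In \emph{Step 3} (the cohomological cleanup) the previous formula gives $\Theta(u_{c(g,h)})=\xi_g\xi_h\xi_{gh}^*\sum_n v_{d(\rho_n(g),\rho_n(h))}e_n$. Passing to $\widehat{\mathscr U}:=\mathscr U/\mathscr A$ with $\mathscr A=\Theta(A)\big(\sum_n Z(H)e_n\big)$ turns $\widehat\xi$ into a homomorphism $W\to\widehat{\mathscr U}$; being abelian–valued it factors through $W/[W,W]$, and property (T) of $W$ makes its image a finite abelian group $\mathscr V$ — this is the datum $r\colon W/[W,W]\to\mathscr V$. Writing $\xi_g=r_{\pi(g)}\Theta(u_{k_g^{-1}})\big(\sum_n v_{l^n_g}e_n\big)$ and substituting, the cocycles $t(g,h)=k_gk_hk_{gh}^{-1}c(g,h)\in A$ and $s^n(\rho_n(g),\rho_n(h))=l^n_gl^n_h(l^n_{gh})^{-1}d(\rho_n(g),\rho_n(h))\in Z(H)$ are cohomologous to $c$ and $d$; setting $\tilde{g}=k_gg$ identifies $G$ with $G_0:=A_0\rtimes_t\tilde{W}$ and produces maps $\delta_n\colon G_0\to H$ realizing relation (1). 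Finally, restricting to $[\tilde{W},\tilde{W}]$, where $\eta=r=1$, and using Lemma \ref{finiteindex[W,W]} to see that $G_1:=A'\rtimes_t[\tilde{W},\tilde{W}]$ has finite index, I would introduce $A_n=\{g\in A':u_gf_n=f_n\}$, $B_n=\{h\in Z(H):v_he_n=e_n\}$ and $H_n=\mathrm{Im}(\delta_n)B_n$, then check that $\delta_n(A_n)\leqslant B_n$, that $\cL(H_n)e_n=\Theta(\cL(G_1))e_n$ (so $H_n$ has finite index in $H$), and that the induced map $\widehat{\delta_n}\colon G_1/A_n\to\mathrm{Im}(\delta_n)B_n/B_n$ is an isomorphism; this gives parts (2) and (3), and in particular $G\cong_v H$.

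The genuinely hard analytic inputs — the twisted W$^*$-superrigidity of $\cL_c(W)$ (Theorem \ref{strongsuperrig}) and the center–reconstruction mechanism (Theorem \ref{almostcenter}) — are available, so the main obstacle is organizational: setting up the two direct–integral comparisons so that Theorem \ref{strongsuperrig} applies on almost every fiber while keeping all the selected objects measurable, and then carrying out the two–sided cocycle bookkeeping of Step 3, where one must track simultaneously the modification of $c$ on the $A$–side and of $d$ on the $Z(H)$–side and verify that the torsion subgroups $A_n,B_n$ and the index subgroups $H_n$ behave coherently. I expect this last part to be the most delicate point, since it is precisely where the passage from the fiberwise data to a genuine (virtual) isomorphism of groups is extracted.
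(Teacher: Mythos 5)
You have not proved the statement you were asked to prove, because every load-bearing input in your argument is a result that the paper establishes only under much stronger hypotheses on $G/Z(G)$ than ``ICC property (T).''

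The statement is a \emph{Conjecture} in the paper, not a theorem; the authors explicitly leave it open and explain why it is out of reach. Your Step~1 invokes Theorem~\ref{almostcenter}, which is only proved when $G/Z(G)$ is a wreath-like product $W\in\mathcal{WR}(C,D\curvearrowright I)$ with $C$ free abelian, $D$ a nontrivial ICC subgroup of a hyperbolic group, and $D\curvearrowright I$ having amenable stabilizers. More fatally, your Steps~1 and 2 both feed the fiberwise data into Theorem~\ref{strongsuperrig}, whose hypotheses again demand $G\in\mathcal{WR}(A,B\curvearrowright I)$ of this restricted type. Under the conjecture's hypotheses $G/Z(G)$ is merely an arbitrary ICC property (T) group, so neither theorem applies to its fibers. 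What you would actually need at the fiber level is the twisted version of Connes' rigidity conjecture (Conjecture~\ref{twistedCRC} in the paper), which the paper states as open; the authors even remark that their conjecture is ``as difficult as proving or disproving Connes' original conjecture'' precisely because the fiber analysis reduces to Conjecture~\ref{twistedCRC}. In other words, your proof silently assumes the hard part. The organizational and cohomological bookkeeping you describe in Steps~2 and 3 is not the obstacle; the obstacle is that twisted W$^*$-superrigidity of $\cL_c(G/Z(G))$ is unknown for general ICC property (T) groups, and no amount of careful measurability selection circumvents that. Your outline does correctly reproduce the structure of Theorem~\ref{mainresult}'s proof, so it would go through if one added the hypothesis $G/Z(G)\in\mathcal{WR}(C,D\curvearrowright I)$ as in \ref{TheoremA} --- but then it would prove that theorem, not the conjecture.
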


Stating this conjecture for groups $G$ with infinite centers and ICC central quotient $G/Z(G)$ is a natural step to broaden this study, which in some sense is the closest possible extension of Connes' original conjecture. Indeed, as prior works \cite{cfqt23,DV24center} and the current paper emphasize, proving W$^*$-superrigidity results for these groups relies first on fairly general techniques for reconstructing their center through classifying subalgebras that satisfy weak compactness conditions. This, in turn, enables one to reduce our conjecture to establishing W$^*$-superrigidity results for the fibers, which, in this case, are the twisted group von Neumann algebras of the ICC property (T) central quotient $G/Z(G)$. This is nothing other than the twisted version of Connes' rigidity conjecture (see Conjecture \ref{twistedCRC}). This suggests that proving or disproving our conjecture is as difficult as proving or disproving Connes' original conjecture. 

However, nothing prevents one from trying to pursue our conjecture for the entire class of property (T) groups. In this generality, it would be more appropriate to formulate the following.  

\begin{ques}\label{crcgf} Let $G$ be any property (T) group. Is it true that whenever $H$ is an arbitrary group satisfying $\cL(G)\cong \cL(H)$ then we have $G\cong_v H$? \end{ques}

Since general property (T) groups $G$ can have a much more complicated structure \cite{Ershov} than the subclass of central extensions with ICC central quotient, it seems less likely that this has a positive answer in its full generality. Specifically, reducing this rigidity to the study of simpler structures, as in our cases, is rather challenging. While we suspect the answer for  Question \ref{crcgf} is negative, we currently do not have any concrete counterexamples.

Bekka described in \cite{bekka21} the fibers of the integral decomposition of $\mathcal{L}(G)$ as von Neumann algebras arising from induced representations over the FC-center $G^{fc}$; that is, 

\begin{equation*}
    \cL(G)=\int_X^{\oplus}\text{Ind}_{G^{fc}}^G\pi_x(G)''d\mu(x).
\end{equation*}

\noindent Thus, for property (T) groups $G$ whose FC-radical is highly non-abelian (e.g., subgroups of $\mathfrak{S}_\infty$), this would require a new structural study of algebras that have not been considered in this setting before. Moreover, even attempting to leverage the alternative description of the fibers as cocycle crossed product von Neumann algebras (see \cite[Theorem 6.8]{cfqt23}) seems technically quite difficult. Thus, we believe that even the mere problem of identifying examples of property (T) groups $G$ whose $G^{fc}$ is nonvirtually abelian and locally finite, for which Question \ref{crcgf} has a positive answer, would represent a significant advancement.

Manufacturing a counterexample to Question \ref{crcgf} is closely related to the problem of identifying flexible groups; these are groups 
$G$ for which there exist many non-isomorphic groups $G\ncong H$ such that $\mathcal{L}(G) \cong \mathcal{L}(H)$. At present, every known example of flexible groups is derived from constructions involving infinite abelian groups, ICC amenable groups, or any canonical combinations of these. Motivated by this observation, and as highlighted by Stefaan Vaes recently at the workshop BIRS 24w5174, we propose the following open problem:

\begin{prob} Identify new flexible groups beyond the class of infinite abelian groups, ICC amenable groups, and their combinations. 
\end{prob}

\printbibliography 

@article {connesjones,
    AUTHOR = {Connes, A. and Jones, V.},
     TITLE = {Property {$T$} for von {N}eumann algebras},
   JOURNAL = {Bull. London Math. Soc.},
  FJOURNAL = {The Bulletin of the London Mathematical Society},
    VOLUME = {17},
      YEAR = {1985},
    NUMBER = {1},
     PAGES = {57--62},
    NUMBER = {1},
     PAGES = {57--62},
}

@article {cdad20,
    AUTHOR = {Chifan, Ionu\c{t} and Diaz-Arias, Alec and Drimbe, Daniel},
     TITLE = {New examples of {$W^*$} and {$C^*$}-superrigid groups},
   JOURNAL = {Adv. Math.},
  FJOURNAL = {Advances in Mathematics},
    VOLUME = {412},
      YEAR = {2023},
     PAGES = {Paper No. 108797, 57},
    shorthand = {CD-AD20},
}

@inproceedings {io18,
    AUTHOR = {Ioana, Adrian},
     TITLE = {Rigidity for von {N}eumann algebras},
 BOOKTITLE = {Proceedings of the {I}nternational {C}ongress of
              {M}athematicians---{R}io de {J}aneiro 2018. {V}ol. {III}.
              {I}nvited lectures},
     PAGES = {1639--1672},
 PUBLISHER = {World Sci. Publ., Hackensack, NJ},
      YEAR = {2018},
}

@inproceedings {vaes10,
    AUTHOR = {Vaes, Stefaan},
     TITLE = {Rigidity for von {N}eumann algebras and their invariants},
 BOOKTITLE = {Proceedings of the {I}nternational {C}ongress of
              {M}athematicians. {V}olume {III}},
     PAGES = {1624--1650},
 PUBLISHER = {Hindustan Book Agency, New Delhi},
      YEAR = {2010},
}

@article {popa06,
    AUTHOR = {Popa, Sorin},
     TITLE = {Strong rigidity of {$\rm II_1$} factors arising from malleable
              actions of {$w$}-rigid groups. {I}},
   JOURNAL = {Invent. Math.},
  FJOURNAL = {Inventiones Mathematicae},
    VOLUME = {165},
      YEAR = {2006},
    NUMBER = {2},
     PAGES = {369--408},
     shorthand = {Pop04}
}

@article {cios22,
    AUTHOR = {Chifan, Ionu\c{t} and Ioana, Adrian and Osin, Denis and Sun, Bin},
     TITLE = {Wreath-like products of groups and their von {N}eumann
              algebras {I}: {$\rm W^\ast $}-superrigidity},
   JOURNAL = {Ann. of Math. (2)},
  FJOURNAL = {Annals of Mathematics. Second Series},
    VOLUME = {198},
      YEAR = {2023},
     NUMBER = {3},
     PAGES = {1261--1303},
    shorthand = {CIOS21}
}

@article {cios22b,
    AUTHOR = {Chifan, Ionu\c{t} and Ioana, Adrian and Osin, Denis and Sun,
              Bin},
     TITLE = {Wreath-like products of groups and their von {N}eumann
              algebras {II}: outer automorphisms},
   JOURNAL = {Duke Math. J.},
  FJOURNAL = {Duke Mathematical Journal},
    VOLUME = {175},
      YEAR = {2026},
    NUMBER = {2},
     PAGES = {287--359},
     shorthand = {CIOS23}
}

@article {bekka21,
    AUTHOR = {Bekka, Bachir},
     TITLE = {The {P}lancherel formula for countable groups},
   JOURNAL = {Indag. Math. (N.S.)},
  FJOURNAL = {Koninklijke Nederlandse Akademie van Wetenschappen.
              Indagationes Mathematicae. New Series},
    VOLUME = {32},
      YEAR = {2021},
    NUMBER = {3},
     PAGES = {619--638},
}

@article {dhi19,
    AUTHOR = {Drimbe, Daniel and Hoff, Daniel and Ioana, Adrian},
     TITLE = {Prime {$\rm II_1$} factors arising from irreducible lattices
              in products of rank one simple {L}ie groups},
   JOURNAL = {J. Reine Angew. Math.},
  FJOURNAL = {Journal f\"{u}r die Reine und Angewandte Mathematik. [Crelle's
              Journal]},
    VOLUME = {757},
      YEAR = {2019},
     PAGES = {197--246},
     shorthand = {DHI16}
}

@article {ci18,
    AUTHOR = {Chifan, Ionu\c{t} and Ioana, Adrian},
     TITLE = {Amalgamated free product rigidity for group von {N}eumann
              algebras},
   JOURNAL = {Adv. Math.},
  FJOURNAL = {Advances in Mathematics},
    VOLUME = {329},
      YEAR = {2018},
     PAGES = {819--850},
     shorthand = {CI17}
}

@book {spaas,
    AUTHOR = {Spaas, Pieter},
     TITLE = {Deformation and {R}igidity in von {N}eumann {A}lgebras:
              {C}artan {S}ubalgebras and {T}ensor {P}roduct
              {D}ecompositions},
      NOTE = {Thesis (Ph.D.)--University of California, San Diego},
 PUBLISHER = {ProQuest LLC, Ann Arbor, MI},
      YEAR = {2019},
    PAGES = {160},
   MRCLASS = {Thesis},
}

@article {ipv10,
    AUTHOR = {Ioana, Adrian and Popa, Sorin and Vaes, Stefaan},
     TITLE = {A class of superrigid group von {N}eumann algebras},
   JOURNAL = {Ann. of Math. (2)},
  FJOURNAL = {Annals of Mathematics. Second Series},
    VOLUME = {178},
      YEAR = {2013},
    NUMBER = {1},
     PAGES = {231--286},
     shorthand = {IPV10},
}

@article {ioa10,
    AUTHOR = {Ioana, Adrian},
     TITLE = {{$W^*$}-superrigidity for {B}ernoulli actions of property
              ({T}) groups},
   JOURNAL = {J. Amer. Math. Soc.},
  FJOURNAL = {Journal of the American Mathematical Society},
    VOLUME = {24},
      YEAR = {2011},
    NUMBER = {4},
     PAGES = {1175--1226},
    shorthand = {Ioa10},
}

@incollection {popa07,
    AUTHOR = {Popa, Sorin},
     TITLE = {Deformation and rigidity for group actions and von {N}eumann
              algebras},
 BOOKTITLE = {International {C}ongress of {M}athematicians. {V}ol. {I}},
     PAGES = {445--477},
 PUBLISHER = {Eur. Math. Soc., Z\"{u}rich},
      YEAR = {2007},
}

@article {mvn43,
    AUTHOR = {Murray, F. J. and von Neumann, J.},
     TITLE = {On rings of operators. {IV}},
   JOURNAL = {Ann. of Math. (2)},
  FJOURNAL = {Annals of Mathematics. Second Series},
    VOLUME = {44},
      YEAR = {1943},
    PAGES = {716--808},
}

@article{connes80F,
 ISSN = {03794024, 18417744},
 author = {Connes, A.},
 journal = {Journal of Operator Theory},
 number = {1},
 pages = {151--153},
 publisher = {Theta Foundation},
 title = {A FACTOR OF TYPE {${\rm II}_1$}
          WITH COUNTABLE FUNDAMENTAL GROUP},
 volume = {4},
 year = {1980}
}

@incollection {connes80T,
    AUTHOR = {Connes, A.},
     TITLE = {Classification des facteurs},
 BOOKTITLE = {Operator algebras and applications, {P}art 2 ({K}ingston,
              {O}nt., 1980)},
    SERIES = {Proc. Sympos. Pure Math.},
     VOLUME = {38},
     PAGES = {43--109},
 PUBLISHER = {Amer. Math. Soc., Providence, RI},
      YEAR = {1982},
shorthand = {Con80},
}

@book {takesaki,
    AUTHOR = {Takesaki, M.},
     TITLE = {Theory of operator algebras. {I}},
    SERIES = {Encyclopaedia of Mathematical Sciences},
    VOLUME = {124},
      NOTE = {Reprint of the first (1979) edition,
              Operator Algebras and Non-commutative Geometry, 5},
 PUBLISHER = {Springer-Verlag, Berlin},
      YEAR = {2002},
    PAGES = {xx+415},
}

@article {bv13,
    AUTHOR = {Brothier, Arnaud and Vaes, Stefaan},
     TITLE = {Families of hyperfinite subfactors with the same standard
              invariant and prescribed fundamental group},
   JOURNAL = {J. Noncommut. Geom.},
  FJOURNAL = {Journal of Noncommutative Geometry},
    VOLUME = {9},
      YEAR = {2015},
    NUMBER = {3},
     PAGES = {775--796},
    shorthand = {BV13}
}

@book {br,
    AUTHOR = {Brown, Kenneth S.},
     TITLE = {Cohomology of groups},
    SERIES = {Graduate Texts in Mathematics},
    VOLUME = {87},
      NOTE = {Corrected reprint of the 1982 original},
 PUBLISHER = {Springer-Verlag, New York},
      YEAR = {1994},
     PAGES = {x+306},    
}

@article {DV24,
    AUTHOR = {Donvil, Milan and Vaes, Stefaan},
     TITLE = {{$W^*$}-{S}uperrigidity for cocycle twisted group von
              {N}eumann algebras},
   JOURNAL = {Invent. Math.},
  FJOURNAL = {Inventiones Mathematicae},
    VOLUME = {240},
      YEAR = {2025},
    NUMBER = {1},
     PAGES = {193--260},
shorthand = {DV24a},
}

@article {DV24center,
    AUTHOR = {Donvil, Milan and Vaes, Stefaan},
     TITLE = {{${\rm W}^*$}-superrigidity for groups with infinite center},
   JOURNAL = {Adv. Math.},
  FJOURNAL = {Advances in Mathematics},
    VOLUME = {480},
      YEAR = {2025},
    NUMBER = {part C},
     PAGES = {Paper No. 110527, 63},
      shorthand = {DV24b},
}

@article {kv15,
    AUTHOR = {Krogager, Anna Sofie and Vaes, Stefaan},
     TITLE = {A class of {${\rm II}_1$} factors with exactly two group
              measure space decompositions},
   JOURNAL = {J. Math. Pures Appl. (9)},
  FJOURNAL = {Journal de Math\'{e}matiques Pures et Appliqu\'{e}es. Neuvi\`eme S\'{e}rie},
    VOLUME = {108},
      YEAR = {2017},
    NUMBER = {1},
     PAGES = {88--110},
    SHORTHAND = {KV15}
}

@unpublished{cfqt23,
    title = {Rigidity results for von {N}eumann algebras with diffuse center},
    author = {Chifan, Ionu\c{t} and Fernández Quero, Adriana and Tan, Hui},
    note = {2024},
eprint = {2403.01280},
archivePrefix={arXiv},
    shorthand = {CFQT24},
}

@unpublished{hfc,
    author = {Frisch, Joshua and Vahidi Ferdowsi, Pooya},
    title = {Non-virtually nilpotent groups have infinite conjugacy class quotients},
    note = {2018},
    eprint={1803.05064},
    archivePrefix={arXiv},
    shorthand = {FVF18},
}

@article {Sun,
    AUTHOR = {Sun, Bin},
     TITLE = {Cohomology of group theoretic {D}ehn fillings {I}:
              {C}ohen-{L}yndon type theorems},
   JOURNAL = {J. Algebra},
  FJOURNAL = {Journal of Algebra},
    VOLUME = {542},
      YEAR = {2020},
     PAGES = {277--307},
}

@incollection {selberg,
    AUTHOR = {Selberg, Atle},
     TITLE = {On discontinuous groups in higher-dimensional symmetric
              spaces},
 BOOKTITLE = {Contributions to function theory ({I}nternat. {C}olloq.
              {F}unction {T}heory, {B}ombay, 1960)},
     PAGES = {147--164},
 PUBLISHER = {Tata Inst. Fund. Res., Bombay},
      YEAR = {1960},
}

@article {cios3,
    AUTHOR = {Chifan, Ionu\c{t} and Ioana, Adrian and Osin, Denis and Sun,
              Bin},
     TITLE = {Wreath-like products of groups and their von {N}eumann
              algebras {III}: embeddings},
   JOURNAL = {Comm. Math. Phys.},
  FJOURNAL = {Communications in Mathematical Physics},
    VOLUME = {407},
      YEAR = {2026},
    NUMBER = {2},
     PAGES = {Paper No. 38, 24},
     shorthand = {CIOS24}
}

@article {ciossmall,
    AUTHOR = {Chifan, Ionu\c{t} and Ioana, Adrian and Osin, Denis and Sun,
              Bin},
     TITLE = {Small cancellation and outer automorphisms of {K}azhdan groups
              acting on hyperbolic spaces},
   JOURNAL = {Algebr. Geom. Topol.},
  FJOURNAL = {Algebraic \& Geometric Topology},
    VOLUME = {25},
      YEAR = {2025},
    NUMBER = {9},
     PAGES = {5463--5501},
     shorthand = {CIOS23},
}

@article {bowditch,
    AUTHOR = {Bowditch, B. H.},
     TITLE = {Relatively hyperbolic groups},
   JOURNAL = {Internat. J. Algebra Comput.},
  FJOURNAL = {International Journal of Algebra and Computation},
    VOLUME = {22},
      YEAR = {2012},
    NUMBER = {3},
     PAGES = {1250016, 66},
}

@article {Ols93,
    AUTHOR = {Olshanskii, A. Yu.},
     TITLE = {On residualing homomorphisms and {$G$}-subgroups of hyperbolic
              groups},
   JOURNAL = {Internat. J. Algebra Comput.},
  FJOURNAL = {International Journal of Algebra and Computation},
    VOLUME = {3},
      YEAR = {1993},
    NUMBER = {4},
     PAGES = {365--409},
}

@article {BG95,
    AUTHOR = {Bogopolskii, O. V. and Gerasimov, V. N.},
     TITLE = {Finite subgroups of hyperbolic groups},
   JOURNAL = {Algebra i Logika},
  FJOURNAL = {Sibirskii Fond Algebry i Logiki. Algebra i Logika},
    VOLUME = {34},
      YEAR = {1995},
    NUMBER = {6},
     PAGES = {619--622, 728},
}

@article {Osin07,
    AUTHOR = {Osin, Denis V.},
     TITLE = {Peripheral fillings of relatively hyperbolic groups},
   JOURNAL = {Invent. Math.},
  FJOURNAL = {Inventiones Mathematicae},
    VOLUME = {167},
      YEAR = {2007},
    NUMBER = {2},
     PAGES = {295--326},
}

@article {DGO11,
    AUTHOR = {Dahmani, F. and Guirardel, V. and Osin, D.},
     TITLE = {Hyperbolically embedded subgroups and rotating families in
              groups acting on hyperbolic spaces},
   JOURNAL = {Mem. Amer. Math. Soc.},
  FJOURNAL = {Memoirs of the American Mathematical Society},
    VOLUME = {245},
      YEAR = {2017},
    NUMBER = {1156},
     PAGES = {v+152},
shorthand = {DGO11},
}

@article {DG18,
    AUTHOR = {Dahmani, Fran\c{c}ois and Guirardel, Vincent},
     TITLE = {Recognizing a relatively hyperbolic group by its {D}ehn
              fillings},
   JOURNAL = {Duke Math. J.},
  FJOURNAL = {Duke Mathematical Journal},
    VOLUME = {167},
      YEAR = {2018},
    NUMBER = {12},
     PAGES = {2189--2241},
}

@article {Ershov,
    AUTHOR = {Ershov, Mikhail},
     TITLE = {Kazhdan groups whose {FC}-radical is not virtually abelian},
   JOURNAL = {J. Comb. Algebra},
  FJOURNAL = {Journal of Combinatorial Algebra},
    VOLUME = {1},
      YEAR = {2017},
    NUMBER = {1},
     PAGES = {59--62},
}

\end{document}